\def\@tocline#1#2#3#4#5#6#7{\relax
  \ifnum #1>\c@tocdepth 
  \else
    \par \addpenalty\@secpenalty\addvspace{#2}%
    \begingroup \hyphenpenalty\@M
    \@ifempty{#4}{%
      \@tempdima\csname r@tocindent\number#1\endcsname\relax
    }{%
      \@tempdima#4\relax
    }%
    \parindent\z@ \leftskip#3\relax \advance\leftskip\@tempdima\relax
    \rightskip\@pnumwidth plus4em \parfillskip-\@pnumwidth
    #5\leavevmode\hskip-\@tempdima
      \ifcase #1
       \or\or \hskip 1em \or \hskip 2em \else \hskip 3em \fi%
      #6\nobreak\relax
    \hfill\hbox to\@pnumwidth{\@tocpagenum{#7}}\par
    \nobreak
    \endgroup
  \fi}
\definecolor{shadecolor}{rgb}{1,0.9,0.9}
\definecolor{shadecolor}{cmyk}{0.05, 0.22,0,0}
\newenvironment{pink}{\begin{shaded*}}{ \end{shaded*} }
\pgfplotsset{compat=1.18}
\newcommand{\R}{{\mathbb R}}
\newcommand{\N}{{\mathbb N}}
\newcommand{\C}{{\mathbb C}}
\newcommand{\Z}{{\mathbb Z}}
\newcommand{\I}{{\mathbb I}}
\newcommand{\Oo}{\mathcal{O}}
\newcommand{\mb}[1]{\mathbf{#1}}
\newcommand{\mc}[1]{\mathcal{#1}}
\newcommand{\mf}[1]{\mathfrak{#1}}
\newcommand{\rhy}   {\textnormal{RHP}-${\mathbf Y_n}$}
\newcommand{\rhs}   {\textnormal{RHP}-${\mathbf S_n}$}
\newcommand{\rht}   {\textnormal{RHP}-${\mathbf T_n}$}
\newcommand{\rhn}   {\textnormal{RHP}-${\mathbf N_n}$}
\newcommand{\rhr}   {\textnormal{RHP}-${\mathbf R_n}$}
\newcommand{\rhp}   {\textnormal{RHP}-${\mathbf P_{e, n}}$}
\newcommand{\p}[1]{\textnormal{P}_{\uppercase\expandafter{\romannumeral#1\relax}}}
\newcommand{\psym}[1]{\textnormal{P}^{\textnormal{sym}}_{\uppercase\expandafter{\romannumeral#1\relax}}}
\newcommand{\ii}{{\mathrm i}}
\newcommand{\dd}{{\mathrm d}}
\newcommand{\ee}{{\mathrm e}}
\newcommand{\Ga}{\Gamma}
\newcommand{\RS}{\mathfrak S}
\newcommand{\RSab}{\RS_{\ualpha, \ubeta}}
\newcommand{\z}{{\boldsymbol z}}
\newcommand{\x}{{\boldsymbol x}}
\renewcommand{\a}{{\boldsymbol a}}
\renewcommand{\b}{{\boldsymbol b}}
\newcommand{\ualpha}{{\boldsymbol \alpha}}
\newcommand{\ubeta}{{\boldsymbol \beta}}
\newcommand{\hd}{\mathcal{H}}
\newcommand{\A}{{\mathcal A}}
\newcommand{\B}{{\mathrm B}}
\newcommand{\jac}{{\mathrm{Jac}}}
\def\XXint#1#2#3{{\setbox0=\hbox{$#1{#2#3}{\int}$}
    \vcenter{\hbox{$#2#3$}}\kern-.5\wd0}}
\renewcommand{\deg}{\text{deg}}
\renewcommand{\arg}{\text{arg}}
\renewcommand{\epsilon}{\varepsilon}
\renewcommand{\subset}{\subseteq}
\newcommand{\qandq}{\quad \text{and} \quad}
\newcommand{\qasq}{\quad \text{as} \quad }
\newcommand{\qforq}{\quad \text{for} \quad }
\newcommand{\qorq}{\quad \text{or} \quad }
\newtheorem{lemma}{Lemma}[section]
\newtheorem{theorem}[lemma]{Theorem}
\newtheorem{proposition}[lemma]{Proposition}
\newtheorem{corollary}[lemma]{Corollary}
\theoremstyle{definition}
\newtheorem{definition}[lemma]{Definition}
\newtheorem{example}{Example}
\newtheorem{remark}[lemma]{Remark}
\newtheorem{notation}{Notation}
\numberwithin{equation}{section}
\title[Asymptotics of polynomials orthogonal with respect to a generalized Freud weight]{\bf Asymptotics of polynomials orthogonal with respect to a generalized Freud weight with application to special function solutions of Painlev\'e-IV }
\author{Ahmad Barhoumi}
\email{ahmadba@kth.se}
\address{Department of Mathematics, Royal Institute of Technology (KTH), Lindstedtsv\"agen 25, 10044, Stockholm, Sweden}
\thanks{The author was supported by the European Research Council (ERC) Grant Agreement No. 101002013}
\date{\today}
\subjclass[2020]{Primary: 34M55, 33C47; Secondary: 15B52, 30E15, 34E05,34M50. }
\keywords{Painlev\'e-IV, Riemann-Hilbert Analysis, Non-Hermitian Orthogonal Polynomials.}
\begin{document}
\setcounter{tocdepth}{1}

\begin{abstract}
    We obtain asymptotics of polynomials satisfying the orthogonality relations
    \[
    \int_{\R} z^k P_n(z; t , N) \ee^{-N \left(\frac{1}{4}z^4 + \frac{t}{2}z^2 \right)} \dd z = 0 \qforq k = 0, 1, ..., n-1,
    \]
    where the complex parameter $t$ is in the so-called two-cut region. As an application, we deduce asymptotic formulas for certain families of solutions of Painlev\'e-IV which are indexed by a non-negative integer and can be written in terms of parabolic cylinder functions. The proofs are based on the characterization of orthogonal polynomials in terms of a Riemann–Hilbert problem and the Deift-Zhou non-linear steepest descent method.
\end{abstract}
\maketitle 
\tableofcontents
\section{Introduction}
\label{sec:intro}
Let $\{P_n(z;t, N)\}_{n = 0}^\infty$ be a family of monic polynomials satisfying the orthogonality relations
\begin{equation}
\label{eq:ortho}
    \int_{\R} z^k P_n(z; t , N) \ee^{-NV(z; t)} \dd z = 0 \qforq k = 0, 1, ..., n-1,
\end{equation}
where $N \geq 0$, $t \in \C$, and
\begin{equation}
    V(z;t) := \dfrac{1}{4}z^4 + \dfrac{t}{2}z^2.
    \label{potential}
\end{equation} 
Polynomials $P_n(z; t, N)$ satisfy a three-term recurrence relation which, since \eqref{potential} is even, takes the form 
\begin{equation}
\label{eq:three-term}
    zP_n(z; t, N) = P_{n+1}(z; t, N) + \gamma_n^2(t,N) P_{n-1}(z;t, N).
\end{equation}
In this work, we obtain large-degree asymptotics of $P_n(z; t, N)$ and $\gamma_n(t, N)$ in the double-scaling regime where $n, N \to \infty$ such that $|n - N|$ is bounded and $t$ is in the so-called two-cut region, which we now explain. It is well understood that the large $n$ behavior of $P_n(z;t,N), \gamma_n^2(t, N)$ depends on the choice of the complex parameter $t$, and particularly on the geometry of the attracting set for the zeros of $P_n(z;t, N)$. More precisely, it follows from \cite{MR3306308}*{Theorem 2.3} and \cite{MR922628}*{Theorem 3} that the normalized counting measures of zeros of $P_n(z; t, N)$ weakly converge to a measure $\mu_t$ supported on a finite union of at most three analytic arcs. The regions labeled $\Oo_{1}, \Oo_{2},$ and $\Oo_{3}$ in Figure \ref{fig:phase} correspond to parameters $t$ where the measure $\mu_t$ is supported on one, two, or three analytic arcs, respectively. Remarkably, Figure \ref{fig:phase} appeared in the physical literature in \cite{MR1083917}*{Fig. 5}. It was later obtained computationally (in different coordinates) in \cite{BT}, and a rigorous description followed in \cite{MR3589917}. In both works, the authors considered a more general contour of integration than in \eqref{eq:ortho}. An explicit description of the boundaries of $\Oo_j$ tailored to our setting was given in \cite{BGM}. A precise definition of $\Oo_j$ is given in Section \ref{sec:asymptotic-analysis-prelim}, where we also state a specialization of \cite{MR3306308}*{Theorem 2.3} for the reader's convenience. 
\begin{figure}[t]
    \centering
    \includegraphics[width = 0.5 \textwidth]{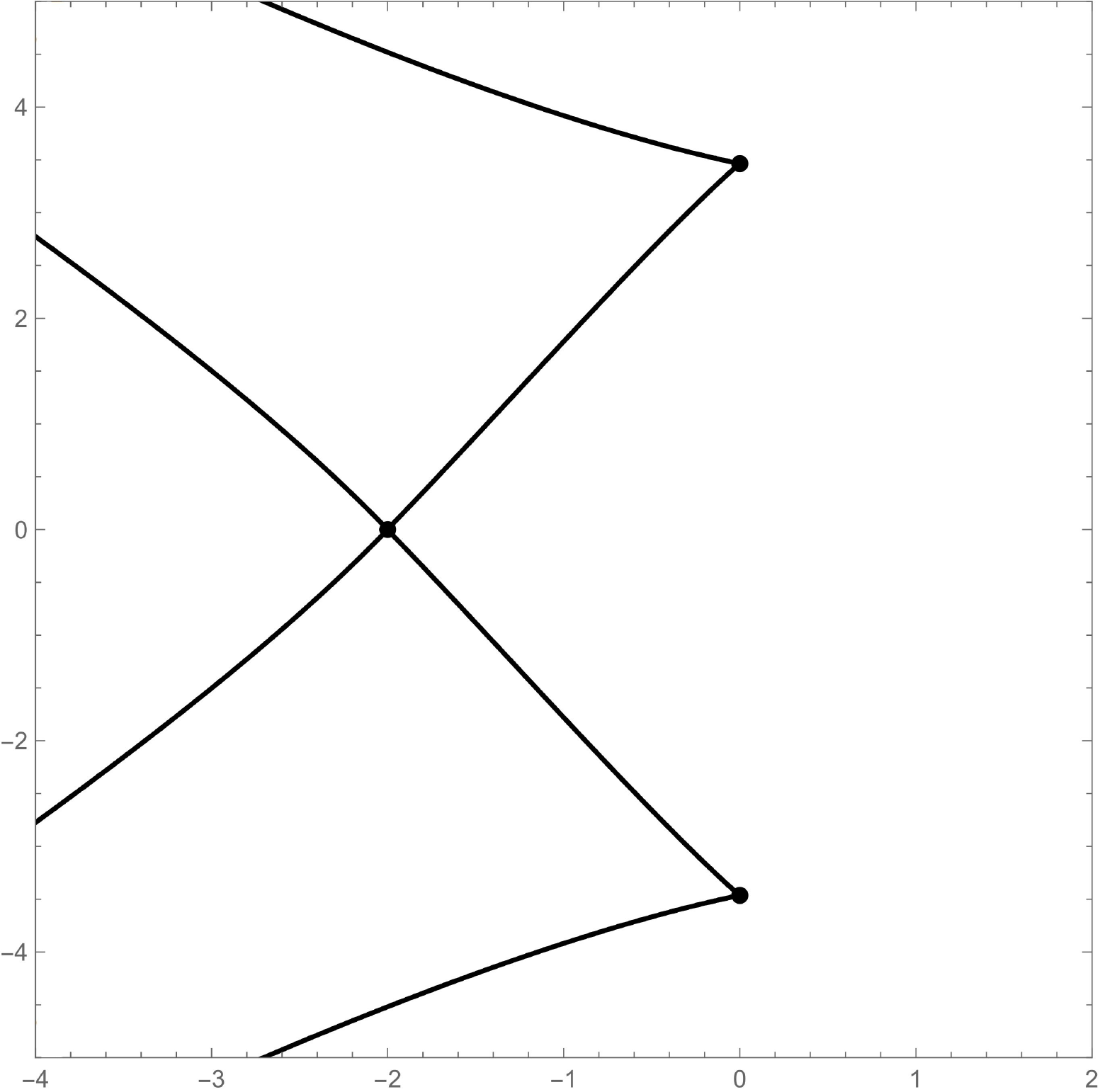}
    \put(-50,125){$\Oo_1$}
    \put(-225,125){$\Oo_2$}
    \put(-110,206){$\Oo_3$}
    \put(-85,218){$\ii \sqrt{12}$}
    \put(-85,35){$-\ii \sqrt{12}$}
    \put(-110,47){$\Oo_3$}
    \caption{The boundary of the open sets $\Oo_1, \Oo_2, \Oo_3$.} 
    \label{fig:phase}
\end{figure}

The phase diagram in Figure \ref{fig:phase} contains three points that are of particular interest in the mathematical physics literature because of the presence of ``critical phenomena." At the level of polynomials $P_n(z; t, N)$, this manifests itself in non-generic asymptotic formulas involving Painlev\'e transcendents. Asymptotic formulas in the double-scaling regime $t \to -2$ were obtained in \cite{MR1949138} (see also \cite{MR2559966}) and involve particular solutions of Painlev\'e-II. An analogous result, but involving solutions to Painlev\'e-I, when $t \to  \pm \ii \sqrt{12}$ was obtained in \cite{DK}. Similar analysis was done in \cite{BT} for more general choices of contours. As for the rest of the phase diagram, asymptotics of $\gamma_n^2(t, N)$ were obtained for $t > -2$ in \cite{BI}, for $t<-2$ in \cite{MR1715324}, and for $t \in \Oo_1$ in \cite{BGM}. Leading term asymptotics for the recurrence coefficients were stated for $t \in \Oo_2 \cup \Oo_3$ in \cite{MR3589917} (for a different set of parameters and more general contours) under the assumption that the $g$-function (see Section \ref{sec:g-fun}) satisfies certain inequalities and that certain theta functions are non-vanishing (cf. \cite{MR3589917}*{Theorem 2.19}). The asymptotic analysis for a related family of orthogonal polynomials, which we denote $L_n(x; t, \lambda)$, and which satisfy 
    \begin{equation}
    \label{eq:laguerre-def}
    \int_0^\infty x^k L_n(x; t, \lambda) x^\lambda \ee^{-\left( x^2 + t x\right)} \dd x = 0 \qforq k = 0, 1, ..., n - 1,
    \end{equation}
where $\lambda > -1$, was carried out in \cite{MR3370612} in the regime $t = N^{1/2}c$, $c < -2$, and $-\frac12 \leq \lambda \leq \frac12$. This was followed by \cite{MR3795283}, where the asymptotics of the orthogonal polynomials in the same regime were given for all $c \in \R$.

\subsection{Connection to Painlev\'e-IV} Our interest in these polynomials stems from their intimate connections with \emph{special function solutions} of Painlev\'e-IV. While generic solutions of Painlev\'e equations are highly transcendental, all but Painlev\'e-I possess special solutions written in terms of elementary and/or classical special functions. We will focus our attention on the fourth Painlev\'e equation, given by
\begin{equation}
    \dod[2]{u}{x} = \dfrac{1}{2u} \left( \dod{u}{x} \right)^2 + \dfrac32 u^3 + 4xu^2 + 2(x^2 + 1 - 2\Theta_\infty) u - \dfrac{ 8 \Theta_0^2}{u}, \quad\Theta_0, \Theta_\infty \in \C. \tag{$\p{4}$}
    \label{eq:p4-ab}
\end{equation}  
 It is well-known that, for particular choices of parameters, Painlev\'e-IV possesses both rational solutions and solutions written in terms of parabolic cylinder functions, see e.g. \cites{BCH,GLS, MR0896355,MR0217352,O, MR3729446}. The latter occur whenever any of following conditions hold:
\begin{equation*}
    \Theta_0 \pm \Theta_\infty = 0 \qorq \Theta_0 \pm \Theta_\infty = 1.
\label{eq:p4-parameter-conds}
\end{equation*}
Starting with one such solution as a ``seed" function and iterating B\"acklund transformations produces families of special function solutions of Painlev\'e-IV. As an application of our results, we show that a particular family of special solutions of Painlev\'e-IV have asymptotically pole-free regions in the complex plane. This is done by realizing poles of solutions of Painlev\'e-IV as zeros of the Hankel determinants associated with the polynomials $P_n(z;t, N)$:
\begin{equation}
H_{n - 1} (t, N) := \det \left[ \mu_{i+j}(t, N) \right]_{i, j = 0}^{n - 1}, \quad  \mu_k(t, N) := \int_\R s^{k} \ee^{-NV(s;t)} \dd s.
\label{eq:hankel-det-def}
\end{equation}
 
The strategy employed in this work for proving the existence of asymptotically pole-free regions of special function solutions of Painlev\'e equations is not new. Some of the early implementations of this strategy were to study rational solutions of Painlev\'e-II in \cite{MR3431594} and the Painlev\'e-II hierarchy in \cite{MR3562403}. Later, this was applied to Airy solutions of Painlev\'e-II in \cites{BBDY, BBDY2}. A different approach which has been extremely successful in studying the asymptotics and pole distribution of special function solutions of Painlev\'e equations is the recasting of the equations as conditions of isomonodromic deformation for a related $2 \times 2$ system of differential equations. Indeed, this method was applied to study rational solutions of Painlev\'e-II in \cites{MR3265723, MR3350600}, rational solutions of Painlev\'e-III in \cites{MR3879971,MR4046831}, and rational solutions of Painlev\'e-IV in \cites{BM, MR4153120}. Many of the studies above were motivated by the intricate behavior of the poles of special solutions of Painlev\'e equations which had been observed for, say, Painlev\'e-II and Painlev\'e-IV in e.g. \cites{MR2291140, MR1975781, MR2538285, MR2804960, MR3529955}.

The relationship between $P_n(z; t, N)$, $\gamma_n^2(t, N)$, and solutions to Painlev\'e equations is well-documented in the literature. In \cite{Shohat}, Shohat derived differential equations for polynomials orthogonal with respect to semi-classical weights which include the weight in \eqref{eq:ortho}. Furthermore, he showed that $\gamma_n^2(t, N)$ satisfy discrete version of Painlev\'e-I:
\begin{equation}
\label{string-eq}
\gamma_{n}^2(t, N) \left(t + \gamma_{n-1}^2(t, N)  + \gamma_n^2(t, N)  + \gamma_{n+1}^2(t, N)  \right) = \dfrac{n}{N}.
\end{equation}
Equation \eqref{string-eq} reappeared in \cite{FIK}, where it was observed that the recurrence coefficients can be written in terms of parabolic cylinder functions. Magnus revisited the semi-classical weights and re-derived \eqref{string-eq} in \cites{Magnus, Magnus2}. Polynomials $L_n(x; t, \lambda)$ satisfying \eqref{eq:laguerre-def} were considered in \cites{BV,FVZ,CJ}, where it was shown that the corresponding recurrence coefficients are given in terms of Wronskians of parabolic cylinder functions and are thus related to special function solutions of Painlev\'e-IV. One can immediately see that a change-of-variables symmetrizes the weight of orthogonality in \eqref{eq:laguerre-def} and maps $L_n(z; t, \lambda)$ to a family of orthogonal polynomials $S_n(x; t, \lambda )$ satisfying 
    \begin{equation}
    \int_\R x^k S_n(x; t, \lambda ) |x|^{2\lambda + 1} \ee^{-(x^4 + tx^2)} \dd x = 0 \qforq k = 0, 1, ..., n-1, \quad \lambda > -1,
        \label{eq:gen-freud}
    \end{equation}
and contains $P_n(z; t, N)$ as a special case. Indeed, a re-scaling of variables in \eqref{eq:gen-freud} gives 
\begin{equation}
    P_n(x; t, N) = \left(2^{-\frac12 } N^{\frac14 }\right)^{-\deg S_n} S_n \left( 2^{-\frac12}N^{\frac14} x; -N^\frac12 t, -\frac12 \right).
    \label{eq:freud-connection}
\end{equation}
The connection between $S_n(x; t, \lambda)$ and Painlev\'e-IV was worked out and various properties of the polynomials $S_n(x; t, \lambda)$ were proved in \cites{MR3494152, MR3733254}. We note that the weight of orthogonality in \eqref{eq:ortho} can also be interpreted as a Freud weight (when $t\in \R_+$) for which there is a massive literature; see \cites{MR0419895, MR0862231}, \cite{DLMF}*{Section 18.32}, and references therein.

\subsection{Quartic matrix model} Another important application of the polynomials $P_n(x; t, N)$ is to the quartic one-matrix model, which is defined by the distribution 
    \begin{equation}
    \frac{1}{{\mc Z}_{n}(u, N)} \ee^{-N\mathrm{Tr}\left( \frac12 \mb M^2 + \frac{u}{4} \mb M^4\right)} \dd \mb M, 
    \label{eq:rmt-dist}
    \end{equation}
where $\mb M$ is an $n \times n$ Hermitian matrix, $u>0$, and $\mc Z_n(u, N)$ is the associated partition function. The quartic model was first investigated in \cites{BIPZ, BIZ}, where it was observed that when $n = N$, the associated free energy admits an asymptotic expansion in inverse powers of $N^{-2}$ and that the coefficient of $N^{-2g}$, as functions of $u$, serve as generating functions for the number of four-valent graphs on a Riemann surface of genus $g$. This expansion is known as the \emph{topological expansion}, and its existence and interpretation of its coefficients was proven in \cite{MR1953782} for one-cut, even polynomial potentials satisfying a ``small-coefficients" assumption, see \cite{MR1953782}*{Equation (1.3) and Theorem 1.1}. One-cut, even polynomial potentials were revisited in \cite{BI} and the topological expansion was proven using different methods for potentials which remain in the one-cut regime after certain deformations. In fact, an asymptotics expansion of the free energy in the boundary case $t = -2$ was also obtained in \cite{BI}. The asymptotics of the partition function in the two-cut region were obtained in \cite{MR3370612}.

\begin{remark}
An analogous model, known as the cubic model, was also investigated in \cite{BIPZ} and is connected with the enumeration of 3-valent graphs on Riemann surfaces. This corresponds to replacing the exponent in \eqref{eq:rmt-dist} by $\mathcal{V}(\mb M) =  \frac12 \mb M^2 + \frac{u}{3} \mb M^3$ , and results in the replacement of the potential $V(z)$ by the polynomial $\tilde{V}(z) = z^3/3 + tz^2/2$. The reader might immediately object since the integrals in \eqref{eq:ortho} and the total mass of \eqref{eq:rmt-dist} are not convergent. To make sense of the model, one needs to deform the contour of integration to the complex plane, at which point the model loses its interpretation as an ensemble of random Hermitian matrices. However, the enumeration property still holds as was shown in \cite{BD}. Later, the connection between the partition function and solutions of Painlev\'e-II was established in \cite{BBDY} and further expanded on in \cite{BBDY2}. In many ways, this manuscript can be viewed as the analog of \cite{BBDY} for the quartic model. 
\end{remark}

In this contribution, we content ourselves with the asymptotic analysis of $P_n(z; t, N)$, quantities related to these polynomials, and the corresponding special function solutions of Painlev\'e-IV. That being said, we do point out that the partition function and (the derivative of) the free energy can be written in terms of $\tau-$ and $\sigma$-functions of Painlev\'e-IV, see Remark \ref{remark:partition-tau}. 

\subsection{Overview} 

We recall the definition of the equilibrium measure and $S$-curves, state relevant results on these from the literature, and introduce functions which are important to our analysis in Section \ref{sec:asymptotic-analysis-prelim}. In Section \ref{sec:main2}, we state our main results: leading term asymptotics for the orthogonal polynomials, recurrence coefficients, and normalizing constants when $t \in \Oo_2$, see Theorem \ref{thm:asymptotics-poly} and Theorem \ref{thm:asymptotics-gamma-h}. In Section 4, we define and, as a consequence of our main results, deduce leading term asymptotics for certain parabolic cylinder solutions of Painlev\'e-IV (cf. Corollary \ref{cor:asymp-p4}). We collect various useful facts about the leading term of the asymptotic expansion of $P_n(z;t, N)$ in Section \ref{sec:g-szego}. Section \ref{sec:rh} is devoted to the implementation of the non-linear steepest descent analysis to prove the main results. In Appendix \ref{appendix-a}, we record an expression for the Hankel determinants $H_n(t, N)$ as a product of $\tau$-functions for Painlev\'e-IV and use it to prove a statement about the degrees of $P_n(t, N)$. 


\section{Preliminaries to the asymptotic analysis of \texorpdfstring{$P_n(z;t, N), \gamma^2_n(t, N)$}{the orthogonal polynomials}}
\label{sec:asymptotic-analysis-prelim}

\subsection{More on polynomials $P_n(z;t, N)$}

Computing the polynomial $P_n(z;t, N)$ amounts to solving an under-determined, homogeneous system of equations and so it always exists, but may have degree smaller than $n$. Henceforth, for any $N \geq 0, t \in \C,$ and $n \in \Z_{\geq 0}$, we denote by $P_n(z;t, N)$ the \emph{unique monic polynomial of degree $n$ satisfying \eqref{eq:ortho}, if it exists}. It holds that $P_n(z;t, N)$ exists if and only if $H_{n-1}(t, N) \neq 0$, with $H_{n-1}(t, N)$ as in \eqref{eq:hankel-det-def}. 
\begin{remark}
    A consequence of our analysis will be that when $t \in \Oo_2$ and for all $n$ large enough, it holds that $ P_n(z; t, N)$ exists. The same is known to be true for $\Oo_1$, see e.g. \cite{BGM}. When $t \in \Oo_3$, degeneration can and will occur since $H_n(t, N)$ has zeros there, see Figure \ref{fig:polest} for example. We remark that the way these degeneration occur is highly structured, see Proposition \ref{prop:deg}.
\end{remark}

Given the possibility of degeneration, we first make sense of \eqref{eq:three-term}. Observe that by \cite{DLMF}*{Equation 12.5.1}, the evenness of $V(z; t)$, and the change-of-variables $s^2 = 2^{1/2} N^{-1/2} x$, we have 
\begin{equation}
        \mu_0(t, N) =  {2^{\frac14}{\pi}^{\frac12}}{N^{-\frac14}} \ee^{\frac{N}{8}t^2} D_{-\frac12} \left( {N^{\frac12}}{2^{-\frac12}}t \right),
    \label{eq:moment-0}
\end{equation}
where $D_\nu(\diamond)$ is the parabolic cylinder functions (cf. \cite{DLMF}*{Section 12}). Furthermore, for all $k \in \N$ the following identity holds:
\begin{equation}
    \dod[k]{\mu_0}{t}(t, N) = (-1)^k {N^k}{2^{-k}}\int_\R s^{2k} \ee^{-NV(s;t)} \dd s = (-1)^k {N^k}{2^{-k}} \mu_{2k}(t, N).
    \label{eq:moment-derivative}
\end{equation}
Combining \eqref{eq:moment-0}, \eqref{eq:moment-derivative}, we arrive at 
\begin{equation}
    \mu_{2k}(t, N) = 
       (-1)^{k}{2^{k + \frac14}}{N^{-(k +\frac14)}}\pi^{1/2} \dod[{k}]{}{t} \left(  \ee^{\frac{N}{8}t^2} D_{-\frac12}\left({N^{\frac12}}{2^{-\frac12}} t\right) \right).
       \label{eq:even-moments}
\end{equation}
Since odd moments vanish by evenness of $V(z; t)$, it follows that for every $n \in \N$, $H_{n - 1}(t, N)$ is an entire function of $t$. From the classical formulas for the normalizing constants
\begin{equation}
\label{eq:normalizing}
    h_n(t, N) := \int_\R P_n^2(z; t, N) \ee^{-NV(z; t)} \dd z =  \dfrac{H_n(t, N)}{H_{n-1}(t, N)},
\end{equation}
and the recurrence coefficients
\begin{equation}
\label{eq:gamma}
    \gamma_n^2(t, N) = \dfrac{ h_n(t, N)}{h_{n-1}(t, N)},
\end{equation}
it follows that $\gamma^2_{n}(t, N), h_n(t, N)$ are meromorphic functions of $t$. The same is true of $P_n(z; t, N)$ since for any $t$ such that $H_{n-1}(t, N) \neq 0$ we have
\begin{equation*}
    P_n(z; t, N) = \dfrac{1}{H_{n - 1}(t, N)}\det \begin{bmatrix}
\mu_{0}(t, N) & \mu_{1}(t, N) &  \cdots & \mu_{n}(t,N) \\
\mu_{1}(t, N) & \mu_{2}(t, N) & \cdots & \mu_{n+1}(t, N) \\
\vdots & \vdots &  \ddots & \vdots \\
\mu_{n-1}(t, N) & \mu_{n}(t, N) &  \cdots & \mu_{2n-1}(t, N) \\
1 & z &  \cdots & z^n
\end{bmatrix}.
\end{equation*}
We can now interpret \eqref{eq:three-term}: the roots of $\prod_{j = 0}^{n+1} H_j(t, N)$ form a discrete, countable subset of $\C$, outside of which \eqref{eq:three-term} holds and is then meromorphically continued to all $t \in \C$. Next, we recall notions from potential theory which will be necessary for the subsequent asymptotic analysis.

\subsection{Potential theory and S-curves}
It is well understood that the zeros of polynomials satisfying \eqref{eq:ortho} asymptotically distribute as a certain \emph{weighted equilibrium measure} on an \emph{S-contour} corresponding to the weight function \eqref{potential}. Let us start with some definitions.
\begin{definition} 
\label{def:eq}
Let $V$ be an entire function. The logarithmic energy in the external field $\Re V$ of a measure $\nu$ in the complex plane
is defined as
\[
E_V(\nu)=\iint \log \frac{1}{|s-t|}\mathrm d\nu(s) \mathrm d\nu(t)+\int \Re V(s)\mathrm d\nu(s).
\]
The equilibrium energy of a contour $\Ga$ in the external field $\Re V$ is equal to
\begin{equation}
\label{em1}
\mathcal E_V(\Ga)=\inf_{\nu\in \mathcal M(\Ga)} E_V(\nu),
\end{equation}
where $\mathcal M(\Ga)$ denotes the space of Borel probability measures on $\Ga$.
\end{definition}

When $\Re V(s)-\log|s|\to+\infty$ as $\Ga\ni s\to\infty$, there exists a unique minimizing measure for \eqref{em1}, which is called the {\it weighted equilibrium measure} of $\Ga$ in the external field $\Re V$, say $\mu_\Ga$, see \cite{MR1485778}*{Theorem I.1.3}. The support of $\mu_\Ga$, say $J_\Ga$, is a compact subset of $\Ga$. The equilibrium measure is characterized by the Euler--Lagrange variational conditions:
\begin{equation}
\label{em2}
U^{\mu_\Gamma}(z)+\dfrac{1}{2} \Re V(z)\;
\left\{
\begin{aligned}
&= \ell_\Ga,\qquad z\in J_\Ga,\\
&\ge \ell_\Ga,\qquad z\in \Ga\setminus J_\Ga,
\end{aligned}
\right.
\end{equation}
where $\ell_\Ga$ is a constant, the Lagrange multiplier, and \( U^\mu(z):=-\int\log|z-s|\dd\mu(s) \) is the logarithmic potential of $\mu$, see \cite{MR1485778}*{Theorem~I.3.3}. 

Due to the analyticity of the integrand in \eqref{eq:ortho}, the contour of integration in the same equation can be deformed without changing the orthogonality condition. Henceforth, we suppose that the polynomials $\{P_n(z;t, N)\}_{n = 0}^\infty$ are defined with $\Ga \in\mathcal T$ where \( \mathcal T \) is the following class of contours. 
\begin{definition} 
Let $\mathcal T$ be the set of all piece-wise smooth contours that extend to infinity in both directions and admit a parametrization $z(s)$, $s\in\R$, for which there exists $\epsilon\in(0,\pi/8)$ and $s_0>0$ such that
\begin{equation*}
\label{cm2b}
\left\{
\begin{array}{ll}
|\arg(z(s))|\leq \epsilon, & s\geq s_0, \medskip \\
|\arg(z(s))-\pi|\leq \epsilon, & s\leq -s_0,
\end{array}
\right., \quad \arg(z(s))\in[0,2\pi).
\end{equation*}
\end{definition}

\sloppy Following the works of Stahl \cites{stahl-domains, stahl-structure,stahl-complexortho} and Gonchar and Rakhmanov \cite{MR922628}, it is now well-understood that among the contours in $\mc T$, the zeros of the orthogonal polynomials will accumulate on the contour whose equilibrium measure has the \emph{S-property} in the external field $\Re V$.

\begin{definition} 
The support $J_\Ga$ has the S-property in the external field $\Re V$, if it consists of a finite number of open analytic arcs and their endpoints, and  on each arc it holds that
\begin{equation}
\label{em4}
\frac{\partial }{\partial n_+}\,\big(2U^{\mu_\Gamma}+\Re V\big)=
\frac{\partial }{\partial n_-}\,\big(2U^{\mu_\Gamma}+\Re V\big),
\end{equation}
where $\frac{\partial }{\partial n_+}$ and  $\frac{\partial }{\partial n_-}$ are the normal derivatives from the $(+)$- and $(-)$-side of $\Ga$. We shall say that a curve $\Ga\in\mathcal T$ is an S-curve in the field $\Re V$ if $J_\Ga$ has the S-property in this field.
\end{definition}

It is known that $J_\Ga$ is comprised of \emph{critical trajectories} of a certain quadratic differential. Recall that if $Q$ is a meromorphic function, a \emph{trajectory} (resp. \emph{orthogonal trajectory}) of a quadratic differential $-Q(z)\mathrm dz^2$ is a maximal\footnote{A smooth arc satisfying \eqref{eq:qd-cond} is maximal if it is not properly contained in any other smooth arc satisfying \eqref{eq:qd-cond}.} smooth arc on which
\begin{equation}
-Q(z(s))\big(z^\prime(s)\big)^2>0 \quad \big(\text{resp.} \quad -Q(z(s))\big(z^\prime(s)\big)^2<0\big)
    \label{eq:qd-cond}
\end{equation}
for any local uniformizing parameter. A trajectory is called \emph{critical} if it is incident with a \emph{finite critical point} (a zero or a simple pole of $-Q(z)\mathrm dz^2$) and it is called \emph{short} if it is incident only with finite critical points. We designate the expression \emph{critical (orthogonal) graph of $-Q(z)\mathrm dz^2$} for the totality of the critical (orthogonal) trajectories $-Q(z)\mathrm dz^2$.

In our setting, Kuijlaars and Silva \cite{MR3306308}*{Theorems~2.3 and~2.4} already identified some important features of the function $Q$, which we summarize below for $V(z;t)$ as in \eqref{potential}.

\begin{theorem}[\cite{MR3306308}]
\label{KS-thm}
Let $V(z;t)$ be given by \eqref{potential}. Then,
\label{fundamental} 
\begin{enumerate}[label=(\alph*)]
  \item there exists a contour $\Ga_t\in\mathcal T$ such that
\begin{equation}
\label{em3}
\mathcal E_V(\Ga_t)=\sup_{\Ga\in\mathcal T} \mathcal E_V(\Ga).
\end{equation}
  \item  The support $J_t$ of the equilibrium measure $\mu_t:=\mu_{\Ga_t}$ has the S-property in the external field $\Re V(z;t)$ and the measure \( \mu_t \) is uniquely determined by \eqref{em4}. Therefore, \( \mu_t \) and its support \( J_t:= J_{\Ga_t} \) are the same for every $\Ga_t$ satisfying \eqref{em3}.
  \item The function
\begin{equation}
\label{em5}
Q(z;t)=\left(\frac{V'(z;t)}{2}- \int \frac{\mathrm d\mu_t(x)}{z-x}\right)^2,\quad z\in \C\setminus J_t,
\end{equation}
is a polynomial of degree 6.
\item The support $J_t$ consists of some short critical trajectories of the quadratic differential $-Q(z;t)\mathrm dz^2$ and the equation
\begin{equation}
\label{em6}
\mathrm d\mu_t(z)=\frac{1}{\pi \ii}\,Q_+^{1/2}(z;t)\mathrm dz, \quad z\in J_t,
\end{equation}
holds on each critical trajectory, where $Q^{1/2}(z;t)$ is analytic in $\C \setminus J_t$ and satisfying $Q^{1/2}(z;t)=\frac12z^3+\mathcal{O}(z)$ as $z\to\infty$ (in what follows, \( Q^{1/2}(z;t) \) will always stand for this branch).
\end{enumerate}  
\end{theorem}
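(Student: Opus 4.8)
The theorem is \cite{MR3306308}*{Theorems~2.3 and~2.4}, and I would follow the Kuijlaars--Silva line of argument. The backbone is the max-energy problem of part~(a): one must show that $\Ga\mapsto\mathcal E_V(\Ga)$ attains its supremum over the non-compact class $\mathcal T$. First I would verify that $\mathcal E_V$ is bounded above on $\mathcal T$ --- by comparing with the equilibrium energy of a fixed reference contour and invoking the growth $\Re V(z;t)-\log|z|\to+\infty$ along contours in $\mathcal T$ --- and that this growth confines the mass of any near-maximal equilibrium measure to a fixed compact set $K$. Then I would take a maximizing sequence $\Ga_k$, bring each $\Ga_k$ into a normalized shape outside a large disk (a deformation that leaves the equilibrium measure and its energy unchanged, since the latter lives in $K$), and extract a subsequential limit: the measures $\mu_{\Ga_k}$ converge weak-$\ast$ to some $\mu_\ast$, their supports converge in the Hausdorff metric, and the sector constraints defining $\mathcal T$ persist in the limit, producing a contour $\Ga_t\in\mathcal T$ carrying $\mu_\ast$. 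A semicontinuity property of $\mathcal E_V$ along the sequence then forces $\mathcal E_V(\Ga_t)=\sup_{\Ga\in\mathcal T}\mathcal E_V(\Ga)$. I expect this compactness-and-semicontinuity step to be the main obstacle, the delicate point being that the logarithmic energy is only lower semicontinuous in the measure, so passing the infimum defining $\mathcal E_V$ through the limit of contours requires care.

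Given a maximizer $\Ga_t$, part~(b) follows from a first-variation computation: perturb $\Ga_t$ by an infinitesimal normal displacement $\epsilon\,h$ supported on a small sub-arc of $J_t$ (the perturbed contour stays in $\mathcal T$, the change being local), push $\mu_t$ forward, and differentiate $\mathcal E_V$ at $\epsilon=0$. Maximality forces this derivative to vanish for every admissible $h$, and unwinding the variational formula yields exactly the balance of normal derivatives \eqref{em4} on each analytic sub-arc of $J_t$. For the uniqueness of $\mu_t$ --- hence of $J_t$ --- among all maximizers, I would argue by comparison: if $\mu_1$ on $\Ga^{(1)}$ and $\mu_2$ on $\Ga^{(2)}$ both attain the supremum, then $U^{\mu_1}-U^{\mu_2}$ is bounded and harmonic off $J^{(1)}\cup J^{(2)}$, the equalities \eqref{em2} together with the S-property \eqref{em4} pin down its boundary behavior on the supports, the maximum principle gives $U^{\mu_1}\equiv U^{\mu_2}$, and unicity of the logarithmic potential gives $\mu_1=\mu_2$.

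For parts~(c) and~(d), I would set
\[
\xi(z;t):=\frac{V'(z;t)}{2}-\int\frac{\dd\mu_t(x)}{z-x},\qquad Q(z;t):=\xi(z;t)^2,\qquad z\in\C\setminus J_t,
\]
so that $Q$ is analytic off $J_t$. Differentiating the Euler--Lagrange equality \eqref{em2} tangentially along $J_t$ gives $\Re\bigl(\xi_\pm(z;t)\,z'(s)\bigr)=0$, and the S-property \eqref{em4} supplies the complementary relation in the normal direction; together they force $\xi_++\xi_-\equiv0$ on the smooth part of $J_t$, hence $Q_+=Q_-$ there, so $Q$ continues analytically across the open arcs of $J_t$. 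At an endpoint, $\xi_+=-\xi_-$ forces $\xi$ to have a square-root branch point, and regularity of the free-boundary problem excludes a pole there, so $\xi=(z-a)^{1/2}\times(\text{analytic})$ locally and $Q=\xi^2$ is analytic --- with a zero of odd order --- at the endpoint as well. Thus $Q(\cdot\,;t)$ is entire; since $\int\frac{\dd\mu_t(x)}{z-x}=z^{-1}+O(z^{-2})$ forces $\xi(z;t)=\tfrac12 z^3+O(z)$ and $Q(z;t)=\tfrac14 z^6+O(z^4)$ as $z\to\infty$, Liouville's theorem makes $Q(\cdot\,;t)$ a polynomial of degree $6$. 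Finally, $\Re(\xi_\pm z')=0$ says $-Q(z;t)\bigl(z'(s)\bigr)^2>0$ along $J_t$, so $J_t$ consists of short critical trajectories of $-Q(z;t)\,\dd z^2$ joining those zeros of $Q$ that serve as endpoints of the support (at most three arcs, matching the regions $\Oo_j$, since the cuts of the single-valued root $Q^{1/2}=\xi$ must pair up the odd-order zeros); and comparing boundary values shows $Q_+^{1/2}$ equals $\pi\ii$ times the density once the branch is fixed by $Q^{1/2}(z;t)=\tfrac12 z^3+O(z)$, which is \eqref{em6}. With the top coefficients of $Q$ thereby determined, locating $J_t$ explicitly then reduces to a finite set of period and mass-normalization conditions.
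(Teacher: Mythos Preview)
The paper does not prove this theorem; it is quoted from Kuijlaars--Silva \cite{MR3306308}*{Theorems~2.3 and~2.4} and stated without proof as background for the subsequent analysis. Your outline follows the Kuijlaars--Silva strategy faithfully (max--min energy problem plus compactness for (a), first variation for the S-property in (b), and the Schwarz-reflection/Liouville argument for the polynomiality of $Q$ in (c)--(d)), so there is nothing in the present paper to compare it against.
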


For a description of the structure of the critical graphs of a general quadratic differential, see e.g. \cites{MR0096806,MR743423}. Since $\deg\, Q=6$, $J_t$ consists of one, two, or three arcs, corresponding to the cases where $Q(z;t)$ has two, four, or six simple zeros, respectively. This depends on the choice of $t$, and the phase diagram is shown in Figure \ref{fig:phase}.

\begin{remark}
    Away from \( J_t \), one has freedom in choosing \( \Ga_t \). Indeed, let $e \in J_t$ be arbitrary and set
    \begin{equation}
    \label{em0}
    \mathcal U(z;t) := \Re \left(2\int_e^z Q^{1/2}(z;t)\dd z \right) = \ell_{\Ga_t} - \Re \ (V(z;t)) - 2U^{\mu_t}(z),
    \end{equation}
    where the second equality follows from \eqref{em5} (since the constant \( \ell_{\Gamma_t} \) in \eqref{em2} is the same for both connected components of \( J_t \) and the integrand is purely imaginary on \( J_t \), the choice of \( e \) is indeed not important). Clearly, \( \mathcal U(z;t) \) is a subharmonic function (harmonic away from \( J_t \)) which is equal to zero on \( J_t \) by \eqref{em2}. The trajectories of \( -Q(z;t)\dd z^2 \) emanating out of the endpoints of \( J_t \) belong to the set \( \{z:\mathcal U(z;t)=0 \} \) and it follows from the variational condition \eqref{em2} that \( \Ga_t\setminus J_t\subset \{z:\mathcal U(z;t)<0\} \). However, within the region \( \{z:\mathcal U(z;t)<0 \} \) the set \( \Ga_t\setminus J_t \) can be varied freely.
    \label{remark:contour-freedom}
\end{remark}
The critical graphs of $-Q(z; t) \dd z^2$ were studied in great detail in \cite{BGM} in all the above cases. The next subsection defines the regions $\Oo_j$ and summarizes results relevant to $t\in \Oo_2$.
\subsection{\texorpdfstring{Critical graph of $-Q(z)\dd z^2$}{Critical graphs}}

To state the next theorem, we will need the following notation 
\begin{notation}
$\Gamma (z_1, z_2)$ (resp. $\Ga[z_1, z_2]$) stands for the trajectory or orthogonal trajectory (resp. the closure of) of the differential \( -Q(z;t)\dd z^2 \) connecting $z_1$ and $z_2$, oriented from $z_1$ to $z_2$, and $\Ga(z,e^{\mathrm i\theta}\infty)$ (resp. $\Ga(e^{\mathrm i\theta}\infty,z)$) stands for the orthogonal trajectory ending at $z$, approaching infinity at the angle $\theta$, and oriented away from $z$ (resp. oriented towards $z$).\footnote{This notation is unambiguous as the corresponding trajectories are unique for polynomial differentials as follows from Teichm\"uller's lemma.}
\label{note:notation}
\end{notation}
\begin{definition}
     The open sets $\Oo_j, j = 1, 2, 3$ are defined by the property that 
    \[
    t\in \Oo_j \implies Q(z;t) \text{ has $2j$ simple zeros}.
    \]
    These sets are shown in Figure \ref{fig:phase}, where $\Oo_j$ is labelled the ``$j$-cut region."
\end{definition}

Let $I_t$ be an analytic arc connecting $-a_2(t), a_2(t)$, oriented towards $a_2(t)$, lies entirely in the region $\{\mathcal U(z; t) <0\}$, and for a fixed neighborhood $U_{\pm a_2}$ of $\pm a_2(t)$, $I_t \cap U_{\pm a_2}$ coincides with orthogonal trajectories emanating from $\pm a_2$.The following theorem summarizes the relevant results from \cite{BGM}*{Section 2.4.2}.

\begin{theorem}[\cite{BGM}] \label{thm:2-cut}
Let $\mu_t$ and $Q(z; t)$ be as in Theorem \ref{KS-thm}.  If $t \in \overline{\mathcal{O}_2}$, then 
\begin{equation}
\label{Q-two-cut}
    Q(z;t) = \dfrac{1}{4} z^2 (z^2 - a_2^2(t))(z^2 - b_2^2(t)),
\end{equation}
where 
\begin{equation}
\label{a-b-2-cut}
    a_2(t) = \sqrt{-2 -t}, \quad b_2(t) = \sqrt{2 - t},
\end{equation}
and $\sqrt{\diamond}$ is the principle branch. Furthermore, we have that the support of $\mu_t$ is given by 
\begin{equation}
    J_{t} = \Ga[-b_2, -a_2] \cup \Ga[a_2, b_2] := J_{t, 1} \cup J_{t, 2},
    \label{eq:jt-arcs}
\end{equation}
    where $J_{t, 1} = \Ga[-b_2, -a_2]$ and $J_{t, 2} = \Ga[a_2, b_2]$. For $t \in \mathcal{O}_2$, we take $\Ga_t = \Ga(\ee^{\pi \ii} \infty, -b_2) \cup J_{t, 1} \cup I_t \cup J_{t, 2} \cup \Ga(b_2 , \ee^{0\pi \ii}\infty)$, see Figure \ref{fig:eta-sign-chart} for a schematic.
\end{theorem}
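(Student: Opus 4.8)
The plan is to derive the explicit formula \eqref{Q-two-cut}--\eqref{a-b-2-cut} for $Q(z;t)$ directly from Theorem \ref{KS-thm} together with the evenness of $V$, and then to obtain the description \eqref{eq:jt-arcs} of $J_t$ and the admissibility of the displayed contour $\Ga_t$ by a continuation argument in $t$ over the connected region $\Oo_2$, anchored at the classical case $t\in(-\infty,-2)$, where the whole picture is explicit.

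For the form of $Q$: since $V(z;t)$ is even, the equilibrium problem \eqref{em1}--\eqref{em2} is invariant under $z\mapsto -z$, so by the uniqueness in Theorem \ref{KS-thm}(b) the measure $\mu_t$ is even; hence its Cauchy transform $\int\dd\mu_t(x)/(z-x)$, and therefore (by \eqref{em5}) $Q^{1/2}(z;t)=\tfrac12 V'(z;t)-\int\dd\mu_t(x)/(z-x)$, are odd functions, so the degree-$6$ polynomial $Q(z;t)$ of Theorem \ref{KS-thm}(c) is even. The endpoints of $J_t$ are exactly the simple zeros of $Q$ (a hard edge being a square-root branch point of $Q^{1/2}$), so $t\in\Oo_2$ means $Q(z;t)$ has four simple zeros; but an even sextic with exactly four simple zeros necessarily has a double zero at the origin and no other zeros, which forces $Q(z;t)=c\,z^2(z^2-a_2^2)(z^2-b_2^2)$ with $a_2^2,b_2^2$ distinct and nonzero and $c=\tfrac14$ by the normalization $Q^{1/2}(z;t)=\tfrac12 z^3+\mathcal O(z)$, i.e.\ \eqref{Q-two-cut}. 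Expanding $Q^{1/2}(z;t)=\tfrac12 z(z^2-a_2^2)^{1/2}(z^2-b_2^2)^{1/2}$ at $z=\infty$ and matching the coefficients of $z$ and $z^{-1}$ against $\tfrac12 V'(z;t)-\int\dd\mu_t(x)/(z-x)=\tfrac12 z^3+\tfrac t2 z-z^{-1}+\mathcal O(z^{-3})$ gives $a_2^2+b_2^2=-2t$ and $a_2^2 b_2^2=t^2-4$, hence $\{a_2^2,b_2^2\}=\{-2-t,\,2-t\}$. The labelling \eqref{a-b-2-cut} by the principal branch is consistent because $\overline{\Oo_2}$ meets $\R$ only in $(-\infty,-2]$ (as one reads off Figure \ref{fig:phase}), so $-2-t$ and $2-t$ stay out of $(-\infty,0)$, and it is pinned down by requiring $0<a_2<b_2$ when $t<-2$; this also establishes the $\overline{\Oo_2}$ part of the first assertion by continuity.

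For $J_t$ and $\Ga_t$: by Theorem \ref{KS-thm}(d), $J_t$ is a union of short trajectories of $-Q(z;t)\dd z^2$, and by \eqref{em6} the positivity of $\mu_t$ forces $Q_+(z;t)\dd z^2<0$ along $J_t$, so $J_t$ consists of trajectories joining zeros of $Q$. When $t_0\in(-\infty,-2)$ everything is explicit: $a_2,b_2\in(0,\infty)$, $Q(x;t_0)<0$ on $(a_2,b_2)\cup(-b_2,-a_2)$ and $Q(x;t_0)>0$ on $(-a_2,a_2)\setminus\{0\}$ and on $\{|x|>b_2\}$; $\R$ is an $S$-curve with $J_{t_0}=[-b_2,-a_2]\cup[a_2,b_2]$, so the function $\mathcal U(\cdot\,;t_0)$ of Remark \ref{remark:contour-freedom} vanishes on these two intervals and is negative on $\R\setminus J_{t_0}$, which yields \eqref{eq:jt-arcs} and the stated $\Ga_t$ at $t_0$ with $I_{t_0}=[-a_2,a_2]$ and $\Ga(\ee^{\pi\ii}\infty,-b_2)=(-\infty,-b_2]$, $\Ga(b_2,\ee^{0}\infty)=[b_2,\infty)$. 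For a general $t\in\Oo_2$ I would continue in $t$ (using the symmetry $z\mapsto\bar z$, $t\mapsto\bar t$ to reduce to the part of $\Oo_2$ in the closed upper half-plane): the zeros $0,\pm a_2(t),\pm b_2(t)$ of $Q$ remain distinct and bounded, $\Oo_2$ is connected, and by the structural stability of $S$-curves in the Stahl--Gonchar--Rakhmanov theory, worked out for this quadratic differential in \cite{BGM}*{Section 2.4.2}, the combinatorial type of the critical graph of $-Q(z;t)\dd z^2$ is locally constant on $\Oo_2$ — two short trajectories joining $a_2$ to $b_2$ and $-a_2$ to $-b_2$, none incident with the origin. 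Finally, since $-Q(z;t)\dd z^2$ has a pole of order $10$ at infinity (as $Q\sim\tfrac14 z^6$) with eight asymptotic directions, and $\mathcal U(z;t)=\ell_{\Ga_t}-\Re V(z;t)-2U^{\mu_t}(z)$ behaves like $-\tfrac14|z|^4\cos(4\arg z)$ there, the open set $\{\mathcal U(\cdot\,;t)<0\}$ is connected, contains the origin, a neighborhood of $J_{t,1}\cup J_{t,2}$, and the directions $\arg z\in\{0,\pi\}$; by Remark \ref{remark:contour-freedom} any arc inside $\{\mathcal U(\cdot\,;t)<0\}$ interpolating the pieces $J_{t,1},I_t,J_{t,2}$ and escaping to infinity at angles near $0$ and $\pi$ is an admissible $\Ga_t$, and the specific choice in the theorem (with $I_t$ as prescribed just before the statement) is obtained by deforming the classical one analytically in $t$.

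I expect the continuation in the third step to be the main obstacle: one must exclude, uniformly over $\Oo_2$, the birth of any additional short trajectory — in particular one incident with the origin, which would place $t$ on $\partial\Oo_2$ or in $\Oo_1$ — and more generally any change in the combinatorial type of the critical graph of $-Q(z;t)\dd z^2$. This is precisely the delicate global trajectory analysis carried out in \cite{BGM}*{Section 2.4.2}, on which I would rely rather than reprove. Everything else is either an immediate consequence of Theorem \ref{KS-thm} or routine bookkeeping: the asymptotic matching of coefficients, fixing orientations, and checking that the concatenated pieces $\Ga(\ee^{\pi\ii}\infty,-b_2)$, $J_{t,1}$, $I_t$, $J_{t,2}$, $\Ga(b_2,\ee^{0}\infty)$ form a contour in the class $\mathcal T$.
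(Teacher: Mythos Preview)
The paper does not prove this theorem; it is stated as a summary of results from \cite{BGM}*{Section 2.4.2} and no argument is given in the paper itself. So there is no ``paper's own proof'' to compare against.

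That said, your proposal is a reasonable reconstruction. Your derivation of the form \eqref{Q-two-cut}--\eqref{a-b-2-cut} from evenness of $V$ and the asymptotic matching of $Q^{1/2}$ with $\tfrac12 V'-\int\dd\mu_t/(z-\cdot)$ is correct and is in fact more than the paper offers (the paper simply quotes the formulas). The parity argument forcing the double zero of the even sextic to sit at the origin is clean, and the coefficient matching is right: $a_2^2+b_2^2=-2t$ and $(a_2^2-b_2^2)^2=16$ give exactly $\{a_2^2,b_2^2\}=\{-2-t,\,2-t\}$.

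You also correctly identify where the substance lies: the description \eqref{eq:jt-arcs} of $J_t$ and the admissibility of the displayed $\Ga_t$ for all $t\in\Oo_2$ require the global trajectory analysis of $-Q(z;t)\dd z^2$, i.e.\ showing that the combinatorial type of the critical graph (two short trajectories $\Ga[a_2,b_2]$ and $\Ga[-b_2,-a_2]$, none through the origin, with the sign pattern of $\mathcal U$ as in Figure~\ref{fig:eta-sign-chart}) persists throughout $\Oo_2$. You anchor at $t<-2$ and propose continuation; this is precisely the delicate point, and you rightly defer to \cite{BGM}*{Section 2.4.2} for it rather than claim it is routine. In that sense your proposal and the paper's treatment are aligned: both rely on \cite{BGM} for the trajectory structure; you additionally supply the algebraic determination of $Q$, which the paper takes as given.
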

The support $J_t$ and sets $\{z \ : \  \mc U(z; t)<0\}$ were numerically computed for various choices $t$ in \cite{BGM}*{Figures 3 - 8}.
\begin{figure}[t]
	\centering
        \includegraphics[scale = 0.25]{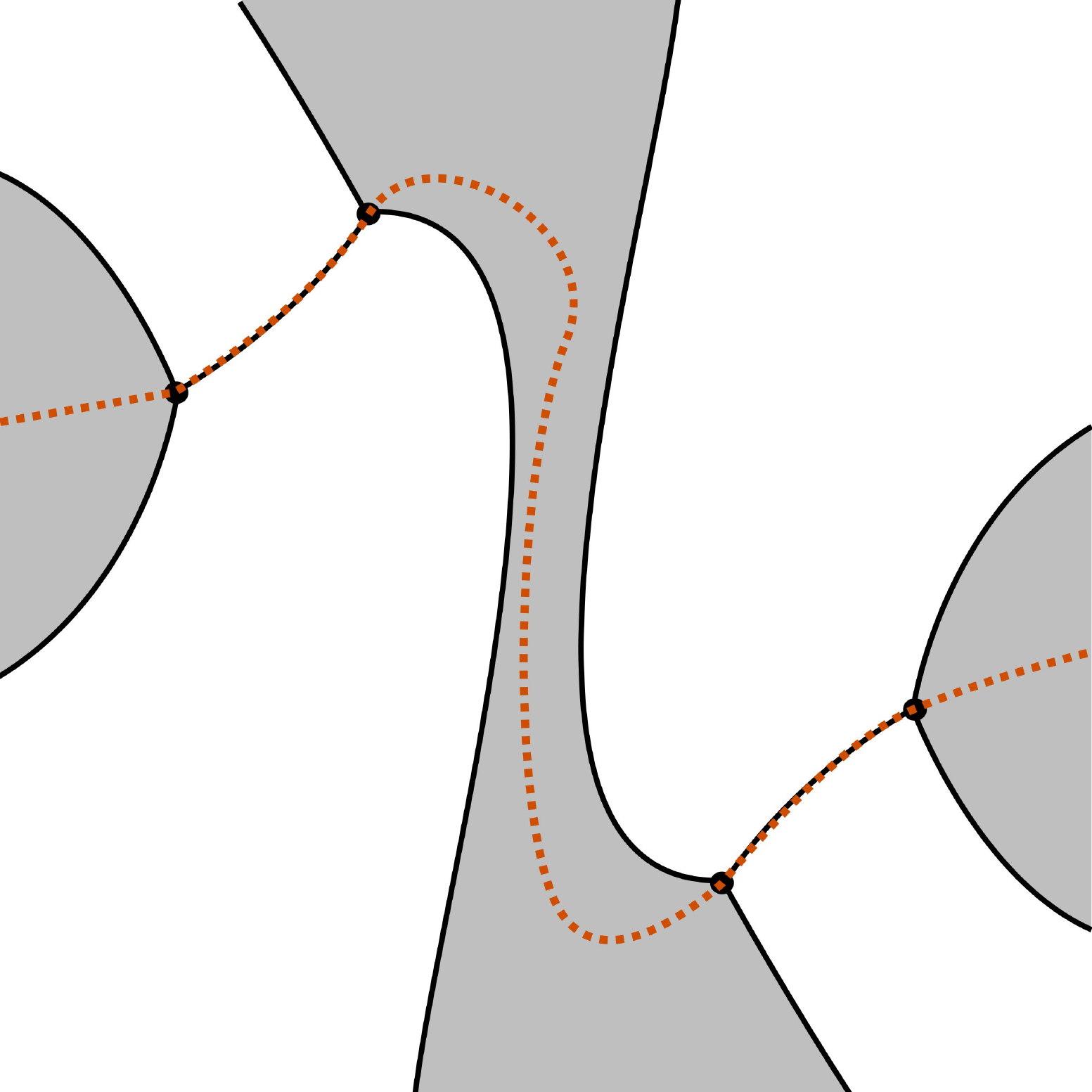}
        \put(-150, 110){$-b_2$}
        \put(-150, 150){$-a_2$}
        \put(-55, 30){$a_2$}
        \put(-45, 70){$b_2$}
        \caption{Schematic representation of the trajectories of $-Q(z;t)\dd z$ with $Q(z;t)$ as in \eqref{Q-two-cut} and $t \in \Oo_2$. The shaded region is the set $\{z \ : \ \mc U(z; t) <0\}$. The dashed contour is a sample choice of $\Gamma_t$.}
	\label{fig:eta-sign-chart}
\end{figure}

\subsection{\texorpdfstring{Construction of the $g$-function}{Construction of the g-function}}
\label{sec:g-fun}
The first step in Section \ref{sec:irhp} will be to normalize the initial Riemann-Hilbert problem \hyperref[rhy]{\rhy} at $z = \infty$, for which we require a function $g(z)$, colloquially known as the \emph{$g$-function}: 
\begin{equation}
\label{g-def}
    g(z; t):= \int_{J_{t}} \log(z - x) \dd \mu_t(x),
\end{equation}
where, for a fixed $x \in J_t$, where $\log(\diamond - s)$  is the principle branch with a branch cut starting at $s$ and extending along $\Gamma_t$ to $\ee^{\pi \ii} \infty$. It follows from the definition of $g$ that 
\begin{equation}
\label{g-potential}
  U^{\mu_t}(z) = - \Re ( g(z;t) ) 
\end{equation}
and we can now rewrite the Euler-Lagrange conditions:
\begin{equation}
\label{euler-lagrange-g}
    \begin{aligned}
    -\Re g(z;t) + \dfrac{1}{2} \Re V(z;t) = \left\{ \begin{array}{ll} = \ell_{t}, & z \in J_{t}, \\ \geq \ell_{t}, & z \in \Gamma_t \setminus J_t. \end{array} \right.
    \end{aligned}
\end{equation}
Furthermore, we note that 
\begin{equation}
\label{resolvant}
    g'(z;t) = \int_{J_t} \dfrac{\dd \mu_t(x)}{z - x}. 
\end{equation}
For $t \in \Oo_2$, the $g$-function was constructed in \cite{BGM}; its relevant properties are recalled in Section \ref{sec:g-szego}.

\subsection{Szeg\H{o} function}
To handle the general double-scaling limit where $\{N_n\}_{n=1}^\infty \subset \N$ with $|N_n - n| < C$ for a fixed $C \geq 0$\footnote{As opposed to only the case $N=n$, which does not require the construction in this section.}, we will need the \emph{Szeg\H{o} function} of $\ee^{V(z;t)}$. Let
\begin{equation}
\label{Q-R}
    Q^{1/2}(z;t) = \dfrac{1}{2} z R^{1/2}(z;t)
\end{equation}
where $R^{1/2}(z;t) = \sqrt{(z^2 - a_2^2)(z^2 - b_2^2)}$ is the branch which is analytic in $\C \setminus J_t$ and satisfies $R^{1/2}(z) = z^2 + \Oo(z)$ as $z \to \infty$. The following proposition defines and summarizes the properties of the Szeg\H{o} function. 

\begin{proposition}
\label{Szego-prop}
Let $R^{1/2}(z;t)$ be is as in \eqref{Q-R}. Then, the Szeg\H{o} function
\begin{equation}
\label{Szego-fun}
    \mathcal{D}(z;t) := \exp \left( \dfrac{1}{2}V(z;t) - \dfrac{1}{8} (z^2 + t) R^{1/2}(z;t)\right). 
\end{equation}
is analytic in $\overline{\C}\setminus J_t$ and has continuous boundary values on $J_t \setminus \{\pm a_2, \pm b_2\}$ which satisfy
\begin{equation}
\label{Szego-jump}
    \mathcal{D}_+(x; t) \mathcal{D}_-(x; t)  = \ee^{V(x;t)}, \quad x \in J_t,
\end{equation}
where $\pm$-signs are with respect to the orientation of $J_t$ defined in \eqref{eq:jt-arcs} and Notation \ref{note:notation}. Furthermore, $\mathcal{D}(z;t)$ is bounded as $z \to e \in \{\pm a_2, \pm b_2\}$, and there exists $\ell_*(t) \in \C$ (cf. \eqref{ell}) such that
\begin{equation}
\label{Szego-lim}
\mathcal{D}(z;t) = \ee^{-\ell_*(t)/2} \left( 1 + \dfrac{1}{4z^4} + \Oo\left( {z^{-6}} \right) \right) \qasq z \to \infty.
\end{equation}

\end{proposition}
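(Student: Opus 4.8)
The plan is to take the closed form \eqref{Szego-fun} as given and verify each asserted property by direct computation; the one useful algebraic input is the identity
\[
R(z;t) = (z^2-a_2^2(t))(z^2-b_2^2(t)) = (z^2+t)^2 - 4,
\]
which is immediate from $a_2^2(t)=-2-t$, $b_2^2(t)=2-t$ in Theorem~\ref{thm:2-cut}. Thus $R^{1/2}(z;t)=\sqrt{(z^2+t)^2-4}$, with the branch points exactly at $\pm a_2,\pm b_2$, the cut on $J_t$, and the branch normalized by $R^{1/2}(z;t)=z^2+\Oo(z)$ at infinity (consistent with \eqref{Q-R}).

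First, analyticity off $J_t$: $V(z;t)$ is entire and $(z^2+t)R^{1/2}(z;t)$ is analytic on $\C\setminus J_t$, so $\mathcal D(z;t)$ is analytic there, and being the exponential of a single-valued function it is also nonvanishing. For the branch points, near $e\in\{\pm a_2,\pm b_2\}$ (four distinct simple zeros of $R$ for $t\in\Oo_2$) one has $R^{1/2}(z;t)=\Oo((z-e)^{1/2})$, so the exponent tends to $\tfrac12 V(e;t)$ and $\mathcal D$ extends continuously to $e$ with value $\ee^{V(e;t)/2}$; in particular it is bounded there, and it has continuous boundary values on $J_t\setminus\{\pm a_2,\pm b_2\}$ since $R^{1/2}$ does on each side. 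For the jump, $R^{1/2}$ changes sign across $J_t$, i.e. $R^{1/2}_+(x;t)=-R^{1/2}_-(x;t)$, whence
\[
\mathcal D_+(x;t)\mathcal D_-(x;t)=\exp\!\Big(V(x;t)-\tfrac18(x^2+t)\big(R^{1/2}_+(x;t)+R^{1/2}_-(x;t)\big)\Big)=\ee^{V(x;t)},
\]
which is \eqref{Szego-jump}; the product is symmetric, so the orientation conventions of Notation~\ref{note:notation} play no role here.

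It remains to expand at infinity. Setting $w=z^2+t$, one has $\tfrac12 V(z;t)=\tfrac18 w^2-\tfrac{t^2}{8}$, while $w\sqrt{w^2-4}=w^2-2-2w^{-2}+\Oo(w^{-4})$ gives $\tfrac18(z^2+t)R^{1/2}(z;t)=\tfrac18 w^2-\tfrac14-\tfrac1{4w^2}+\Oo(w^{-4})$. Subtracting, the $w^2$ terms — hence all the $z^4$ and $z^2$ terms — cancel, and using $w^{-2}=z^{-4}+\Oo(z^{-6})$,
\[
\tfrac12 V(z;t)-\tfrac18(z^2+t)R^{1/2}(z;t)=\Big(\tfrac14-\tfrac{t^2}{8}\Big)+\tfrac1{4z^4}+\Oo(z^{-6}).
\]
Exponentiating yields \eqref{Szego-lim} with $\ee^{-\ell_*(t)/2}=\ee^{1/4-t^2/8}$, i.e. $\ell_*(t)=\tfrac{t^2}{4}-\tfrac12$, which one checks agrees with \eqref{ell}; moreover, since $\sqrt{w^2-4}$ is analytic in $1/w$ near $0$, $\mathcal D$ is genuinely analytic at $z=\infty$, completing the claim that $\mathcal D$ is analytic on $\overline\C\setminus J_t$.

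The argument is elementary; the only points requiring care are (i) confirming that the polynomial part of the exponent cancels \emph{exactly} at infinity — this is precisely what the combination $\tfrac12 V-\tfrac18(z^2+t)R^{1/2}$ is designed to achieve and it uses the explicit values of $a_2,b_2$ — and (ii) tracking the branch of $R^{1/2}$ and the $\pm$ conventions consistently. Had the closed form not been supplied, one would instead construct $\mathcal D$ by solving the scalar additive Riemann–Hilbert problem with jump $V(\cdot\,;t)$ on $J_t$ via a Cauchy integral against $R^{1/2}$ and then recognize the result as \eqref{Szego-fun}; with \eqref{Szego-fun} in hand this is unnecessary.
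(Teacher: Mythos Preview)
Your proof is correct and follows essentially the same approach as the paper: verify the jump from $R^{1/2}_+=-R^{1/2}_-$ across $J_t$, and compute the expansion at infinity to read off $\ell_*(t)$. The only difference is cosmetic: for the large-$z$ expansion you exploit the algebraic identity $R(z;t)=(z^2+t)^2-4$ and the substitution $w=z^2+t$, whereas the paper derives $(z^2+t)R^{1/2}(z;t)=4V(z;t)+4\ell_*(t)+\Oo(z^{-2})$ from the equilibrium-measure relation $Q^{1/2}=\tfrac12 V'-\tfrac1z+\Oo(z^{-3})$ in \eqref{em5}; both routes yield the same cancellation and the same value of $\ell_*$.
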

Proposition \ref{Szego-prop} is proved in Section \ref{sec:szego-proof}. It will be convenient to define the \emph{normalized Szeg\H{o} function} as
\begin{equation}
    D(z;t) := \ee^{\ell_*(t)/2}\mathcal{D}(z;t).
    \label{eq:normalized-szego-fun}
\end{equation}
\subsection{Riemann surface}
\label{subsec:RS}
We will need an arsenal of functions that naturally live on an associated Riemann surface, which we introduce here. Let $R(z;t) = (z^2 - a_2^2(t))(z^2 - b_2^2(t))$ and
\[
\RS := \left \{(z, w) \ : \ w^2 = R(z; t) \right\}.
\]
Denote by \( \pi:\RS\to\overline\C \) the natural projection \( \pi(\z) =z \) and by $\diamond^*: \RS \to \RS$ the natural involution $\z = (z, w) \implies \z^* = (z, w)^* = (z, -w)$.   We use notation \( \z,\x,\boldsymbol a \) for points on \( \RS \) with natural projections \( z,x,a \). 

The function \( w(\z) \), defined by \( w^2(\z)=R(z; t) \), is meromorphic on \( \RS \) with simple zeros at the ramification points \( \boldsymbol E_t =\{ \pm \boldsymbol a_2,\pm \boldsymbol b_2 \} \), double poles at the points on top of infinity, and is otherwise non-vanishing and finite.
Set
\[
\boldsymbol\Delta:=\pi^{-1}(J_t) \quad \text{and} \quad \RS = \RS^{(0)}\cup\boldsymbol\Delta\cup \RS^{(1)},
\]
where the domains \( \RS^{(k)} \) project onto \( \overline\C\setminus J_t \) with labels chosen so that \( w(\z) = (-1)^kz^2 + \Oo(z) \) as \( \z \) approaches the point on top of infinity within \( \RS^{(k)} \). For \( z\in\overline\C\setminus J_t \) we let \( z^{(k)} \) stand for the unique \( \z\in \RS^{(k)} \) with \( \pi(\z)=z \). Since the restrictions $\pi|_{\RS^{(k)}}$ are bijections, given a function $F: \RS \to \C$ we define $F^{(k)}: \C \setminus J_t \to \C$ to be the pull-back $F\left( ( \pi|_{\RS^{(k)}})^{-1}(z) \right)$.

Next, we define a homology basis on \( \RS \) in the following way: Theorem \ref{thm:2-cut} and the discussion preceding it, we let 
\[ 
\boldsymbol\alpha := \pi^{-1}(I_t) \quad \text{and} \quad \boldsymbol\beta := \pi^{-1}(J_{t,1}),
\]
where \( \boldsymbol\alpha \) is oriented towards \( -\boldsymbol a_2 \) within \( \RS^{(0)} \) and \( \boldsymbol\beta \) is oriented so that \( \boldsymbol\alpha,\boldsymbol\beta \) form the right pair at \( -\boldsymbol a_2 \), see Figure \ref{fig:planar-model}. We will often use the notation $\RS_{\ualpha, \ubeta} := \RS \setminus \{ \ualpha, \ubeta\}$.

\begin{figure}[t]

\begin{tikzpicture}[scale = 1.0]
\draw[fill=black!30] (0, 0) -- (0, 2) -- (4, 2) -- (4, -2) -- (0, -2) -- cycle;
%
\node at (4, 2){\textbullet};\node[above right] at (4, 2){$- \boldsymbol a_2$};
\node at (4, -2){\textbullet};\node[below right] at (4, -2){$-\boldsymbol a_2$};
\node at (-4, 2){\textbullet};\node[above left] at (-4, 2){$-\boldsymbol a_2$};
\node at (-4, -2){\textbullet};\node[below left] at (-4, -2){$-\boldsymbol a_2$};
\node at (-4, 0){\textbullet};
\node[left] at (-4, 0){$-\boldsymbol b_2$};
\node at (4, 0){\textbullet};
\node[right] at (4, 0){$-\boldsymbol b_2$};
\node at (0, 0){\textbullet }; \node [above right] at (0, 0){$\boldsymbol b_2$};
\node at (0, 2){\textbullet}; \node [above] at (0, 2){$\boldsymbol a_2$};
\node at (0, -2){\textbullet}; \node [below] at (0, -2){$\boldsymbol a_2$};

\draw (-4, -2) -- (4, -2); \node at (-2, -2){$<$};\node at (2, -2){$<$};
\node[below] at (-2, -2.2){$\boldsymbol \alpha$};\node[below] at (2, -2.2){$\boldsymbol \alpha$};
\draw (-4, 2) -- (4, 2); \node at (-2, 2){$<$};\node at (2, 2){$<$};
\node[above] at (-2, 2.2){$\boldsymbol \alpha$};\node[above] at (2, 2.2){$\boldsymbol \alpha$};
\draw (-4, -2) -- (-4, 2); \node[rotate = 90] at (-4, -1){$<<$};\node[rotate = 90] at (-4, 1){$<<$};
\node[left] at (-4.2, -1){$\boldsymbol \beta$};\node[left] at (-4.2, 1){$\boldsymbol \beta$};
\draw (4, -2) -- (4, 2); \node[rotate = 90] at (4, -1){$<<$};\node[rotate = 90] at (4, 1){$<<$};
\node[right] at (4.2, -1){$\boldsymbol \beta$};\node[right] at (4.2, 1){$\boldsymbol \beta$};
\node at (-2, 0){$\RS^{(0)}$};
\node at (2, 0){$\RS^{(1)}$};
\end{tikzpicture}
\caption{Schematic representation of the surface \( \RS \) (shaded region represents \( \RS^{(1)} \)), which topologically is a torus, and the homology basis \(\boldsymbol \alpha,\boldsymbol \beta\) .}
\label{fig:planar-model}
\end{figure}
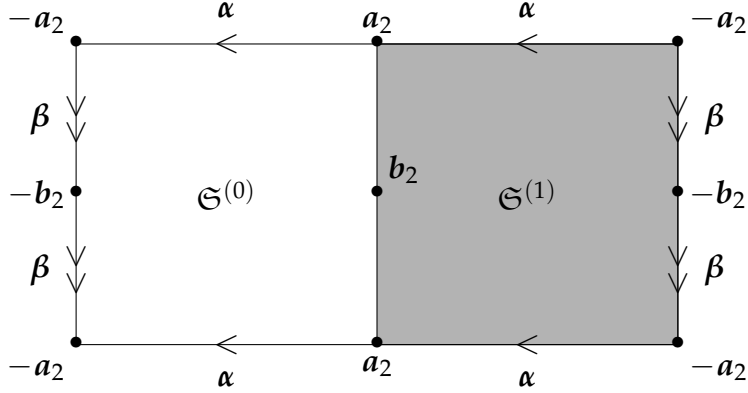
Surface $\RS$ is genus 1, and so the vector space of holomorphic differentials is one dimensional and we choose the following basis 
\begin{equation}
\label{hd}
    \mathcal{H}(z) = \left( \int_{\boldsymbol \alpha}\dfrac{\dd x}{w(\x)} \right)^{-1} \dfrac{\dd z}{w(\z)},
\end{equation}
which is normalized to have period 1 on $\boldsymbol \alpha$; it follows (see, e.g. \cite{MR583745}) that when $t \in \mathcal{O}_2$,
\[
\mathrm{B}(t) := \int_{\boldsymbol \beta} \hd \implies \Im \mathrm B(t) > 0.
\]
In what follows, we suppress the dependence of $\B(t)$ on $t$ and write $\B(t) = \B$.
\subsubsection{Abel's map} Let $\A: \RSab \to \C$ be the abelian integral
\begin{equation*}
\mathcal{A}(\z) := \int_{\b_2}^\z \hd, \quad \z \in \RS_{\ualpha, \ubeta},
\end{equation*}
where the path of integration is taken in $\RS_{\ualpha, \ubeta}$. $\A$ is holomorphic in $\RSab$, and it follows from the definition of $\B$ and the normalization of $\hd$ that
\begin{equation}
\label{abel-jumps}
    (\A_+ - \A_-)(\x) = \left\{ \begin{array}{rl} -\B, & \x \in \ualpha\setminus \{- \a_2\}, \medskip \\ 1, & \x \in \ubeta\setminus \{- \a_2\}.\end{array} \right.
\end{equation}

Let $\jac(\RS) = \C / \{\Z + \B \Z\}$ denote the Jacobi variety of $\RS$, whose elements are the following equivalence classes, given $x \in \C$, 
\[
[x] := \{x + k + j \B \in \C \ : x \in \ k, j \in \Z \}.
\]
It follows from \eqref{abel-jumps} that the \emph{Abel's map} $\z \mapsto [\A(\z)]$ is well-defined. Since $\RS$ has genus 1, the Abel map is an isomorphism \cite{MR583745}*{Section III.6}. We will use this to formulate a Jacobi inversion problem in Section \ref{sec:jip}.

\subsection{Jacobi inversion problem} 
\label{sec:jip}
In view of the Abel map being an isomorphism, let $\z_{[x]}$ denote the unique solution to the equation 
\[
\left[\A \left(\z_{[x]} \right) \right] = [x]. 
\]
The problem of determining $\z_{[x]}$ for a given $x \in \C$ is known as the Jacobi inversion problem. We will be interested in two problems, namely 
\begin{equation*}
\label{jip}
    \left[\A \left(\z_{k, [x_*]} \right) \right] = [x_*] \quad \text{ where } \quad x_* = \A \left(0^{(k)}\right) + \dfrac{1}{2}, \quad  k \in \{0, 1\}.
\end{equation*}
Due to various symmetries of $\RS$, it turns out we can solve this problem exactly. This is the content of the following proposition.
\begin{proposition}
\label{prop-jip}
Let $x_*$ be as above, and $k \in \{0, 1\}$ then $\z_{k, [x_*]} = 0^{(1-k)}$.
\end{proposition}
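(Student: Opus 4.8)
The plan is to exploit the symmetries of $\RS$ to pin down $\z_{k,[x_*]}$ without solving a transcendental equation. The key symmetry is the involution $\z \mapsto \z^*$ together with the hyperelliptic-type reflection $\z \mapsto -\z$ (i.e. $(z,w)\mapsto(-z,w)$, which is a well-defined automorphism of $\RS$ since $R(z;t)$ is even in $z$). First I would record how Abel's map $\A$ transforms under these automorphisms. Because $\hd$ is, up to the normalizing constant, $\dd z / w(\z)$, one checks that $\diamond^*$ sends $\hd$ to $-\hd$ and that the map $(z,w)\mapsto(-z,w)$ sends $\hd$ to $-\hd$ as well (the $\dd z$ picks up a sign). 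Tracking base points and the jump relations \eqref{abel-jumps}, this yields congruences of the form $\A(\z^*) \equiv -\A(\z) + c_1$ and $\A(-\z) \equiv -\A(\z) + c_2$ modulo the period lattice $\Z + \B\Z$, where the constants $c_1, c_2$ are determined by evaluating at a convenient fixed point (e.g. a ramification point or a point over infinity).

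Next I would compute the relevant constants explicitly. The points $0^{(0)}$ and $0^{(1)}$ lie over $z = 0$, which is \emph{not} a branch point, and they are interchanged by the involution $\diamond^*$; I expect $\A(0^{(0)}) + \A(0^{(1)})$ to reduce, via the $\diamond^*$-congruence above, to a half-period or an explicit lattice point. Likewise, the automorphism $(z,w)\mapsto(-z,w)$ fixes the set $\{0^{(0)}, 0^{(1)}\}$ (it maps $z=0$ to $z=0$) and permutes the ramification points in pairs $\pm\a_2 \leftrightarrow \mp\a_2$, $\pm\b_2\leftrightarrow\mp\b_2$, which constrains the constant $c_2$. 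Combining these, the definition $x_* = \A(0^{(k)}) + \tfrac12$ should collapse so that $[x_*] = [-\A(0^{(k)}) + (\text{lattice point})] = [\A((0^{(k)})^*)] = [\A(0^{(1-k)})]$; in other words $x_* \equiv \A(0^{(1-k)})$ modulo $\Z+\B\Z$. Since the Abel map is an isomorphism on $\jac(\RS)$ (genus $1$), the solution of the Jacobi inversion problem is then literally $\z_{k,[x_*]} = 0^{(1-k)}$, as claimed.

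The main obstacle, and the step that needs genuine care rather than bookkeeping, is getting the additive constants \emph{exactly right} — in particular the role of the $\tfrac12$ and ensuring it matches the half-period arising from $\diamond^*$ applied to points over $z=0$. One must be careful about: (i) the choice of base point $\b_2$ in the definition of $\A$ and the path-of-integration convention inside $\RSab$; (ii) the branch-cut structure of $\log$ and $w$, which can shift constants by elements of the lattice; and (iii) consistency of the orientations of $\ualpha, \ubeta$ (the "right pair at $-\a_2$" convention) with the signs in \eqref{abel-jumps}. The cleanest route is probably to evaluate the $\diamond^*$-congruence at $\z = \b_2$ (where $\A(\b_2) = 0$ by definition) to fix $c_1$ directly, then to verify the half-period statement by a short residue/period computation showing $\A(0^{(0)}) + \A(0^{(1)}) \equiv 0$ or $\equiv \tfrac12 \bmod \Z+\B\Z$; the symmetry $z\mapsto -z$ should then force whichever of these is consistent. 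Once the constants are nailed down, the conclusion is immediate from injectivity of Abel's map.
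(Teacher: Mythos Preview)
Your plan is correct and rests on the same two symmetries the paper uses: the hyperelliptic involution $\z\mapsto\z^*$ and the even-in-$z$ automorphism $(z,w)\mapsto(-z,w)$. Where the organization differs is in how the troublesome constant is pinned down. You package the symmetries as congruences $\A(\z^*)\equiv-\A(\z)+c_1$, $\A(-\z)\equiv-\A(\z)+c_2$ and then face the task --- which you correctly flag as the main obstacle --- of showing that the resulting half-period is exactly $\tfrac12$ rather than $\tfrac{\B}{2}$ or $\tfrac{1+\B}{2}$. The paper sidesteps this by working with explicit path integrals based at $\pm\a_2$ instead of $\b_2$: since an involution-symmetric representative of $\ualpha$ passes through both $0^{(0)}$ and $0^{(1)}$, the normalization $\oint_{\ualpha}\hd=1$ immediately gives $\int_{-\a_2}^{\a_2}\hd=\tfrac12$ along half that cycle, and then a short chain of the two symmetry identities rewrites this as the exact equation $\A(0^{(0)})-\A(0^{(1)})=\tfrac12$ (not merely modulo the lattice). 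So the paper's route avoids ever having to evaluate $\A(-\b_2)$ or otherwise distinguish among the nonzero half-periods --- the $\tfrac12$ drops out of the $\ualpha$-normalization directly. Your approach would ultimately arrive at the same place, but the shortest way to close your remaining gap is precisely this: use the $\ualpha$-period to read off $\tfrac12$ rather than trying to compute $c_2$ from the images of Weierstrass points.
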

\begin{proof}
Let $\tilde{\ualpha}$ be an involution-symmetric cycle homologous to $\ualpha$ containing $0^{(0)}, 0^{(1)}$, and note the following two symmetries:
\begin{equation}
\label{A-syms}
\int_{\pm \boldsymbol a_2}^{\z} \hd = - \int_{\pm \boldsymbol a_2}^{\z^*} \hd, \qandq \int_{\pm \boldsymbol a_2}^{0^{(k)}} \hd = \int_{0^{(k)}}^{\mp \boldsymbol a_2} \hd.
\end{equation}
Indeed, the first equality follows from $w(\z^*) = -w(\z^*)$ while the other follows from $w(\pi(\z)) = w(-\pi(\z))$. Using the first symmetry and normalization \eqref{hd}, for a path contained in $\RS^{(1)}$ we have that 
\[
\int_{-\boldsymbol a_2}^{\boldsymbol a_2} \hd = \dfrac{1}{2}.
\]
Using this and \eqref{A-syms}, we have
\begin{multline*}
\dfrac{1}{2} = \int_{-\boldsymbol a_2}^{\boldsymbol a_2} \hd = \int_{-\boldsymbol a_2}^{0^{(1)}} \hd + \int_{0^{(1)}}^{\boldsymbol a_2} \hd = -\int_{-\boldsymbol a_2}^{0^{(0)}} \hd + \int_{0^{(1)}}^{\boldsymbol a_2} \hd \\
= \int_{0^{(0)}}^{-\boldsymbol a_2} \hd + \int_{0^{(1)}}^{\boldsymbol a_2} \hd 
= \int^{0^{(0)}}_{\boldsymbol a_2} \hd + \int_{0^{(1)}}^{\boldsymbol a_2} \hd.
\end{multline*}
So, $\left[\A\left( 0^{(0)} \right) - \A\left(0^{(1)} \right) \right] = \left[1/{2}\right]$. Noting that $[1/2] = [-1/2]$ yields the desired result.
\end{proof}
In fact, the proof of the above proposition yields a slightly better identity, namely that 
\begin{equation}
    \A(0^{(0)}) = \A(0^{(1)}) + \dfrac{1}{2},
    \label{eq:abel-zero-identity}
\end{equation}
which we will use in the following subsection.

\subsection{Theta functions} Let \( \theta(x;\B) \) be the Riemann theta function associated with \( \B \), i.e.,
\begin{equation}
\label{Rtheta}
\theta(x;\B) = \sum_{k\in\Z}\exp\left( \pi\ii \mathsf \B k^2+2\pi\ii xk\right), \quad x\in\C.
\end{equation}
The function $\theta(x)$ is holomorphic in $\C$ and enjoys the following periodicity properties:
\begin{equation*}
\label{ab4}
\theta(x+j+\B m) = \exp\left\{-\pi\ii \B m^2-2\pi\ii x m\right\}\theta(x), \qquad j,m\in\Z.
\end{equation*}
It is also known that $\theta(x)$ vanishes only at the points of the lattice \( \left[\frac{\mathsf B+1}2\right] \). 

Let
\begin{equation}
\label{theta-k}
\Theta(\z) = \frac{\theta\left(\A(\z) - {\A}\left(0^{(1)}\right) - \frac{\mathsf B+1}2\right)}{\theta\left(\A(\z) - {\A}\left(0^{(0)}\right) - \frac{\mathsf B+1}2\right)}.
\end{equation}
The function $\Theta(\z)$ is meromorphic on \( \RS\setminus \boldsymbol\alpha \) with exactly one simple pole at $\z = 0^{(0)}$, and exactly one simple zero at \( \z = 0^{(1)} \) ($\Theta$ can be analytically continued to multiplicatively multi-valued functions on the whole surface \( \RS \); thus, we can talk about simplicity of a pole or zero whether it belongs to the cycles of a homology basis or not). Moreover, according to \eqref{abel-jumps}, \eqref{eq:abel-zero-identity}, \eqref{Rtheta}, and \eqref{theta-k}, $\Theta(\z)$ has continuous boundary values on $\boldsymbol\alpha,\boldsymbol\beta$ away from \( -\boldsymbol a_2 \) that satisfy
\begin{equation}
\label{theta-jump}
\Theta_{+}(\x) = \ee^{\pi \ii }\Theta_{-}(\x), \quad \x \in \ualpha \setminus \{- \boldsymbol a_2\}.
\end{equation}
\subsection{More auxiliary functions}
Let 
\begin{equation}
\label{gamma}
    \gamma(z) := \left( \dfrac{z - b_2}{z + b_2} \dfrac{z + a_2}{z - a_2} \right)^{1/4}, \quad z \in \overline{\C} \setminus J_t,
\end{equation}
be the branch analytic outside of $J_t$ and satisfying $\gamma(\infty) = 1$. Furthermore, let 
\begin{equation}
\label{A-B-def}
    A(z) := \dfrac{\gamma(z) + \gamma^{-1}(z)}{2}, \quad B(z):=  \dfrac{\gamma(z) - \gamma^{-1}(z)}{2\ii}, \quad z \in \overline{\C} \setminus J_t.
\end{equation}
Both $A(z), B(z)$ are analytic in $\overline{\C} \setminus J_t$ and satisfy 
\begin{equation}
\label{A-B-lim}
A(\infty) = 1, \qandq B(\infty) = 0.
\end{equation}
Furthermore, they satisfy 
\begin{equation}
\label{A-B-jump}
A_{+}(x) =  B_{-}(x) \qandq A_{-}(x) = -B_+(x), \quad x \in J_t \setminus \{\pm a_2, \pm b_2\}.
\end{equation}
Observe that with the choice of branch of $\gamma$, we have that $B(0) = 0$. Indeed, the equation $(AB)(z) = 0$ is equivalent to $\gamma^4(z) = 1$, which has two solutions in $\overline{\C}$, namely $z = \infty$ and $z = 0$. To see that $z = 0$ is a root of $B(z)$ instead of $A(z)$, we will show that $\gamma(0) = 1$. To this end, let $w = \gamma^4(z)$ and observe that $\gamma^4(J_{t, 1})$ is a contour connecting $w=0$ to $w=\infty$. Observe that
\[
\gamma^4(z) = \gamma^4(x) \implies z = x \quad \text{or} \quad z = -\dfrac{a_2 b_2}{x}.
\]
Let $L$ denote a contour connecting $z = \infty$ to $z = 0$ while avoiding $J_{t, 1}$ and its image under the map $z \mapsto -a_2 b_2/z$. Then, $\gamma^4(L)$ is a closed contour containing $w = 1$ and not winding around the origin. Hence, the continuation of the principle branch of $\diamond^{1/4}$ along $\gamma^4(L)$ is single-valued, and produces $\gamma(L)$. Finally, since $\gamma(\infty) = 1$, we have $\gamma(0) = 1$ as desired. 

\section{Main results on asymptotics of \texorpdfstring{$P_n(z;t, N)$}{orthogonal polynomials} and \texorpdfstring{$\gamma_n^2(t, N)$}{recurrence coefficients}}
\label{sec:main2}
To simplify the formulation of the following theorems, we make the following definition
\begin{definition}
    An estimate $f_n(z, t) = \Oo(n^{-\alpha})$ holds $(z, t)$-\emph{locally uniformly} for $z \in V$ as $n \to \infty$ if for each compact $T \subset \Oo_2$ there is a constant $C(V; T) <\infty$ such that for all $z \in V,  t \in T$ and $n$ large enough
    \[
    |f_n(z; t)| \leq C(V;T) n^{-\alpha}.
    \]
\end{definition}
We are now ready to state our main results regarding the asymptotic behavior of the orthogonal polynomials and their recurrence coefficients and normalizing constants. 
\begin{theorem}
\label{thm:asymptotics-poly}
Let $t \in \Oo_2$ and $\{N_n\}_{n\in \N}$ be a sequence of integers such that there exists $C \in \N$ with $|N_n - n| < C$ for all $n \in \N$. Then, 
    \begin{equation}
    \label{p-asymp-outside}
    P_n(z) = \ee^{n g(z)} \left( D^{N_n - n}(z)A(z)\cdot \left\{ \begin{array}{ll} 1, & n \in 2\N, \medskip \\ \dfrac{\Theta^{(0)}(\infty)}{\Theta^{(0)}(z)} , & n \in \N\setminus 2\N \end{array} \right \} + \Oo\left( {n^{-1}} \right) \right), \qasq n \to \infty,
    \end{equation}
    $(z,t)$-locally uniformly in $\C \setminus J_t$, where $g(z), D(z), A(z)$ are as in \eqref{g-def}, \eqref{eq:normalized-szego-fun}, \eqref{A-B-def}, respectively, and the superscripts $\diamond^{(0)}$ are as in Section \ref{subsec:RS}.
\end{theorem}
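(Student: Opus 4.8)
The plan is to carry out a Deift-Zhou nonlinear steepest-descent analysis of the Fokas-Its-Kitaev problem \rhy, whose $(1,1)$ entry is $P_n(z)=P_n(z;t,N_n)$ and whose sole jump is $\left(\begin{smallmatrix}1&\ee^{-N_nV(z;t)}\\0&1\end{smallmatrix}\right)$ across the contour $\Ga_t$ of Theorem \ref{thm:2-cut}; the scheme parallels the one-cut analysis of \cite{BGM}, so I only sketch the chain of transformations and flag the places where the two-cut geometry and the parity of $n$ enter. First I would write $\ee^{-N_nV}=\ee^{-nV}\ee^{-(N_n-n)V}$ and perform the standard $g$-function substitution $\mb{T}_n(z)=\ee^{-n\ell_t\sigma_3/2}\,\mb{Y}_n(z)\,\ee^{-n(g(z;t)-\ell_t/2)\sigma_3}$, with $g$ as in \eqref{g-def} and $\ell_t$ the Lagrange multiplier in \eqref{euler-lagrange-g}. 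Using \eqref{euler-lagrange-g} together with Remark \ref{remark:contour-freedom} (recall $\mathcal U(z;t)<0$ on $\Ga_t\setminus J_t$ and $\mathcal U(z;t)=0$ on $J_t$), the jump of $\mb{T}_n$ on $\Ga_t\setminus J_t$---in particular on the gap arc $I_t$---is $I+\Oo(\ee^{-cn})$ locally uniformly in $t$, while on $J_{t,1}\cup J_{t,2}$ it is purely oscillatory. The remaining factor $\ee^{-(N_n-n)V}$ I would absorb by conjugating with the Szeg\H{o} function of Proposition \ref{Szego-prop}: by \eqref{Szego-jump} the diagonal matrix $D^{(N_n-n)\sigma_3}$, with $D$ the normalized Szeg\H{o} function \eqref{eq:normalized-szego-fun} (so $D(\infty)=1$ by \eqref{Szego-lim}), turns the $(N_n-n)$-dependent scalar jump into a constant one and leaves behind the factor $D^{N_n-n}$ visible in \eqref{p-asymp-outside}.

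Next I would factor the oscillatory jump on each of $J_{t,1},J_{t,2}$ and deform into lens-shaped neighborhoods of the two arcs to define $\mb{S}_n$; the structure of the trajectories of $-Q(z;t)\dd z^2$ recorded in Theorem \ref{thm:2-cut} guarantees that the relevant phase has positive real part on the lens lips, so the new jumps there are $I+\Oo(\ee^{-cn})$ locally uniformly in $t$, and the jump of $\mb{S}_n$ on $J_t$ collapses to the constant $\left(\begin{smallmatrix}0&1\\-1&0\end{smallmatrix}\right)$.

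The heart of the matter is the construction of a global parametrix $\mb{N}_n$: a matrix with jump $\left(\begin{smallmatrix}0&1\\-1&0\end{smallmatrix}\right)$ on $J_t$, normalized to $I$ at infinity, and with only bounded singularities at $\pm a_2,\pm b_2$. I would assemble it from three pieces: the Szeg\H{o} factor $D^{N_n-n}$ of the first step; the genus-zero blocks $A,B$ of \eqref{A-B-def}, built from the fourth root $\gamma$ of \eqref{gamma}, which already solve the constant jump \eqref{A-B-jump} up to a divisor; and, when $n$ is odd, the theta quotient $\Theta$ of \eqref{theta-k} with jump \eqref{theta-jump}, which repairs the divisor and, because $\Theta^{(0)}$ has a pole at $z=0$, furnishes the zero of $P_n$ at the origin forced by oddness. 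The parity enters since $\int_{J_{t,1}}\dd\mu_t=\tfrac12$ by the symmetry $z\mapsto-z$, so the filling-fraction shift in the attendant Jacobi inversion problem is $n/2$, which is trivial modulo $\Z+\B\Z$ exactly when $n$ is even; in the odd case Proposition \ref{prop-jip} (equivalently \eqref{eq:abel-zero-identity}) solves that problem explicitly by $0^{(1-k)}$, which is what collapses the theta quotient to the clean ratio $\Theta^{(0)}(\infty)/\Theta^{(0)}(z)$ appearing in \eqref{p-asymp-outside}. Invertibility of $\mb{N}_n$ I would deduce from $\Im\B>0$ (the remark after \eqref{hd}) and the fact that $\theta(\cdot;\B)$ vanishes only on $[\tfrac{\B+1}{2}]$. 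At each endpoint $e\in\{\pm a_2,\pm b_2\}$, where $Q(z;t)$ has a simple zero and so the local phase behaves like $(z-e)^{3/2}$, I would install the usual Airy parametrix $\mb{P}_{e,n}$ on a fixed disk, matched to $\mb{N}_n$ on its boundary to relative order $\Oo(n^{-1})$.

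Finally I would put $\mb{R}_n=\mb{S}_n\mb{N}_n^{-1}$ off the disks and $\mb{R}_n=\mb{S}_n\mb{P}_{e,n}^{-1}$ on them; then $\mb{R}_n$ has jumps $I+\Oo(n^{-1})$ on the boundary circles and $I+\Oo(\ee^{-cn})$ on the remaining contours, so the standard small-norm theory gives $\mb{R}_n=I+\Oo(n^{-1})$ locally uniformly in $t$ away from any fixed neighborhood of $J_t$---the $t$-uniformity being inherited from the continuous dependence on $t$ of all the ingredients over compact subsets of $\Oo_2$. Unraveling $\mb{Y}_n\mapsto\mb{T}_n\mapsto\mb{S}_n\mapsto\mb{R}_n$ in the exterior region (where $\mb{S}_n=\mb{T}_n$) and reading off the $(1,1)$ entry then yields $P_n(z)=\ee^{ng(z;t)}\big[(\mb{N}_n)_{11}(z)+\Oo(n^{-1})\big]$, with $(\mb{N}_n)_{11}=D^{N_n-n}(z)A(z)$ for $n$ even and $(\mb{N}_n)_{11}=D^{N_n-n}(z)A(z)\,\Theta^{(0)}(\infty)/\Theta^{(0)}(z)$ for $n$ odd, which is \eqref{p-asymp-outside}. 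I expect the main obstacle to be the parametrix construction of the previous paragraph---exhibiting one matrix that simultaneously carries the constant $\left(\begin{smallmatrix}0&1\\-1&0\end{smallmatrix}\right)$ jump, the Szeg\H{o} twist by $N_n-n$, the correct parity-dependent divisor, and only bounded singularities at the four branch points---and the exact solution of the Jacobi inversion problem in Proposition \ref{prop-jip}, together with the elementary facts about $\gamma,A,B$ from Section \ref{sec:asymptotic-analysis-prelim} (notably $\gamma(0)=1$, hence $B(0)=0$, which removes the would-be pole of the even-$n$ parametrix at $z=0$), are precisely what make the closed form \eqref{p-asymp-outside} possible.
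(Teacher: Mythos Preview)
Your overall strategy matches the paper's (Section~\ref{sec:rh}): the same chain $\mb Y_n\to\mb T_n\to\mb S_n\to\mb R_n$, Airy local parametrices at the four branch points, and a global parametrix built from $A,B,\mathcal D$ and, for odd $n$, the theta quotient $\Theta$. The one substantive difference is organizational: the paper does not perform a separate Szeg\H{o} conjugation but instead leaves the factor $\ee^{(n-N_n)V}$ in the jump on $J_t$ and absorbs it directly into $\mb N_n$ via $\mathcal D^{(N_n-n)\sigma_3}$ (see \eqref{N-even}, \eqref{N-odd} and \hyperref[rhn]{\rhn}(b)). Your route is equivalent.

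There is, however, one genuine slip. You assert that after the $g$-transformation the jump of $\mb T_n$ on the gap arc $I_t$ is $I+\Oo(\ee^{-cn})$. It is not: by \eqref{g-jump} one has $(g_+-g_-)(z)=\pi\ii$ on $I_t$, so the diagonal of the $\mb T_n$-jump there is $\ee^{\mp n\pi\ii}=(-1)^n$, and only the off-diagonal entry decays (cf.\ \hyperref[rht]{\rht}(b), third case). For odd $n$ this jump is $-I$ plus an exponentially small correction, which is certainly not near identity. Consequently the global parametrix must carry a jump on $I_t$ as well, namely $\ee^{n\pi\ii\sigma_3}$ (see \hyperref[rhn]{\rhn}(b)); your specification ``jump $\left(\begin{smallmatrix}0&1\\-1&0\end{smallmatrix}\right)$ on $J_t$'' omits this. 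This is precisely the mechanism by which the parity enters---your later remark about the filling fraction $\int_{J_{t,1}}\dd\mu_t=\tfrac12$ is correct and is exactly what produces the $\pi\ii$ in \eqref{g-jump}, but it manifests as a residual jump on $I_t$ that the parametrix must match, not merely as a shift in an abstract Jacobi inversion problem. In the paper the even-$n$ parametrix \eqref{N-even} has no jump on $I_t$ (since $\ee^{n\pi\ii}=1$), while for odd $n$ the required $-1$ jump on $I_t$ is supplied by $\Theta$ via \eqref{theta-jump}. Once you correct the description of the $I_t$ jump and include it in the model problem, the rest of your argument goes through as written.
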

\begin{remark}
    Note that the oddness of the function $P_{2n+1}(z;t, N)$ is reflected in the fact that the leading term in \eqref{p-asymp-outside} vanishes at $z = 0$ since $\Theta^{(0)}(z)$ possesses a simple pole there.
\end{remark}

Formulas similar to \eqref{p-asymp-outside} can easily be deduced in the interior of $J_t$, but since we do not use these we do not state them here. One immediate consequence of Theorem \ref{thm:asymptotics-poly} and \eqref{g-log-requirement} is that for all $t \in \Oo_2$ and $n$ large enough, $\deg P_n = n$. In view of the discussion at the beginning of Section \ref{sec:asymptotic-analysis-prelim} we have the following corollary. 

\begin{corollary}
    \label{cor:zero-free}
    Let $K \subset \Oo_2$ be a compact set and $\{N_n\}_{n\in \N}$ be as in the statement of Theorem \ref{p-asymp-outside}. Then, there exists $n_*\in \N$ large enough so that for all $n >n_*$, $H_n(t, N_n)$ is non-vanishing for $t\in K$, and $h_n(t, N_n), \gamma_{n}^2(t, N_n)$ are non-vanishing and bounded for $t \in K$.
\end{corollary}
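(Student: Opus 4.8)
The plan is to deduce the statement from Theorem \ref{thm:asymptotics-poly}, the algebraic identities \eqref{eq:normalizing}--\eqref{eq:gamma}, and the uniform error control furnished by the nonlinear steepest descent analysis of Section \ref{sec:rh}. Non-vanishing of the Hankel determinants is little more than the degree statement recorded just before the corollary; the genuinely quantitative ingredient is the uniform-in-$t$ asymptotics of $h_n$ and $\gamma_n^2$, which is what forces the bound to be independent of $n$.

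First I would establish that $H_{n-2}(t,N_n)$, $H_{n-1}(t,N_n)$ and $H_n(t,N_n)$ are all non-zero for $t\in K$ and $n$ large. Because the estimate in Theorem \ref{thm:asymptotics-poly} is $(z,t)$-locally uniform, the remark immediately following it (that $\deg P_n(z;t,N_n)=n$ eventually) holds with a threshold depending only on the compact $K$; by the discussion at the start of Section \ref{sec:asymptotic-analysis-prelim}, $\deg P_n(z;t,N_n)=n$ is equivalent to $H_{n-1}(t,N_n)\neq 0$. Applying the same conclusion to the shifted index sequences $m\mapsto N_{m-1}$ and $m\mapsto N_{m+1}$ --- both of which still satisfy the hypothesis of Theorem \ref{thm:asymptotics-poly}, since $|N_{m\mp 1}-m|<C+1$ --- gives $\deg P_{n+1}(z;t,N_n)=n+1$ and $\deg P_{n-1}(z;t,N_n)=n-1$, that is, $H_n(t,N_n)\neq 0$ and $H_{n-2}(t,N_n)\neq 0$, again uniformly for $t\in K$. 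Now \eqref{eq:normalizing} and \eqref{eq:gamma} give that $h_n(t,N_n)=H_n/H_{n-1}$, $h_{n-1}(t,N_n)=H_{n-1}/H_{n-2}$, and $\gamma_n^2(t,N_n)=h_n/h_{n-1}$ are finite and non-zero throughout $K$ for all $n$ past the threshold.

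For the uniform boundedness I would track $h_n$ and $\gamma_n^2$ through the steepest descent analysis. They are read off from the coefficient of $z^{-1}$ in the expansion of $\mathbf Y_n$ at infinity, and unwinding the chain of transformations $\mathbf Y_n\mapsto\cdots\mapsto\mathbf R_n$ of Section \ref{sec:rh} expresses that coefficient as the corresponding coefficient of the global parametrix --- an explicit expression built from the normalized Szeg\H{o} function $D(z;t)$, the period $\B(t)$, and the theta constants of $\RS$ --- plus an error that is $O(n^{-1})$ uniformly for $t\in K$. The parametrix coefficients are continuous in $t$ and non-vanishing on $\Oo_2$: one uses that $D(z;t)$ is finite and non-zero at infinity by \eqref{Szego-lim}, that $\Im\B(t)>0$ (so $\theta(\cdot\,;\B)$ is well controlled), and the location of the single zero and single pole of $\Theta$ established above; the only residual $n$-dependence, entering through $D^{N_n-n}$ and through theta values at lattice-translated arguments, stays in a fixed compact set because $|N_n-n|<C$. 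Restricting to $K$ then bounds the leading terms above and away from $0$ uniformly in $n$, and absorbing the uniform $O(n^{-1})$ error yields the asserted bounds for $n>n_*$. (This is exactly the content of the leading-order asymptotics of $h_n$ and $\gamma_n^2$ stated as Theorem \ref{thm:asymptotics-gamma-h}, so one may alternatively just quote that theorem once it is available.)

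The main obstacle is the uniformity in $t$ over $K$: one must verify that the whole steepest descent scheme of Section \ref{sec:rh} --- the opening of lenses along $J_t$, the construction of the global parametrix, the local (Airy) parametrices at the branch points $\pm a_2(t),\pm b_2(t)$, and the final small-norm problem for $\mathbf R_n$ --- runs with constants depending only on $K$. This reduces to checking that the branch points $\pm a_2(t),\pm b_2(t)$ stay bounded and mutually bounded apart, that the sign chart of $\mathcal U(z;t)$ and the admissible contour $\Ga_t$ of Theorem \ref{thm:2-cut} can be chosen to depend continuously on $t$, and that the global parametrix remains invertible, i.e. the relevant theta constants are bounded away from zero on $K$. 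Each of these is either part of Theorem \ref{thm:2-cut} (via \cite{BGM}) or follows from it by compactness of $K\subset\Oo_2$; turning them into genuinely uniform error estimates is the technical heart of the argument, and is precisely what Section \ref{sec:rh} is devoted to.
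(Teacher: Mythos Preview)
Your proposal is correct and matches the paper's approach: the paper gives no standalone proof, presenting the corollary as an immediate consequence of Theorem~\ref{thm:asymptotics-poly} (for the degree/Hankel non-vanishing) together with the asymptotic analysis of Section~\ref{sec:rh} (equivalently Theorem~\ref{thm:asymptotics-gamma-h}, for the boundedness and non-vanishing of $h_n,\gamma_n^2$). Your shifted-sequence argument for $H_n$ and $H_{n-2}$ and your discussion of uniformity in $t$ simply make explicit what the paper leaves implicit.
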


\begin{remark}
    The same boundedness and non-vanishing result is already known for $K \subset \Oo_1$; this follows directly from the asymptotic analysis in \cite{BGM}. In fact, motivated by Figure \ref{fig:polest}, one might suspect that the results can be extended to \emph{closed} subsets of $\Oo_1, \Oo_2$. An analogous story takes place in the cubic model, where boundedness on compact subsets of the one-cut region was shown in \cite{MR3607591} and upgraded to closed subsets in \cite{BBDY}.
\end{remark}
The asymptotic analysis in Section \ref{sec:rh} also yields asymptotic formulas for $\gamma_n^2(t,N), h_n(t,N)$, which are stated in the following theorem.
\begin{theorem}
    \label{thm:asymptotics-gamma-h}
    Let $t \in \Oo_2$, $\{N_n\}_{n \in \N}$ be as in Theorem \ref{thm:asymptotics-poly}, and $a_2(t), b_2(t)$ be as in \eqref{a-b-2-cut}. Then, 
    \begin{equation}
    \label{gamma-n-final}
    \gamma^2_n(t, N_n) =  \left\{  \begin{array}{ll}
    \dfrac{1}{4}(a_2(t) - b_2(t))^2, & n \in 2\N, \medskip \\  \dfrac{4}{(a_2(t) - b_2(t))^2} , & n \in \N \setminus 2\N.
    \end{array}\right \} + \Oo\left( n^{-1} \right) \qasq n\to \infty.
    \end{equation}
    Furthermore, with $\ell_*$ given by \eqref{ell}, we have
    \begin{equation}
    h_n(t, N_n) = 2\pi \ee^{N_n\ell_*} \left\{  \begin{array}{ll}
     \dfrac{1}{2}(a_2(t) - b_2(t)), & n \in 2\N, \medskip \\  \dfrac{2}{a_2(t) - b_2(t)} , & n \in \N \setminus 2\N.
    \end{array}\right\} + \Oo\left( n^{-1} \right) \qasq n\to \infty.
    \label{eq:hn-asymp-final}
    \end{equation}
    Finally, for all $n \in \N$ large enough, we have 
    \begin{align}
        \mf p_{2n, 2n - 2}(t, N_{2n}) &= {nt} +  \dfrac{(a_2 - b_2)^2}{8 } + \Oo \left( n^{-1} \right) \qasq n\to \infty, \label{eq:sub-leading-1}\\
    \mf p_{2n-1, 2n - 3}(t, N_{2n}) &= {nt} + \dfrac{3a_2^2 + 2a_2 b_2 +3b_2^2}{8} + \Oo\left( n^{-1} \right) \qasq n\to \infty. \label{eq:sub-leading-2}
    \end{align}
\end{theorem}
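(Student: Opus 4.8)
The plan is to extract all four quantities from the $z\to\infty$ expansion of the solution $\mathbf Y_n$ of \rhy. Writing $\mathbf Y_n(z)=\bigl(I+Y_1z^{-1}+Y_2z^{-2}+\Oo(z^{-3})\bigr)z^{n\sigma_3}$ and letting $[\,\cdot\,]_{ij}$ denote matrix entries, one has the standard identities $\gamma_n^2=[Y_1]_{12}[Y_1]_{21}$, $h_n=-2\pi\ii\,[Y_1]_{12}$, and $\mf p_{n,n-2}=[Y_2]_{11}$ (the last because $[Y_1]_{11}=\mf p_{n,n-1}=0$ by evenness of $V$, the middle one because monicity of $P_n$ and \eqref{eq:ortho} force $[Y_1]_{12}=-h_n/(2\pi\ii)$). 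The steepest-descent analysis of Section \ref{sec:rh} --- the same chain $\mathbf Y_n\to\dots\to\mathbf R_n$ used to prove Theorem \ref{thm:asymptotics-poly} --- produces, on the unbounded exterior component, a factorization of the form $\mathbf Y_n(z)=\mathcal E_n\,\mathbf R_n(z)\,\mathbf N_n(z)\,\ee^{ng(z)\sigma_3}D(z)^{(N_n-n)\sigma_3}\mathcal E_n^{-1}$, where $\mathbf N_n$ is the global parametrix (parity-dependent, normalized to $I$ at $\infty$), $\mathbf R_n(z)=I+\Oo(n^{-1})$ uniformly on $\overline{\C}$ with $\mathbf R_n(\infty)=I$ exactly, and $\mathcal E_n$ is a constant diagonal matrix carrying the exponential prefactors produced by the normalizing transformation and by the Szeg\H{o} normalization. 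Since $\mathcal E_n$ is diagonal it disappears from $[\,\cdot\,]_{11}$, so comparing with the leading term of \eqref{p-asymp-outside} yields the \emph{exact} identities $[N_n]_{11}(z)=A(z)$ for $n$ even and $[N_n]_{11}(z)=A(z)\,\Theta^{(0)}(\infty)/\Theta^{(0)}(z)$ for $n$ odd (exact, not just asymptotic, since the left-hand side is constant in $n$ within a parity class while the error in \eqref{p-asymp-outside} is $o(1)$).

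First I would record the $z\to\infty$ expansions of the building blocks. From \eqref{g-def}, \eqref{em5}, and the explicit $Q$ in \eqref{Q-two-cut}, $g(z)=\log z-\tfrac12 m_2(t)\,z^{-2}+\Oo(z^{-4})$, where $m_2(t)=\int x^2\,\dd\mu_t(x)$ is the coefficient of $z^{-3}$ in $g'=\tfrac12 V'-Q^{1/2}$; expanding $Q^{1/2}=\tfrac12 zR^{1/2}$ through \eqref{Q-R} gives $m_2(t)=\tfrac1{32}(a_2^2+b_2^2)(a_2^2-b_2^2)^2=-t$ (odd moments vanish by symmetry of $\mu_t$). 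By Proposition \ref{Szego-prop}, $D(z)^{N_n-n}=1+\Oo(z^{-4})$, so the Szeg\H{o} factor is silent at orders $z^{-1},z^{-2},z^{-3}$, and the entire $n$-dependence of $Y_1,Y_2$ at the relevant orders comes from $\ee^{ng(z)\sigma_3}$. From \eqref{gamma}--\eqref{A-B-def} one gets $A(z)=1+\tfrac{(a_2-b_2)^2}{8z^2}+\Oo(z^{-3})$ and $B(z)=\tfrac{a_2-b_2}{2\ii z}+\Oo(z^{-3})$, while from the description of $\Theta$ through $\A$ and $\theta$, together with the symmetry identity \eqref{eq:abel-zero-identity}, one obtains $\Theta^{(0)}(\infty)/\Theta^{(0)}(z)=1+\tfrac12 a_2b_2\,z^{-2}+\Oo(z^{-3})$ (with no $z^{-1}$ term, which is also forced by evenness of $P_n$). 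In particular the coefficient of $z^{-2}$ in $[N_n]_{11}$ is $\tfrac{(a_2-b_2)^2}{8}$ for $n$ even and $\tfrac{(a_2+b_2)^2}{8}$ for $n$ odd.

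Then I would assemble the product. Because $\mathbf R_n(\infty)=I$ exactly, the $z^0$-coefficient of $[\mathbf R_n\mathbf N_n]_{11}$ is unperturbed (equal to $1$), so multiplication by $\ee^{ng(z)}=z^n\bigl(1-\tfrac12 n m_2(t)\,z^{-2}+\cdots\bigr)$ does not amplify the $\Oo(n^{-1})$ error carried by $\mathbf R_n$; reading off the coefficient of $z^{-2}$ gives $[Y_2]_{11}=-\tfrac12(\deg P_n)\,m_2(t)+(\text{coefficient of }z^{-2}\text{ in }[N_n]_{11})+\Oo(n^{-1})$, which with $\deg P_{2n}=2n$ is \eqref{eq:sub-leading-1} and with $\deg P_{2n-1}=2n-1$ becomes \eqref{eq:sub-leading-2} after using $a_2^2+b_2^2=-2t$ (from \eqref{a-b-2-cut}). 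For $\gamma_n^2$ and $h_n$, the leading contributions are the coefficients of $z^{-1}$ in $[N_n]_{12}$ and $[N_n]_{21}$, which by the above are proportional to $a_2-b_2$ (for odd $n$ the parametrix swaps the two sheets of $\RS$, which inverts the product and produces the reciprocal $4/(a_2-b_2)^2$), while the conjugation by $\mathcal E_n$ supplies the $\ee^{N_n\ell_*}$ prefactor in \eqref{eq:hn-asymp-final} --- here one invokes the identity \eqref{ell} relating $\ell_*$ to the equilibrium Lagrange multiplier. In all cases the $\mathbf R_n$-corrections enter at order $n^{-1}$, giving the stated errors; note that $\ee^{N_n\ell_*}$ cancels in $\gamma_n^2(t,N_n)=h_n(t,N_n)/h_{n-1}(t,N_n)$, consistently with \eqref{gamma-n-final}.

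The main obstacle I anticipate is bookkeeping rather than anything conceptual: (i) propagating the constant diagonal factor $\mathcal E_n$ through every transformation to pin down the precise $\ee^{N_n\ell_*}$ prefactor and the exact numerical constants (the $\tfrac12(a_2-b_2)$ versus $\tfrac2{a_2-b_2}$ in \eqref{eq:hn-asymp-final}, and the factors of $2\pi$); and (ii) the theta-function computation producing the second-order expansion of $\Theta^{(0)}(\infty)/\Theta^{(0)}(z)$ at infinity, which rests on \eqref{eq:abel-zero-identity} and on a careful local expansion of $\A$ near the two points over $z=\infty$. The analytic input --- that $\mathbf R_n=I+\Oo(n^{-1})$ uniformly, including at $z=\infty$ --- is already available from the proof of Theorem \ref{thm:asymptotics-poly}.
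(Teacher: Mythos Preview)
Your overall strategy --- read off $h_n$, $\gamma_n^2$, and the subleading coefficients from the $z\to\infty$ expansion of $\mathbf Y_n$, then replace $\mathbf Y_n$ by $\mathbf R_n\mathbf N_n$ near infinity and use $\mathbf R_n=\mathbb I+\Oo(n^{-1})$ --- is exactly what the paper does in Section~\ref{sec:gamma-asymp}. The even-$n$ computations and the expansion $g(z)=\log z+\tfrac{t}{2}z^{-2}+\Oo(z^{-4})$ are all correct.

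There is, however, a genuine gap in the odd-$n$ case. Your phrase ``the parametrix swaps the two sheets of $\RS$, which inverts the product'' is not a mechanism. For $n$ odd, the off-diagonal entries of $\mathbf N_1$ come out as $\tfrac{a_2-b_2}{2\ii}\cdot\Theta^{(0)}(\infty)/\Theta^{(1)}(\infty)$ (see the explicit formula \eqref{N-1-odd}), and turning this into $4/(a_2-b_2)^2$ requires the theta identity
\[
\dfrac{\Theta^{(0)}(\infty)}{\Theta^{(1)}(\infty)}=\dfrac{4}{(a_2-b_2)^2},
\]
which the paper isolates as Lemma~\ref{lemma:theta-id}. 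Its proof is not a local expansion of the Abel map: one constructs a rational function on $\RS$ with divisor $0^{(1)}+\infty^{(1)}-0^{(0)}-\infty^{(0)}$ in two ways (once via $A/B$, once via $\mathcal D^{-1}\ee^{(V+\eta)/2}\Theta$) and compares leading coefficients at $\infty^{(0)}$ and $\infty^{(1)}$. Without this identity your odd-$n$ formulas for $\gamma_n^2$ and $h_n$ remain expressed in unresolved theta values.

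The same issue affects your route to $\mf p_{2n-1,2n-3}$. Your claimed expansion $\Theta^{(0)}(\infty)/\Theta^{(0)}(z)=1+\tfrac12 a_2b_2\,z^{-2}+\Oo(z^{-3})$ is in fact correct, but deriving it from ``\eqref{eq:abel-zero-identity} and a local expansion of $\A$'' alone leaves you with theta-function derivatives rather than $a_2b_2/2$; the clean way to get it is again through the rational-function identity behind Lemma~\ref{lemma:theta-id} (write $1/\Theta^{(0)}(z)=C\,G(z^{(0)})B(z)/A(z)$ and expand). The paper sidesteps this entirely: it reads $\mf p_{2n-1,2n-3}$ off the second row of the \emph{even}-degree matrix $\mathbf Y_{2n}$ via $\mf p_{n-1,n-3}=[\mathbf T_3]_{21}/[\mathbf T_1]_{21}+\tfrac{nt}{2}$, so only the theta-free parametrix \eqref{N-even} and the expansion \eqref{A-B-expansion} are needed. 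This also explains the parameter $N_{2n}$ (rather than $N_{2n-1}$) in \eqref{eq:sub-leading-2}: the second row of $\mathbf Y_{2n}$ carries $P_{2n-1}(z;t,N_{2n})$.
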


\begin{remark}
A particular instance of \eqref{gamma-n-final} has already appeared in the literature; when $t \in (-\infty, -2)$, \eqref{gamma-n-final} reduces to
\begin{equation*}
\label{gamma-n-alg}
\gamma^2_n(t, N_n) = \left \{ \begin{array}{ll}\dfrac{- t - \sqrt{t^2 - 4 }}{2}, & n \in 2\N, \medskip \\ \dfrac{- t + \sqrt{t^2 - 4 }}{2}, & n \in \N \setminus 2\N, \end{array}\right\} +  \Oo\left( n^{-1} \right) \qasq n\to \infty,
\end{equation*}
which agrees with \cite{MR1715324}*{Theorem 1.1} for all $n \in \N$ large enough. 
\end{remark}
Next, we will use Theorem \ref{thm:asymptotics-gamma-h} to obtain asymptotics of special function solutions of Painlev\'e-IV. 
\section{Application to Painlev\'e-IV}
\label{sec:p4}
To state the consequences of Theorem \ref{thm:asymptotics-gamma-h} in terms of solutions to Painlev\'e-IV, we recall two useful reformulations.
\subsection{Symmetric form}
To state our results, we will need to recall two reformulations of \eqref{eq:p4-ab}. The first is as a symmetric system which first appeared in \cite{MR1251164} and has since been used in various works; see e.g. \cites{Adler, N, FW}. We will follow the notation of \cite{FW}.
\begin{theorem}[\cite{MR1251164}]
    Let $\{\alpha_j\}_{j = 0}^2 \subset \C$ be arbitrary constants satisfying 
    \begin{equation}
        \alpha_0 + \alpha_1 + \alpha_2 = 1.
        \label{eq:alpha-plane}
    \end{equation}
    The system 
    \begin{equation}
    \begin{aligned}
        \dod{f_0}{x} &= f_0(f_1 - f_2) + 2\alpha_0, \\
        \dod{f_1}{x} &= f_1(f_2 - f_0) + 2\alpha_1,\\
        \dod{f_2}{x} &= f_2(f_0 - f_1) + 2\alpha_2,
    \end{aligned}
    \label{eq:p4-sym}
    \tag{$\psym{4}$}
    \end{equation}
    subject to the constraint 
    \begin{equation}
        f_0(x) + f_1(x) + f_2(x) = 2x,
        \label{eq:p4-sym-constraint}
    \end{equation}
    is equivalent to \eqref{eq:p4-ab}. That is, for $j = 0, 1, 2$, $u(x) = -f_j(x)$ solves \eqref{eq:p4-ab} with parameters\footnote{Here, the indices are understood cyclically, i.e. $\alpha_3 \equiv \alpha_0$.} 
    \begin{equation*}
        \Theta_\infty= \frac{1}{2}\left(\alpha_{j - 1} - \alpha_{j + 1} +1\right), \quad \Theta_0^2 = \frac{1}{4}\alpha_j^2.
        \label{eq:a-b}
    \end{equation*}
    Furthermore, given a solution of \eqref{eq:p4-ab}, $(u(x), \Theta_0, \Theta_\infty)$, there exists a triple $(f_0(x), f_1(x), f_2(x))$ and parameters $(\alpha_0, \alpha_1, \alpha_2)$ satisfying \eqref{eq:p4-sym} such that $u(x) = -f_1(x)$.
    \label{thm:sym}
\end{theorem}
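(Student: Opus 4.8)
The plan is to prove both implications of Theorem~\ref{thm:sym} by direct substitution, reducing the three-component system \eqref{eq:p4-sym} together with the constraint \eqref{eq:p4-sym-constraint} to the scalar equation $(\p{4})$ and back. Since \eqref{eq:p4-sym} is invariant under simultaneously cycling the triples $(f_0,f_1,f_2)$ and $(\alpha_0,\alpha_1,\alpha_2)$, it is enough to treat the index $j=1$; the cases $j=0,2$ then follow by relabelling.

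For the forward implication I would set $u:=-f_1$ and first eliminate $f_0$ and $f_2$: the second equation of \eqref{eq:p4-sym} gives $f_2-f_0=(u'+2\alpha_1)/u$, while \eqref{eq:p4-sym-constraint} gives $f_0+f_2=2x+u$, so both $f_0$ and $f_2$ (and the product $f_0f_2=\tfrac14[(f_0+f_2)^2-(f_2-f_0)^2]$) become explicit rational functions of $u$ and $u'$. I would then differentiate the $f_1$-equation once more, replace $f_0'$ and $f_2'$ by the right-hand sides of the other two equations of \eqref{eq:p4-sym}, and substitute. Simplification should collapse everything to a single second-order ODE for $u$; I expect it to coincide with $(\p{4})$ once one reads off $\Theta_\infty=\tfrac12(\alpha_0-\alpha_2+1)$ and $\Theta_0^2=\tfrac14\alpha_1^2$, the only non-obvious step being the cancellation $\tfrac{u'(u'+2\alpha_1)}{u}-\tfrac{(u'+2\alpha_1)^2}{2u}=\tfrac{u'^2-4\alpha_1^2}{2u}$, which produces simultaneously the $u'^2/(2u)$ term and the $-8\Theta_0^2/u$ term of $(\p{4})$.

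For the converse I would run this computation in reverse. Given a solution $(u,\Theta_0,\Theta_\infty)$ of $(\p{4})$, choose $\alpha_1$ with $\alpha_1^2=4\Theta_0^2$ and set $\alpha_0:=1-\Theta_\infty-\tfrac12\alpha_1$ and $\alpha_2:=\Theta_\infty-\tfrac12\alpha_1$, so that \eqref{eq:alpha-plane} holds and $\alpha_2-\alpha_0=1-2\Theta_\infty$; then define $f_1:=-u$ together with
\[
f_0:=x+\frac{u}{2}-\frac{u'+2\alpha_1}{2u},\qquad f_2:=x+\frac{u}{2}+\frac{u'+2\alpha_1}{2u}.
\]
By construction \eqref{eq:p4-sym-constraint} holds, and a one-line check shows that the $f_1$-equation of \eqref{eq:p4-sym} is satisfied identically. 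Summing the three equations of \eqref{eq:p4-sym} and using \eqref{eq:alpha-plane} shows that, given \eqref{eq:p4-sym-constraint} and the $f_1$-equation, the $f_0$- and $f_2$-equations are equivalent to one another, so it remains only to verify the $f_2$-equation — and that verification is exactly the forward computation read backwards, hence holds precisely because $u$ solves $(\p{4})$. This produces the desired triple with $u=-f_1$.

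The individual manipulations are elementary, so the only real obstacle I anticipate is organizational: keeping the (somewhat lengthy) polynomial-and-rational algebra under control so that the $u^3$, $xu^2$, $x^2u$, $u$, and $u^{-1}$ contributions assemble cleanly into $(\p{4})$, and so that the parameter dictionary comes out manifestly identical in both directions. I would also remark in passing that the $f_j$ built in the converse are meromorphic functions (with at worst poles at the zeros of $u$) and that all the identities above are identities of meromorphic functions, so the stated equivalence is genuinely a bijection-like correspondence between solutions.
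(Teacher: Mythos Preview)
The paper does not supply its own proof of this theorem; it is stated with attribution to \cite{MR1251164} and used as background. Your direct-substitution strategy is exactly the standard way to verify this equivalence, and the forward computation you outline (eliminate $f_0,f_2$ via the $f_1$-equation and the constraint, differentiate once more, use the remaining two equations of \eqref{eq:p4-sym} to replace $f_2'-f_0'$, and collect terms) is correct and matches the parameter dictionary in the statement.

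There is one small slip in your converse: with your choices $\alpha_0=1-\Theta_\infty-\tfrac12\alpha_1$ and $\alpha_2=\Theta_\infty-\tfrac12\alpha_1$ one gets $\alpha_2-\alpha_0=2\Theta_\infty-1$, not $1-2\Theta_\infty$; this is the wrong sign for the coefficient of $u$ to match $(\p4)$. Swapping the roles of $\alpha_0$ and $\alpha_2$ (i.e.\ taking $\alpha_0=\Theta_\infty-\tfrac12\alpha_1$ and $\alpha_2=1-\Theta_\infty-\tfrac12\alpha_1$) fixes this and is consistent with $\Theta_\infty=\tfrac12(\alpha_0-\alpha_2+1)$. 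With that correction the rest of your argument goes through unchanged.
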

It is clear that permuting the indices of $(f_0(x), f_1(x), f_2(x))$ maps a triple which solves \eqref{eq:p4-sym} to another triple which solves the same system but with correspondingly permuted parameters $\alpha_j$. Less trivially, the following was observed in \cite{MR1251169}.
\begin{proposition}[\cite{MR1251169}]
    Let $\{\alpha_j\}_{j = 0}^2$ and $\{f_j(x)\}_{j = 0}^2$ be as above. Define the Cartan matrix $\mb A$ and orientation matrix $\mb U$ by
    \begin{equation*}
        \mb A = [a_{ij}]_{i, j = 0}^2 = \left[\begin{array}{rrr}
            2 & -1 & -1\\ -1 & 2 & -1 \\ -1 & -1 & 2
        \end{array} \right], \quad \mb U = [u_{ij}]_{i, j = 0}^2= \left[\begin{array}{rrr}
            0 & 1 & -1\\ -1 & 0 & 1 \\ 1 & -1 & 0
        \end{array} \right], 
        \label{eq:cartan}
    \end{equation*}
    and consider the functions given by transformations
    \begin{equation*}
       s_i(f_j(x)) = f_j(x) + u_{ij}\dfrac{2\alpha_i}{f_i(x)}, \quad i, j \in \{0, 1, 2\}.
    \end{equation*}
    Then, the triples $s_i(f_0(x), f_1(x), f_2(x)) := (s_i(f_0(x)), s_i(f_1(x)), s_i(f_2(x)))$ satisfy \eqref{eq:p4-sym-constraint} and solve \eqref{eq:p4-sym} with parameters \sloppy $(s_i(\alpha_0), s_i(\alpha_1), s_i(\alpha_2))$ given by\footnote{We slightly abuse notation by using $s_i$ to denote the transformation of both the functions $f_j$ and parameters $\alpha_j$.\label{footnote:notation-abuse}} 
    \begin{equation*}
        s_i(\alpha_j) := \alpha_j - a_{ij} \alpha_i.
    \end{equation*}
\end{proposition}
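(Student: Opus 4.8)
The plan is a direct substitution, with the cyclic symmetry of the set-up used to cut down the number of cases. First I would record the structural symmetry: the Cartan matrix $\mb A$ is invariant under simultaneous permutation of its rows and columns, and the orientation matrix $\mb U$ is invariant under \emph{cyclic} permutations of $\{0,1,2\}$ (a transposition would reverse its sign). Combined with the elementary fact, noted immediately before the proposition, that the cyclic relabelling $f_j(x)\mapsto f_{j+1}(x)$, $\alpha_j\mapsto\alpha_{j+1}$ carries a solution of \eqref{eq:p4-sym} to a solution, this reduces the claim to the single transformation $s_0$; the cases $i=1,2$ then follow by relabelling.

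For $i=0$ one has $u_{00}=0$, $u_{01}=1$, $u_{02}=-1$, so $\widetilde f_0:=s_0(f_0)=f_0$, $\widetilde f_1:=s_0(f_1)=f_1+2\alpha_0/f_0$, $\widetilde f_2:=s_0(f_2)=f_2-2\alpha_0/f_0$, and $a_{00}=2$, $a_{01}=a_{02}=-1$ give $\widetilde\alpha_0=-\alpha_0$, $\widetilde\alpha_1=\alpha_1+\alpha_0$, $\widetilde\alpha_2=\alpha_2+\alpha_0$. The constraint is immediate: $\widetilde f_0+\widetilde f_1+\widetilde f_2=f_0+f_1+f_2=2x$, and likewise $\widetilde\alpha_0+\widetilde\alpha_1+\widetilde\alpha_2=\alpha_0+\alpha_1+\alpha_2=1$, so \eqref{eq:alpha-plane} is preserved and the new triple is a legitimate instance of the system. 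To check the first equation of \eqref{eq:p4-sym} I would compute $\widetilde f_0(\widetilde f_1-\widetilde f_2)+2\widetilde\alpha_0=f_0(f_1-f_2)+4\alpha_0-2\alpha_0=f_0'=\widetilde f_0'$, using the original first equation. For the second, I would differentiate $\widetilde f_1=f_1+2\alpha_0/f_0$ and substitute $f_0'=f_0(f_1-f_2)+2\alpha_0$ and $f_1'=f_1(f_2-f_0)+2\alpha_1$ into $\widetilde f_1'=f_1'-2\alpha_0 f_0'/f_0^2$, and separately expand $\widetilde f_1(\widetilde f_2-\widetilde f_0)+2\widetilde\alpha_1=(f_1+2\alpha_0/f_0)(f_2-f_0-2\alpha_0/f_0)+2(\alpha_1+\alpha_0)$; both sides reduce to $f_1(f_2-f_0)+2\alpha_1-2\alpha_0(f_1-f_2)/f_0-4\alpha_0^2/f_0^2$. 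The third equation is verified identically, or obtained by subtracting the first two from the differentiated constraint.

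I do not expect a genuine obstacle here: the proposition is essentially the verification underlying the affine Weyl group action on $(\psym{4})$, and the content is purely algebraic. The only point requiring care is bookkeeping — keeping the cyclic indices consistent and not dropping the cross terms or the $4\alpha_0^2/f_0^2$ contribution when expanding the products. It may be worth remarking for the reader that $s_i$ is defined only away from the (discrete) zero set of $f_i$ and extends meromorphically, in the spirit of the meromorphic-continuation comments made earlier in the paper.
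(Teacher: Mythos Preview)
Your verification is correct. Note, however, that the paper does not supply its own proof of this proposition: it is stated with attribution to \cite{MR1251169} and used as input, so there is nothing in the paper to compare against. Your direct-substitution argument, reduced to the case $i=0$ via the cyclic symmetry of \eqref{eq:p4-sym}, is the standard way to check such B\"acklund transformations and is entirely adequate.
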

To construct families of special solutions, we will use the following transformations which were introduced in \cite{NY}: 
\begin{equation*}
    T_1 := \pi s_2 s_1 , \quad T_2 := s_1 \pi s_2 , \quad T_3 :=  s_2 s_1\pi,
    \label{eq:T-def}
\end{equation*}
where $\pi$ denotes the shift
\begin{equation*}
    \pi(f_j(x)) := f_{j-1}(x), \quad \pi(\alpha_j) = \alpha_{j-1}.
\end{equation*}
\begin{table}
    \centering
    \begin{tabular}{c|ccc}
         & $\alpha_0$ & $\alpha_1$ & $\alpha_2$ \\ \hline
        $T_1$ & $\alpha_0 - 1$ & $\alpha_1 + 1$ & $\alpha_2$ \\
        $T_2$ & $\alpha_0$ & $\alpha_1 - 1$ & $\alpha_2 + 1$ \\
        $T_3$ & $\alpha_0 + 1$ & $\alpha_1$ & $\alpha_2 - 1$\\
    \end{tabular}
    \caption{Action of $T_i$ on parameters $\alpha_j$.}
    \label{tab:T-parameters}
\end{table}
Transformations $T_j$ map triples $(f_0(x), f_1(x), f_2(x))$ which solve \eqref{eq:p4-sym} and satisfy \eqref{eq:p4-sym-constraint} to triples which again satisfy \eqref{eq:p4-sym} and \eqref{eq:p4-sym-constraint}. It is easy to check that these transformations commute, and their effect on the parameters $\alpha_j$ is summarized in Table \ref{tab:T-parameters}. To give the reader a sense of what these iterates look like, we find it instructive to include the following example.
\begin{example}
    Given a triple $(f_0(x), f_1(x), f_2(x))$ which solves \eqref{eq:p4-sym}, a direct calculation yields 
    \begin{multline}
         T_1(f_0(x), f_1(x), f_2(x)) \\ =\left( f_{2}(x) + \dfrac{2 \alpha_1}{f_1(x)}, f_0(x) - \dfrac{2\alpha_1}{f_1(x)} + \dfrac{2(\alpha_1 + \alpha_2)f_1(x)}{(f_1f_2)(x) + 2\alpha_1}, f_1(x) -  \dfrac{2(\alpha_1 + \alpha_2)f_1(x)}{(f_1f_2)(x) + 2\alpha_1} \right),
         \label{eq:T1-f}
    \end{multline}
which solves \eqref{eq:p4-sym} with parameters $(\alpha_0 - 1, \alpha_1 + 1, \alpha_2)$. In a similar fashion, one can check that 
\begin{multline*}
    T_1^{-1}(f_0(x), f_1(x), f_2(x)) \\
    = \left( f_1(x) + \dfrac{2\alpha_0}{f_0(x)} - \dfrac{2(\alpha_0 + \alpha_2)f_0(x)}{(f_0f_2)(x) - 2\alpha_0}, f_2(x) - \dfrac{2\alpha_0}{f_0(x)}, f_0(x) + \dfrac{2(\alpha_0 + \alpha_2)f_0(x)}{(f_0f_2)(x) - 2\alpha_0} \right)
    \label{eq:T1-inv}
\end{multline*}
solves \eqref{eq:p4-sym} with parameters $(\alpha_0 + 1, \alpha_1 - 1, \alpha_2)$. 
\end{example}

\begin{remark} \label{remark:indeterminacy}
    It can occur that either $f_1(x) = 0$ or $(f_1f_2)(x) + 2\alpha_1 = 0$ holds identically, which causes an indeterminacy in \eqref{eq:T1-f}. We highlight that the requirement that $T_1(f_0, f_1, f_2)$ be determinate is really a condition on $f_1$. Indeed, given that $(f_0, f_1, f_2)$ satisfies \eqref{eq:p4-sym} and \eqref{eq:p4-sym-constraint}, $f_2$ is given in terms of $f_1$ as
    \begin{equation}
    f_2(x) = \dfrac{1}{2f_1(x)}\left(-2\alpha_1 + 2xf_1(x) - f_1^2(x) + f_1'(x) \right).
        \label{eq:f-2}
    \end{equation}
    Hence, for $T_1(f_0, f_1, f_2)$ to be determinate we must require $f_1(x) \not \equiv 0$ and  
    \begin{equation}
     f_1'(x) - f_1^2(x) + 2xf_1(x) +2\alpha_1 \not \equiv 0.
    \label{eq:T1-det} 
    \end{equation}
    In the following section we will check (see Proposition \ref{prop:determinate} below) that the particular family of solutions we study in this paper is determinate.
\end{remark}

\subsection{Hamiltonian form}
The second reformulation of \eqref{eq:p4-ab} we use is as a Hamiltonian system:
    \begin{align}
    \dod{q}{x} = \dpd{H}{p},  \quad  \dod{p}{x} = - \dpd{H}{q}, 
    \label{eq:H-system}
    \end{align}
Each of the Painlev\'e equations admit such a representation with a polynomial Hamiltonian, see e.g. \cite{MR0581468}. In the case of \eqref{eq:p4-ab} the Hamiltonian is given by \cite{FW}*{Equation (2.13)}
\begin{equation}
    \begin{aligned}
    H_{IV}(x; p,q; \alpha_1, \alpha_2) = (2p - q - 2x)pq - 2\alpha_1p - \alpha_2 q.
    \label{eq:H}
    \end{aligned}
\end{equation}
It is a simple calculation to verify that, for a given $\alpha_1, \alpha_2$, if $p(x), q(x)$ satisfy \eqref{eq:H-system} with Hamiltonian \eqref{eq:H} and $\alpha_0$ is defined by \eqref{eq:alpha-plane}, then setting
\begin{equation*}
    f_1(x) = -q(x), \quad f_2(x) = 2p(x), \quad f_0(x) = 2x +q(x) - 2p(x),
    \label{eq:q-p-f}
\end{equation*}
yields a solution of \eqref{eq:p4-sym} which satisfies constraint \eqref{eq:p4-sym-constraint}. The Hamiltonian evaluated at such a solution defines the \emph{Jimbo-Miwa-Okamoto $\sigma$-function}, i.e.
\begin{equation}
    \sigma(x) := H(x; p(x), q(x); \alpha_1, \alpha_2) = \frac12 (f_0f_1f_2)(x) + \alpha_2 f_1(x) - \alpha_1f_2(x).
    \label{eq:sigma-f}
\end{equation}
Function \eqref{eq:sigma-f} is known to satisfy the so-called $\sigma$-form of Painlev\'e-IV (see e.g. \cite{FW}*{Proposition 4}) given by 
\begin{equation*}
    \left(\dod[2]{\sigma}{x}\right)^2 - 4\left(x \dod{\sigma}{x} - \sigma(x)\right) + 4\dod{\sigma}{x} \left(\dod{\sigma}{x} + 2\alpha_1\right)\left(\dod{\sigma}{x} - 2\alpha_2\right) = 0.
    \label{eq:p4-sigma}
\end{equation*}
Solutions of \eqref{eq:p4-ab} are known to be meromorphic functions in $\C$ \cite{GLS}*{Theorem 4.1} and in view of \eqref{eq:sigma-f}, it follows that $\sigma(x)$ is a meromorphic function in $\C$ as well. A simple local analysis of \eqref{eq:p4-sigma} implies that poles of $\sigma(x)$ are all simple and with residue $1$. Therefore, the \emph{Jimbo-Miwa-Okamoto $\tau$-function}, defined (up to a constant) by 
\begin{equation}
    \sigma(x) =: \dod{}{x} \log \tau(x),
    \label{eq:tau-def}
\end{equation}
is an entire function with at most simple zeros. In the sequel, we will be interested in iteratively applying transformation $T_j$ to a ``seed" triple $((f_0)_{0,0,0}(x), (f_1)_{0,0,0}(x), (f_2)_{0,0,0}(x))$ and recording the resulting $\sigma$- and $\tau$-functions. To this end, we will use the notation 
\begin{equation}
    T_3^kT_2^mT_1^{n}(K_{0,0,0}(x)) = K_{m, n, k}(x), \quad K \in \{f_0, f_1, f_2, \sigma, \tau\}.
    \label{eq:iterate-notation}
\end{equation}
The constant of integration in \eqref{eq:tau-def} can be chosen so that the well-known Toda equations hold. 
\begin{proposition}[\cite{O}, see also \cite{FW}*{Proposition 5}] \label{prop:toda}
    Let $\tau_{0,0,0}(x)$ be an arbitrary $\tau$-function defined by \eqref{eq:tau-def} and chosen so that for a given $n, m, k \in \Z$, the functions $\tau_{n, m, k}(x), \tau_{n \pm 1, m, k}(x)$ are well-defined. Then the functions $ \tilde{\tau}_{n,m,k}(x) := \ee^{x^2(\alpha_1 + n)} \tau_{n,m,k}(x)$ satisfy the Toda equation:
    \begin{equation}
        \dod[2]{}{x}\log \tilde{\tau}_{n,m,k}(x) = \dfrac{\tilde{\tau}_{n+1,m,k}(x)\tilde{\tau}_{n-1,m,k}(x)}{\tilde{\tau}^2_{n,m,k}(x)}.
        \label{eq:toda}
    \end{equation}
    Furthermore, if $\tau_{n, m\pm 1, k}(x)$ are well-defined, then there exists $K_{n, m, k} \in \C \setminus \{0\}$ such that 
    \begin{equation*}
        \dod[2]{}{x}\log {\tau}_{n,m,k}(x) = K_{n,m,k}\dfrac{{\tau}_{n,m+1,k}(x){\tau}_{n,m-1,k}(x)}{{\tau}^2_{n,m,k}(x)}.
        \label{eq:toda-T2}
    \end{equation*}
\end{proposition}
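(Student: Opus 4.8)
The plan is to reduce both Toda equations to algebraic identities for the $\sigma$-functions that can be read off from the explicit B\"acklund formula \eqref{eq:T1-f}. To lighten notation, fix a lattice point, drop these subscripts, and write $\sigma = (\log\tau_{n,m,k})'$, with $(f_0,f_1,f_2)$ and $(\alpha_0,\alpha_1,\alpha_2)$ denoting the triple and parameters at this level (so that this $\alpha_1$ is the $\alpha_1+n$ of the proposition, by Table \ref{tab:T-parameters}). Two preliminary observations. First, differentiating \eqref{eq:sigma-f} and inserting \eqref{eq:p4-sym}, the part of $\tfrac12(f_0f_1f_2)'$ proportional to $f_0f_1f_2$ telescopes to zero and, after using \eqref{eq:alpha-plane}, one is left with the clean identity $\sigma' = (\alpha_0+\alpha_1+\alpha_2)f_1f_2 = f_1f_2$ (equivalently $\sigma' = -2pq$, since $\sigma$ is the Hamiltonian \eqref{eq:H} evaluated along the flow \eqref{eq:H-system}). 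Second, the Gaussian prefactor in $\tilde\tau$ contributes exactly $2(\alpha_1+n)$ to the second logarithmic derivative and cancels out of the quotient, so
\begin{equation*}
\dod[2]{}{x}\log\tilde\tau_{n,m,k} = 2(\alpha_1+n) + f_1f_2,\qquad \frac{\tilde\tau_{n+1,m,k}\,\tilde\tau_{n-1,m,k}}{\tilde\tau_{n,m,k}^{2}} = \frac{\tau_{n+1,m,k}\,\tau_{n-1,m,k}}{\tau_{n,m,k}^{2}}.
\end{equation*}
Hence the first Toda equation is equivalent to the identity $\tau_{n+1,m,k}\tau_{n-1,m,k}/\tau_{n,m,k}^2 = f_1f_2 + 2(\alpha_1+n)$.

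To establish that, I would differentiate its logarithm: since $\sigma_{n,m,k} = (\log\tau_{n,m,k})'$ this equals $\sigma_{n+1,m,k}+\sigma_{n-1,m,k}-2\sigma_{n,m,k}$, and the key input is the single identity
\begin{equation*}
\sigma_{n+1,m,k} - \sigma_{n,m,k} = -(f_2)_{n+1,m,k}.
\end{equation*}
One proves it by substituting $T_1(f_0,f_1,f_2)$ from \eqref{eq:T1-f}, with parameters $(\alpha_0-1,\alpha_1+1,\alpha_2)$, into \eqref{eq:sigma-f}: setting $E := f_1f_2 + 2\alpha_1$, the first and third components of $T_1(f_0,f_1,f_2)$ are $E/f_1$ and $f_1(f_1f_2-2\alpha_2)/E$, so their product is $f_1f_2-2\alpha_2$, the cubic term in \eqref{eq:sigma-f} collapses, and systematic use of $\alpha_0+\alpha_1+\alpha_2=1$ and $f_0+f_1+f_2=2x$ reduces the remaining rational expression to $-(f_2)_{n+1,m,k} = -f_1(f_1f_2-2\alpha_2)/E$. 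Replacing $n$ by $n-1$ in this identity gives $\sigma_{n-1,m,k}-\sigma_{n,m,k} = (f_2)_{n,m,k}$, whence $\sigma_{n+1,m,k}+\sigma_{n-1,m,k}-2\sigma_{n,m,k} = (f_2)_{n,m,k}-(f_2)_{n+1,m,k}$; a one-line simplification using $(f_1f_2)' = -f_1f_2(f_1-f_2)+2(\alpha_1f_2+\alpha_2f_1)$ (again from \eqref{eq:p4-sym}) rewrites the right-hand side as $\dod{}{x}\log\big(f_1f_2 + 2(\alpha_1+n)\big)$. Integrating, $\tau_{n+1,m,k}\tau_{n-1,m,k}/\tau_{n,m,k}^2 = c\,(f_1f_2+2(\alpha_1+n))$ for some $c\in\C\setminus\{0\}$ (nonzero because $f_1f_2$ is non-constant); since the constant in \eqref{eq:tau-def} is free, we normalize $c=1$, and the first Toda equation follows from the two preliminary observations.

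The second Toda equation is obtained by the same recipe applied to $T_2 = s_1\pi s_2$ instead of $T_1$. The only structural difference is that $T_2$ shifts $\alpha_1$ and $\alpha_2$ in opposite directions (Table \ref{tab:T-parameters}), so that the additive constant now cancels and no Gaussian twist is needed: one finds $\sigma_{n,m+1,k}+\sigma_{n,m-1,k}-2\sigma_{n,m,k} = \dod{}{x}\log\big((f_1f_2)_{n,m,k}\big)$, hence $\tau_{n,m+1,k}\tau_{n,m-1,k}/\tau_{n,m,k}^2 = K_{n,m,k}^{-1}(f_1f_2)_{n,m,k} = K_{n,m,k}^{-1}\dod[2]{}{x}\log\tau_{n,m,k}$ for a nonzero constant $K_{n,m,k}$, which is the assertion.

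I expect the main obstacle to be bookkeeping rather than anything conceptual: the computation of $\sigma_{n\pm1,m,k}-\sigma_{n,m,k}$ is algebraically bulky in raw form but collapses completely once $\alpha_0+\alpha_1+\alpha_2=1$ and $f_0+f_1+f_2=2x$ are used throughout. The genuinely delicate point is the coherent choice of the multiplicative constants left free in \eqref{eq:tau-def} so that the Toda identities hold simultaneously along the lattice; this is exactly the canonical normalization of the Painlev\'e-IV $\tau$-functions, for which I would refer to \cite{O}. Degeneracies, where some $T_j$ becomes indeterminate in the sense of Remark \ref{remark:indeterminacy}, are ruled out by the standing hypothesis that the relevant $\tau_{n,m,k}$ and $\tau_{n\pm1,m,k}$ are well-defined.
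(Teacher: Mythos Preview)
The paper does not supply its own proof of this proposition: it is stated with attribution to \cite{O} and \cite{FW}*{Proposition~5} and used as input. So there is nothing to compare your argument against in the paper itself.

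That said, your direct computational route is sound. The identities you lean on are correct: differentiating \eqref{eq:sigma-f} along \eqref{eq:p4-sym} indeed gives $\sigma'=f_1f_2$; plugging the $T_1$-formula \eqref{eq:T1-f} into \eqref{eq:sigma-f} and simplifying with $\alpha_0+\alpha_1+\alpha_2=1$ does yield $\sigma_{n+1}-\sigma_n=-(f_2)_{n+1}$; and the resulting difference $(f_2)_n-(f_2)_{n+1}$ equals $(f_1f_2)'/(f_1f_2+2\alpha_1)$ on the nose. Integrating and using the freedom in the multiplicative constant of one of the three $\tau$'s to set $c=1$ is legitimate for the statement as written (which fixes a single lattice point). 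You correctly flag that making this normalization coherent along the whole lattice is the subtle part and defer to \cite{O}; that is the standard resolution. For the $T_2$-direction you only sketch the parallel argument; this is fine in principle, but if you want the proof to be self-contained you should at least record the analog of $\sigma_{n+1}-\sigma_n=-(f_2)_{n+1}$ for $T_2$, since that is the one line where the computation differs.
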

\begin{remark}
As noted above, functions $\tau_{n, m, k}(x)$ are entire with only simple zeros. Writing out \eqref{eq:toda} in the form 
\[
\tilde{\tau}_{n,m,k}''\tilde{\tau}_{n,m,k} - (\tilde{\tau}_{n,m,k}')^2 = \tilde{\tau}_{n + 1,m,k} \tilde{\tau}_{n - 1,m,k},
\]
one can see that $\tilde{\tau}_{n,m,k}(x), \tilde{\tau}_{n+1,m,k}(x)$ (and hence ${\tau}_{n,m,k}(x), {\tau}_{n+1,m,k}(x)$) do not share any common roots. Similar considerations imply the analogous fact about $\tau_{n,m,k}, \tau_{n,m+1,k}$ and $\tau_{n,m,k}, \tau_{n,m,k+1}$.
\label{remark:tau-zeros}
\end{remark}

It is well-known (see \cite{GLS} and references therein) that \eqref{eq:p4-ab} possesses solutions which can be written in terms of parabolic cylinder functions. One way to see this is to require that \eqref{eq:p4-ab} be consistent with a Riccati equation
\begin{equation}
    \dod{u}{x} = k_2(x) u^2(x) + k_1(x) u(x) + k_0(x).
    \label{eq:ric-1}
\end{equation}
The compatibility of \eqref{eq:ric-1} with \eqref{eq:p4-ab} determines the coefficients $k_j(x)$ up to some choices of signs and one finds that \eqref{eq:ric-1} can be solved in terms of parabolic cylinder functions. For the remainder of the paper, we will be working with the particular seed triple
\begin{equation}
    (f_0(x), f_1(x), f_2(x))_{0,0,0} = \left(0,  \frac{\varphi'(x)}{\varphi(x)} , 2x -\frac{\varphi'(x)}{\varphi(x)} \right), \quad \varphi(x) = \ee^{\frac12 x^2} D_{-\frac12}(\sqrt{2} x), 
    \label{eq:seed}
\end{equation}
which solves \eqref{eq:p4-sym} with parameters 
\begin{equation}
     (\alpha_0, \alpha_1, \alpha_2) = \left(0, \frac12, \frac12 \right)
     \label{eq:seed-parameters}
\end{equation}
and satisfies \eqref{eq:p4-sym-constraint}. We now address the indeterminacy problem raised in Remark \ref{remark:indeterminacy}. 

\begin{proposition}[\cite{BM}*{Proposition 11}]
    Let $\left((f_0)_{0, 0, 0}, (f_1)_{0, 0, 0}, (f_2)_{0,0, 0} \right)$ and $(\alpha_0, \alpha_1, \alpha_2)$ be as in \eqref{eq:seed}, \eqref{eq:seed-parameters}, respectively, and recall notation \eqref{eq:iterate-notation}. Then, \sloppy $\left((f_0)_{n, 0, 0}, (f_1)_{n, 0, 0}, (f_2)_{n,0, 0} \right)$ and $\left((f_0)_{n, -1, 0}, (f_1)_{n, -1, 0}, (f_2)_{n, -1, 0} \right)$ are well-defined for all $n\geq 0$. 
    \label{prop:determinate}
\end{proposition}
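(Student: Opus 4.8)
The plan is to reduce the claim to a purely parametric statement and then verify it from Table~\ref{tab:T-parameters}. Since $(f_\bullet)_{n,0,0}$ is obtained from the seed \eqref{eq:seed} by iterating $T_2$, and $(f_\bullet)_{n,-1,0}$ by one application of $T_1^{-1}$ followed by iterating $T_2$ (these transformations commute), it is enough, arguing by induction on $n$, to show that every individual $T_2$-step — and the single initial $T_1^{-1}$ — is determinate. For this I first need the analogues of the criterion in Remark~\ref{remark:indeterminacy}: by the cyclic $S_3$-symmetry of \eqref{eq:p4-sym} (or simply by writing $T_2=s_1\pi s_2$ and tracking which denominators appear), $T_2$ is determinate at a triple $(f_0,f_1,f_2)$ precisely when $f_2\not\equiv 0$ and $f_0f_2+2\alpha_2\not\equiv 0$, while the formula in the Example shows $T_1^{-1}$ is determinate when $f_0\not\equiv 0$ and $f_0f_2-2\alpha_0\not\equiv 0$.

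Next I would show that each of these non-vanishing requirements fails only on an explicit locus in $\alpha$-space. If a component $f_j$ of a genuine solution of \eqref{eq:p4-sym} is identically zero, the $j$-th equation of \eqref{eq:p4-sym} reads $0=2\alpha_j$, so $\alpha_j=0$. If instead $f_0f_2+2\alpha_2\equiv 0$, then $f_0=-2\alpha_2/f_2$; feeding this and the constraint \eqref{eq:p4-sym-constraint} into the $f_2$- and $f_0$-equations of \eqref{eq:p4-sym} and simplifying forces $\alpha_0+\alpha_2=0$, i.e. $\alpha_1=1$ (the same elimination applied to $f_1f_2+2\alpha_1\equiv 0$ gives $\alpha_0=1$, and cyclic relabelling covers the rest). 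Hence $T_2$ is determinate at any solution with $\alpha_2\neq 0$ and $\alpha_1\neq 1$. By Table~\ref{tab:T-parameters} and the seed values \eqref{eq:seed-parameters}, the parameters along $(f_\bullet)_{n,0,0}$ are $(0,\tfrac12-n,\tfrac12+n)$ and along $(f_\bullet)_{n,-1,0}$ they are $(1,-\tfrac12-n,\tfrac12+n)$; in both cases the parameters controlling the next $T_2$-step, namely $\tfrac12\pm n$, never lie in $\{0,1\}$ for $n\in\Z_{\geq 0}$, so every $T_2$-step is determinate. It remains only to treat the initial $T_1^{-1}$ applied to the seed: there $f_0\equiv 0$, so the Example's formula is literally $0/0$, but since $(\alpha_0)_{0,0,0}=0$ the reflection $s_0$ is the identity and $T_1^{-1}=s_1s_2\pi^{-1}$ collapses to elementary steps whose denominators — components of $\pi^{-1}$ of the seed, e.g. $2x-\varphi'(x)/\varphi(x)$ — are manifestly not identically zero; thus $(f_\bullet)_{0,-1,0}$ is a bona fide solution with parameters $(1,-\tfrac12,\tfrac12)$ and the induction runs.

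The hard part, I expect, is not conceptual but bookkeeping: stating the determinacy criteria for the generators actually used (Remark~\ref{remark:indeterminacy} records only $T_1$), carrying out the Riccati-to-parameter eliminations without sign slips, and handling the degenerate seed where $\alpha_0=0$ trivialises a reflection. One could instead bypass the elimination lemmas by expressing the combinations $f_if_j+2\alpha_i$ via the Toda identities of Proposition~\ref{prop:toda} — using $\sigma'=\partial_x H=f_1f_2$ — as quotients of neighbouring $\tau$-functions, which by the identification of Appendix~\ref{appendix-a} are nonzero constant multiples of Hankel determinants $H_\bullet(\cdot,N)$; these are entire and strictly positive for positive real values of their argument, hence not identically zero. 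That route, however, only trades one piece of bookkeeping for another, since one must pin down exactly which $\tau$-functions occur (reading boundary indices as empty determinants).
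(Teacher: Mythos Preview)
Your argument has a genuine gap at the very first step: you have misidentified which B\"acklund transformation is being iterated. Although the displayed notation~\eqref{eq:iterate-notation} in the paper is confusingly written, the rest of the paper (see in particular \eqref{eq:tau-n-special-2}, Proposition~\ref{prop:toda}, and the paper's own proof of the proposition) makes clear that the \emph{first} subscript records the power of $T_1$ and the \emph{second} the power of $T_2$. Thus $(f_\bullet)_{n,0,0}=T_1^n(\text{seed})$ with parameters $(-n,\tfrac12+n,\tfrac12)$, and $(f_\bullet)_{n,-1,0}=T_1^nT_2^{-1}(\text{seed})$ with parameters $(-n,\tfrac32+n,-\tfrac12)$. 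Your proof, as written, establishes determinacy of the $T_2$-iterates with parameters $(0,\tfrac12-n,\tfrac12+n)$, which is not the statement at hand. Relatedly, your treatment of the base case for the second family is garbled: you invoke ``$s_0$ is the identity since $\alpha_0=0$'', but $s_0$ does not occur in $T_1^{-1}=s_1s_2\pi^{-1}$; and in any event it is $T_2^{-1}$, not $T_1^{-1}$, that must be applied.

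That said, your \emph{method} is correct and, once the indices are fixed, yields a clean self-contained proof different from the paper's. Your key eliminations are right: from $f_j\equiv0$ the $j$-th equation of \eqref{eq:p4-sym} forces $\alpha_j=0$, and differentiating $f_1f_2+2\alpha_1\equiv0$ and using \eqref{eq:p4-sym} gives $2(\alpha_1+\alpha_2)f_1\equiv0$, hence $\alpha_0=1$. So $T_1$ is determinate whenever $\alpha_1\neq0$ and $\alpha_0\neq1$, and along both sequences above these read $\tfrac12+n\neq0$ (resp.\ $\tfrac32+n\neq0$) and $-n\neq1$, which hold for all $n\geq0$. The paper instead passes to the $(\Theta_0,\Theta_\infty)$ parametrization via Theorem~\ref{thm:sym}, identifies $T_1$ with the B\"acklund transformation $u\mapsto u_\searrow$ of \cite{BM}, and invokes the determinacy criterion $\Theta_0(\Theta_\infty-\Theta_0-1)\neq0$ from \cite{BM}*{Proposition~11}; for the second family it first writes $(f_\bullet)_{0,-1,0}$ explicitly via parabolic-cylinder identities (see \eqref{eq:seed-2}) rather than computing $T_2^{-1}$ abstractly. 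Your approach has the advantage of staying entirely within the symmetric form and not appealing to an external reference; the paper's has the advantage of a cleaner base case for the $(n,-1,0)$ family.
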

\begin{proof}
    Let $u(x) := -(f_1)_{0,0,0}(x)$. By Theorem \ref{thm:sym}, $u(x)$ solves \eqref{eq:p4-ab} with parameters $\Theta_\infty = \frac{1}{4}$ and $\Theta_0^2 = \frac{1}{16}$. Choosing $\Theta_0 = \frac14$ and letting $u_{\searrow}(x) := -T_1((f_1)_{0,0,0}(x))$, a direct computation using \eqref{eq:T1-f} and \eqref{eq:p4-sym-constraint} yields 
    \begin{equation}
        u_{\searrow}(x) = \dfrac{(2\alpha_1)^2 -8(\alpha_1 + \alpha_2)u^2(x) - 4x^2u^2(x) - 4xu^3(x)-u^4(x)-4\alpha_1u'(x) + u'(x)^2}{2u(x)(-2\alpha_1 +2xu(x) + u^2(x) + u'(x))},
        \label{eq:T1-u}
    \end{equation}
    which, upon recalling the definitions of $\alpha_i$'s and the constraint \eqref{eq:alpha-plane}, agrees with \cite{BM}*{Equation 3.50}. Observe that if $u_{\searrow}(x)$ is well-defined, then so is the triple $T_1((f_0)_{0,0,0}, (f_1)_{0,0,0}, (f_2)_{0,0,0})$. Indeed, the conditions $(f_1)_{0,0,0} \not \equiv 0$ and \eqref{eq:T1-det} are equivalent to the denominator of \eqref{eq:T1-u} not vanishing identically. Since $\Theta_0(\Theta_\infty - \Theta_0 - 1) \neq 0$, by \cite{BM}*{Proposition 11}, we have that $u_{\searrow}(x)$ is well-defined and solves \eqref{eq:p4-ab} with parameters
    \[
    \Theta_{\infty, \searrow} = \Theta_\infty - \frac12, \quad \Theta_{0, \searrow} = \Theta_0 + \frac12.
    \]
    The first claim now follows by inductively applying \cite{BM}*{Proposition 11} since, for $n \geq 0$,
    \[
    \left(\Theta_0 + \frac{n}{2} \right)(\Theta_\infty - \Theta_0 - 1) \neq 0.
    \]
    For the second claim, we first note that by \cite{DLMF}*{12.8.1-2},
    \begin{equation}
    \left(f_0(x), f_1(x), f_2(x) \right)_{0, -1, 0} = \left( 0, 2x + \dfrac{\phi'(x)}{\phi(x)} , -\dfrac{\phi'(x)}{\phi(x)} \right), \quad \phi(x) = \ee^{\frac12 x^2}D_{-\frac32}(\sqrt{2} x),
    \label{eq:seed-2}
    \end{equation}
    solves \eqref{eq:p4-sym} with parameters $(0, \frac32, -\frac12)$ and satisfies \eqref{eq:p4-sym-constraint}. This implies that $-(f_1)_{0, -1, 0}(x)$ solves \eqref{eq:p4-ab} with $\Theta_0 = \Theta_\infty = \frac34$. Repeating the argument above yields the result.
\end{proof}

\begin{remark}
    A given solution of \eqref{eq:p4-ab} $(u(x),  \Theta_0, \Theta_\infty)$ can be embedded into at least two distinct pairs of solutions of \eqref{eq:p4-sym} by setting $u(x) = -f_1(x)$ and making the choice $\alpha_1 = \pm 2\Theta_0$. This then fixes the triple $(f_0, f_1, f_2)$ uniquely by \eqref{eq:f-2} and \eqref{eq:p4-sym-constraint}. In the proof of Proposition \ref{prop:determinate}, the effect of making the choice $\Theta_0 = -\frac{1}{4}$ is that $u_{\swarrow}(x) := -T_1(f_1(x))$ solves \eqref{eq:p4-ab} with parameters
    \[
    \Theta_{\infty, \swarrow} = \Theta_\infty - \frac12, \quad \Theta_{0, \swarrow} = \Theta_0 - \frac12.
    \]
    This again agrees with \cite{BM}*{Equation 3.49}.
\end{remark}

The $\tau$-functions corresponding to the triples \eqref{eq:seed} and \eqref{eq:seed-2} can be calculated explicitly. Indeed, from \eqref{eq:tau-def} we have $\tau_{0,0,0}(x) := \tau(x) = \ee^{-\alpha_1 x^2} \varphi(x)$ or $\tilde{\tau}_{0,0,0}(x) = \varphi(x)$. Setting $\tilde{\tau}_{-1,0,0}(x) \equiv 1$\footnote{It can be directly verified that this is consistent with applying $T_1^{-1}$ to the triple \eqref{eq:seed}}, the Desnanot-Jacobi identity implies that the functions
    \begin{equation}
    \tilde{\tau}_{n,0,0}(x) = \det \left[ \dod[i+j]{}{x}\left( \ee^{\frac12 x^2}D_{-\frac12}(\sqrt{2} x) \right)\right ]_{i, j = 0}^{n}
    \label{eq:tau-n-special}
    \end{equation}
satisfy \eqref{eq:toda}. Similarly, recalling \eqref{eq:seed-2}, the definition of $\tilde{\tau}_{n, m, k}(x)$, and that $T_j's$ commute, we find
\begin{equation}
    \tilde{\tau}_{n, -1, 0}(x) = T_2^{-1}T_1^n(\tilde{\tau}_{0,0,0}(x)) = T_1^n(\tilde{\tau}_{0,-1,0}(x)) = \det \left[ \dod[i+j]{}{x}\left( \ee^{\frac12 x^2}D_{-\frac32}(\sqrt{2} x) \right)\right ]_{i, j = 0}^n. 
    \label{eq:tau-n-special-2}
\end{equation}
This particular family appears as recurrence coefficients of the orthogonal polynomials satisfying \eqref{eq:ortho}, as can be seen from the following theorem. 

\begin{theorem}[\cite{MR3494152}*{Section 4.2}]
    \label{thm:main2}
    Let $\left((f_0)_{0, 0, 0}, (f_1)_{0, 0, 0}, (f_2)_{0,0, 0} \right)$ and $(\alpha_0, \alpha_1, \alpha_2)$ be as in \eqref{eq:seed}, \eqref{eq:seed-parameters}, respectively. With notation as in \eqref{eq:iterate-notation}, we have
    \begin{align}
        (f_0)_{n, 0, 0}(x) &= - N^{\frac12}\gamma_{2n}^2(2N^{-\frac12}x, N) = - \gamma_{2n}^2(2x, 1), \label{eq:f0-n-0-0} \medskip\\
        (f_1)_{n, 0, 0}(x) &= - N^{\frac12}\gamma_{2n+1}^2(2N^{-\frac12}x, N) = - \gamma_{2n+1}^2(2x, 1), \label{eq:f1-n-0-0} \medskip\\
        (f_2)_{n, 0, 0}(x) &= 2x + N^{\frac12}(\gamma_{2n}^2(2N^{-\frac12}x, N) +\gamma_{2n+1}^2(2N^{-\frac12}x,N) \label{eq:f2-n-0-0} \medskip\\
        &= 2x + \gamma_{2n}^2(2x, 1) +\gamma_{2n+1}^2(2x,1).\notag
    \end{align}
    Furthermore, writing\footnote{Since $V(z;t)$ is an even function, we have $P_n(z;t, N)$ are even(odd) functions whenever $\deg P_n$ is even(odd)}$$P_n(z; t, N) := z^n + \sum_{k = 1}^{\lfloor \frac{n}{2} \rfloor} \mathfrak{p}_{n, n - 2k}(t, N) z^{n - 2k},$$ we have
    \begin{align}
        \sigma_{n-1, 0, 0}(x) &= N^{\frac12} \mf p_{2n, 2n-2}(2N^{-\frac12}x, N) - (2n-1)x \label{eq:sigma-n-0-0} \medskip \\
        &= \mf p_{2n, 2n-2}(2x, 1) - (2n-1)x, \notag \medskip\\
        \sigma_{n-1, -1, 0}(x) &= N^{\frac12} \mf p_{2n+1, 2n-1}(2N^{-\frac12}x, N) - (2n+1)x\label{eq:sigma-n-1-0} \medskip \\
        &=  \mf p_{2n+1, 2n-1}(2x, 1) - (2n+1)x. \notag
    \end{align}
\end{theorem}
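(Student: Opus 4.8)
The plan is to reduce the whole statement to the case $N=1$, $t=2x$, and then recognise the claimed formulas as consequences of two elementary identities for the recurrence coefficients together with an induction on the number of B\"acklund steps. Rescaling $z\mapsto N^{-1/4}z$ in \eqref{eq:three-term} (equivalently, applying \eqref{eq:freud-connection}) gives $\gamma_n^2(N^{1/2}t,1)=N^{1/2}\gamma_n^2(t,N)$ and, matching coefficients in $P_n$, $\mf p_{n,n-2}(N^{1/2}t,1)=N^{1/2}\mf p_{n,n-2}(t,N)$; these are exactly the two equalities on the right of each of \eqref{eq:f0-n-0-0}--\eqref{eq:sigma-n-1-0}, so it suffices to prove the middle expressions. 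Write $g_k(x):=\gamma_k^2(2x,1)$, with the standard convention $g_0\equiv 0$ (forced by \eqref{string-eq}). The two facts I use are: \textup{(i)} Shohat's string equation \eqref{string-eq}, which at $N=1$, $t=2x$ reads $g_k\,(2x+g_{k-1}+g_k+g_{k+1})=k$; and \textup{(ii)} the Toda-type identity obtained by differentiating $h_n(t,1)=\int_\Ga P_n^2\,\ee^{-V}\,\dd z$ in $t$: since $\partial_tP_n$ has degree $<n$ and $\partial_t\ee^{-V}=-\tfrac12 z^2\ee^{-V}$, the recurrence \eqref{eq:three-term} yields $\partial_t\log h_n=-\tfrac12(\gamma_{n+1}^2+\gamma_n^2)$, hence $\partial_t\gamma_n^2=-\tfrac12\gamma_n^2(\gamma_{n+1}^2-\gamma_{n-1}^2)$, i.e.\ $g_k'=g_k(g_{k-1}-g_{k+1})$. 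Both are identities of meromorphic functions of $t$; for $t\in(-2,\infty)$ one may take $\Ga=\R$ in \eqref{eq:ortho} and all manipulations are elementary, and the general case follows by analytic continuation.

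For \eqref{eq:f0-n-0-0}--\eqref{eq:f2-n-0-0} I induct on $n$. When $n=0$, $P_1(z;t,1)=z$ (odd moments vanish), so $\gamma_1^2(t,1)=\mu_2(t,1)/\mu_0(t,1)=-2\,\partial_t\log\mu_0(t,1)$ by \eqref{eq:moment-derivative}; substituting \eqref{eq:moment-0} and putting $t=2x$ gives $g_1(x)=-x-\sqrt2\,D_{-1/2}'(\sqrt2 x)/D_{-1/2}(\sqrt2 x)=-\varphi'(x)/\varphi(x)$, so $\bigl(-g_0,-g_1,2x+g_0+g_1\bigr)=\bigl(0,\varphi'/\varphi,2x-\varphi'/\varphi\bigr)$ is precisely the seed \eqref{eq:seed}; inserting this triple into \eqref{eq:p4-sym} and invoking the string equation confirms the parameters \eqref{eq:seed-parameters}. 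For the inductive step, assume $(f_0,f_1,f_2)_{n,0,0}=(-g_{2n},-g_{2n+1},2x+g_{2n}+g_{2n+1})$ with parameters $(-n,n+\tfrac12,\tfrac12)$ (the value produced by applying Table \ref{tab:T-parameters} $n$ times). The triple is determinate by Proposition \ref{prop:determinate}, so $(f_j)_{n+1,0,0}=T_1\bigl((f_j)_{n,0,0}\bigr)$ is given by \eqref{eq:T1-f}: its first component is $2x+g_{2n}+g_{2n+1}-\tfrac{2n+1}{g_{2n+1}}$, which equals $-g_{2n+2}$ exactly by the string equation at index $2n+1$; its third component is $-g_{2n+1}+\tfrac{2(n+1)}{g_{2n+2}}$, which equals $2x+g_{2n+2}+g_{2n+3}$ exactly by the string equation at index $2n+2$; and the second component is then forced by the constraint \eqref{eq:p4-sym-constraint}, which $T_1$ preserves. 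The parameters update to $(-n-1,n+\tfrac32,\tfrac12)$ by Table \ref{tab:T-parameters}, closing the induction; \eqref{eq:f0-n-0-0}--\eqref{eq:f2-n-0-0} for general $N$ then follow from the rescaling above.

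For \eqref{eq:sigma-n-0-0} I use the explicit formula \eqref{eq:sigma-f}, $\sigma=\tfrac12 f_0f_1f_2+\alpha_2 f_1-\alpha_1 f_2$, with the triple and parameters $(1-n,n-\tfrac12,\tfrac12)$ just obtained, together with the elementary identity $\mf p_{m,m-2}(t,N)=-\sum_{k=1}^{m-1}\gamma_k^2(t,N)$ (from matching the $z^{m-2}$ coefficient in \eqref{eq:three-term}), so that the right-hand side of \eqref{eq:sigma-n-0-0} at $N=1$, $t=2x$ is $-\sum_{k=1}^{2n-1}g_k-(2n-1)x$. Both sides are meromorphic on $\C$, so it is enough to match one derivative and one value. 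On the one hand $\sigma'=\partial_x H_{IV}=-2pq=f_1f_2$ (with $q=-f_1$, $p=f_2/2$), so using \eqref{eq:f1-n-0-0}--\eqref{eq:f2-n-0-0} and the string equation at index $2n-1$, $\sigma_{n-1,0,0}'(x)=-g_{2n-1}(2x+g_{2n-2}+g_{2n-1})=g_{2n-1}g_{2n}-(2n-1)$; on the other hand, by \textup{(ii)} the sum $\sum_{k=1}^{2n-1}g_k'=\sum_{k=1}^{2n-1}(g_{k-1}g_k-g_kg_{k+1})$ telescopes to $-g_{2n-1}g_{2n}$ (as $g_0\equiv 0$), so $\partial_x\bigl(-\sum_{k=1}^{2n-1}g_k-(2n-1)x\bigr)=g_{2n-1}g_{2n}-(2n-1)$ as well. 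Letting $t=2x\to+\infty$ (where $\gamma_k^2(t,1)=O(1/t)$) both sides of \eqref{eq:sigma-n-0-0} are $-(2n-1)x+o(1)$, so the integration constant vanishes. Formula \eqref{eq:sigma-n-1-0} is obtained the same way, now starting from the second seed \eqref{eq:seed-2}: by the analogous induction the $T_1$-tower $(f_j)_{n,-1,0}$ encodes the Painlev\'e-IV data of the odd-degree polynomials $P_{2m+1}$ --- equivalently, via $P_{2m+1}(z)=zQ_m(z^2)$, of the generalized Laguerre--Freud family with $\lambda=\tfrac12$ (cf.\ \eqref{eq:gen-freud}, \eqref{eq:freud-connection}, and Appendix \ref{appendix-a}) --- so that $\mf p_{2n+1,2n-1}=-\sum_{k=1}^{2n}\gamma_k^2$ and the derivative-plus-limit computation goes through with the telescoping now ending at $-g_{2n}g_{2n+1}$; alternatively one may read \eqref{eq:sigma-n-1-0} off the determinantal formula \eqref{eq:tau-n-special-2} via $\sigma_{n-1,-1,0}=\partial_x\log\tau_{n-1,-1,0}$ and the Hankel-determinant factorization of Appendix \ref{appendix-a}.

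The steps are routine individually; I expect the real work to be bookkeeping rather than conceptual: keeping the rescalings straight (the $N$-versus-$1$ passage, the factors of $2$ relating $t$ and $x$, the normalization $\varphi=\ee^{x^2/2}D_{-1/2}(\sqrt2 x)$), pinning down the constant of integration in \eqref{eq:tau-def} against the quadratic exponential of Proposition \ref{prop:toda}, and --- the most delicate point --- correctly handling the second ($T_2^{-1}$-shifted) tower and its identification with the odd-degree polynomials $P_{2m+1}$ through the even/odd splitting of the even-weight orthogonal polynomials. Finally, this theorem is essentially \cite{MR3494152}*{Section 4.2} transcribed into the present notation, so one could also simply cite it; the above is the self-contained argument.
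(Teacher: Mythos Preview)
Your approach is correct and genuinely different from the paper's. The paper's proof is essentially a notation-matching exercise: it records the rescaling identities (as you do), then cites \cite{MR3494152}*{Lemmas 4 and 5} for the substantive content, deriving only the telescoping identity $\sum_{j\le n}\beta_j=-\mf s_{n+1,n-1}$ along the way. You instead give a self-contained argument: verify the seed directly from the zeroth moment, run an explicit induction on the $T_1$-iterates using the string equation \eqref{string-eq} to simplify each entry of \eqref{eq:T1-f}, and then establish $\sigma_{n-1,0,0}$ by matching $\sigma'=f_1f_2$ against the Toda-telescoped derivative of $-\sum_{k<2n}g_k$. This buys independence from the reference and makes the mechanism (string equation $+$ Toda $+$ explicit B\"acklund formula) transparent; the paper's route is shorter but opaque.

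One point to tighten: your treatment of $\sigma_{n-1,-1,0}$ is only a sketch. The phrase ``by the analogous induction the $T_1$-tower $(f_j)_{n,-1,0}$ encodes the odd-degree polynomials'' hides a real computation. The induction from the second seed \eqref{eq:seed-2} does \emph{not} directly produce the same $g_k=\gamma_k^2(2x,1)$ in the roles of $(f_0,f_1)$; rather it produces the recurrence coefficients of the $\lambda=\tfrac12$ Laguerre--Freud family, and one then has to translate back via the even/odd splitting $P_{2m+1}(z)=zQ_m(z^2)$ to recover $\mf p_{2n+1,2n-1}$ in terms of the original $g_k$. A cleaner route, since you have already identified $(f_j)_{n-1,0,0}$ and its parameters, is to apply $T_2^{-1}$ once to that triple (this involves only $g_{2n-2},g_{2n-1}$ and the string equation at the neighbouring indices) and then compute $(f_1f_2)_{n-1,-1,0}$ directly; the derivative-plus-limit argument then closes exactly as in the even case. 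Your alternative via \eqref{eq:tau-n-special-2} and Appendix~\ref{appendix-a} also works, but note that fixing the integration constant in \eqref{eq:tau-def} against the factorization of $H_{n}$ is precisely the bookkeeping you flagged as delicate.
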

\begin{proof}
    We first note that a re-scaling in \eqref{eq:ortho} yields 
\begin{equation*}
        P_n(z; t, N) = N^{-\frac{1}{4}\deg P_n} P_n(N^{\frac14}z; N^{\frac12} t, 1).
        \label{eq:pn-rescaled}
\end{equation*}
This implies 
\begin{equation*}
\begin{aligned}
    N^{\frac12}\gamma_{n}^2(2N^{-\frac12}t, N) &= \gamma_{n}^2(2t, 1),\\
     N^{\frac12} \mf p_{n, n-2}(2N^{-\frac12} t, N) &= \mf p_{n, n-2}(2t, 1).
\end{aligned}
\label{eq:scaled-quantities}
\end{equation*}
The proof of \eqref{eq:f0-n-0-0} - \eqref{eq:f2-n-0-0} is an exercise in comparing notation with \cite{MR3494152} which we now carry out. Writing the three-term recurrence relation for $S_n(x; t, \lambda)$ as
\[
xS_{n}(x; t, \lambda) = S_{n+1}(x; t, \lambda) + \beta_n(t, \lambda) S_{n - 1}(x;t, \lambda),
\]
relation \eqref{eq:freud-connection} implies
\begin{equation*}
    \gamma_n^2(t, 1) = 2\beta_n\left(-t, -\frac{1}{2}\right).
\end{equation*}
This and \cite{MR3494152}*{Lemma 4} imply \eqref{eq:f0-n-0-0} - \eqref{eq:f2-n-0-0}. Finally, analogs of \eqref{eq:sigma-n-0-0} - \eqref{eq:sigma-n-1-0} are not explicitly stated in \cite{MR3494152}, but are easily deduced. Setting 
$$S_n\left(z; t, -\frac12 \right) := z^n + \sum_{k = 1}^{\lfloor \frac{n}{2} \rfloor} \mathfrak{s}_{n, n - 2k} \left(t,-\frac12 \right) z^{n - 2k},$$
then it follows from \eqref{eq:freud-connection} that 
\begin{equation}
    \mathfrak{p}_{n, n-2}(t, 1) = 2\mathfrak{s}_{n, n-2}\left(-t, -\frac12 \right). 
    \label{eq:p-s}
\end{equation}
Furthermore, the three-term recurrence relation implies
\begin{equation}
    \beta_n\left(t, -\frac12 \right) = \mathfrak{s}_{n, n-2}\left(t, -\frac12 \right) - \mathfrak{s}_{n+1, n-1}\left(t, -\frac12 \right).
    \label{eq:beta-subleading}
\end{equation}
Using \eqref{eq:beta-subleading}, we have the identities
\begin{equation}
    \sum_{j = 0}^n \beta_j\left(t, -\frac12 \right) = -\mathfrak s_{n+1, n-1}\left(t, -\frac12 \right).
    \label{eq:beta-sum}
\end{equation}
Combining \eqref{eq:p-s}, \eqref{eq:beta-sum} with \cite{MR3494152}*{Lemma 5} yields \eqref{eq:sigma-n-0-0} - \eqref{eq:sigma-n-1-0}.
\end{proof}

An immediate corollary of Theorem \ref{thm:asymptotics-gamma-h} and Theorem \ref{thm:main2} is the following.
\begin{corollary}
    \label{cor:asymp-p4}
    Let $\left((f_0)_{0, 0, 0}, (f_1)_{0, 0, 0}, (f_2)_{0,0, 0} \right)$ and $(\alpha_0, \alpha_1, \alpha_2)$ be as in \eqref{eq:seed}, \eqref{eq:seed-parameters}, respectively and recall \eqref{eq:iterate-notation}. Then, as $n \to \infty$, 
    \begin{equation*}
    \begin{aligned}
        2^{\frac12}n^{-\frac12}(f_0)_{n, 0,0} \left(2^{-\frac12}n^{\frac12}x \right) &=  x + \sqrt{x^2 - 4} + \Oo\left( n^{-1} \right), \\
        2^{\frac12}n^{-\frac12}(f_1)_{n, 0,0} \left(2^{-\frac12}n^{\frac12}x \right) &= \dfrac{4}{x + \sqrt{x^2 - 4}} + \Oo\left(  n^{-1} \right), \\
        2^{\frac12}n^{-\frac12}(f_2)_{n, 0,0} \left(2^{-\frac12}n^{\frac12}x \right) &=  \Oo\left(  n^{-1} \right),
        \label{eq:fi-thm}
    \end{aligned}
    \end{equation*}
    and
    \begin{equation*}
    \begin{aligned}
        2^{\frac12}n^{-\frac12}\sigma_{n-1, 0, 0} \left(2^{-\frac12}n^{\frac12}x \right) &= \dfrac{x - \sqrt{x^2 - 4}}{2} + \Oo\left(  n^{-1} \right), \\
        2^{\frac12}n^{-\frac12}\sigma_{n-1, -1, 0} \left(2^{-\frac12}n^{\frac12}x \right) &= \dfrac{-x + \sqrt{x^2-4}}{2}+ \Oo\left(  n^{-1} \right),
        \label{eq:sigma-thm}
    \end{aligned}
    \end{equation*}
    locally uniformly for $x \in \Oo_2$. In all cases, $\sqrt{x^4 - 4}$ is the branch analytic in $\C\setminus [-2, 2]$ and satisfies $\sqrt{x^2-4} \sim -x$ as $x \to \infty$.
\end{corollary}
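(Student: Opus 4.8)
The plan is to obtain Corollary \ref{cor:asymp-p4} by chaining Theorem \ref{thm:main2} — which writes the iterates $(f_j)_{n,0,0}$ and the $\sigma$-functions $\sigma_{n-1,0,0},\sigma_{n-1,-1,0}$ in terms of the recurrence coefficients $\gamma^2_m(\cdot,1)$ and the subleading polynomial coefficients $\mathfrak p_{m,m-2}(\cdot,1)$ — with Theorem \ref{thm:asymptotics-gamma-h}. The only genuine work is a rescaling that turns these fixed-weight ($N=1$) quantities into large-degree quantities evaluated at a \emph{fixed} parameter $t=x\in\Oo_2$, followed by elementary algebra with the explicit branch points $a_2(x)=\sqrt{-2-x}$ and $b_2(x)=\sqrt{2-x}$.

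First I would recall the scaling identities from the proof of Theorem \ref{thm:main2}, namely $N^{1/2}\gamma_m^2(2N^{-1/2}s,N)=\gamma_m^2(2s,1)$ and $N^{1/2}\mathfrak p_{m,m-2}(2N^{-1/2}s,N)=\mathfrak p_{m,m-2}(2s,1)$, valid for all $N>0$. Evaluating the identities of Theorem \ref{thm:main2} at $y=2^{-1/2}n^{1/2}x$ and then applying the above with $N=2n$ — the unique choice making $2(2n)^{-1/2}y=x$ — the powers of $n$ cancel and one gets $2^{1/2}n^{-1/2}(f_0)_{n,0,0}(2^{-1/2}n^{1/2}x)=-2\gamma_{2n}^2(x,2n)$, $2^{1/2}n^{-1/2}(f_1)_{n,0,0}(2^{-1/2}n^{1/2}x)=-2\gamma_{2n+1}^2(x,2n)$, $2^{1/2}n^{-1/2}(f_2)_{n,0,0}(2^{-1/2}n^{1/2}x)=2x+2\gamma_{2n}^2(x,2n)+2\gamma_{2n+1}^2(x,2n)$, $2^{1/2}n^{-1/2}\sigma_{n-1,0,0}(2^{-1/2}n^{1/2}x)=2\mathfrak p_{2n,2n-2}(x,2n)-(2n-1)x$, and $2^{1/2}n^{-1/2}\sigma_{n-1,-1,0}(2^{-1/2}n^{1/2}x)=2\mathfrak p_{2n+1,2n-1}(x,2n)-(2n+1)x$. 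Because $x\in\Oo_2$ and each relevant index differs from the corresponding polynomial degree by a bounded amount ($0$, $1$, or $2$), Theorem \ref{thm:asymptotics-gamma-h} applies with a suitable constant-shift sequence $\{N_m\}$ (for instance $N_m=m$, $N_m=m-1$, or $N_m=m-2$, depending on the quantity) and supplies, with locally uniform $\Oo(n^{-1})$ errors, the even/odd leading terms $\tfrac14(a_2-b_2)^2$ and $4/(a_2-b_2)^2$ for the recurrence coefficients, and $nx+\tfrac18(a_2-b_2)^2$ and $(n+1)x+\tfrac18(3a_2^2+2a_2b_2+3b_2^2)$ for $\mathfrak p_{2n,2n-2}(x,2n)$ and $\mathfrak p_{2n+1,2n-1}(x,2n)$ respectively (the latter read off from the $\mathfrak p_{2m-1,2m-3}$ formula of Theorem \ref{thm:asymptotics-gamma-h} with $m=n+1$). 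The $\Oo(n)$ contributions then cancel against the subtracted $(2n-1)x$ and $(2n+1)x$.

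It remains to simplify. From $a_2^2=-2-x$ and $b_2^2=2-x$ one gets $a_2^2+b_2^2=-2x$ and $(a_2b_2)^2=x^2-4$; tracking principal branches shows the jumps of $a_2$ and $b_2$ cancel across $(2,\infty)$, so $a_2(x)b_2(x)$ is analytic on $\C\setminus[-2,2]$, and comparison of the behavior as $x\to-\infty$ along the reals fixes the sign to $a_2(x)b_2(x)=\sqrt{x^2-4}$ in the normalization of the statement. Hence $(a_2-b_2)^2=-2(x+\sqrt{x^2-4})$, and substituting this — together with the elementary identity $4/(x+\sqrt{x^2-4})=x-\sqrt{x^2-4}$ — into the expressions above reproduces exactly $x+\sqrt{x^2-4}$, $4/(x+\sqrt{x^2-4})$, $\tfrac12(x-\sqrt{x^2-4})$, and $\tfrac12(-x+\sqrt{x^2-4})$; for $(f_2)_{n,0,0}$ the leading part $2x+\tfrac12(a_2-b_2)^2+8/(a_2-b_2)^2$ collapses to $0$, leaving only the $\Oo(n^{-1})$ remainder. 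I expect the only delicate points to be this branch bookkeeping — correctly matching $a_2(x)b_2(x)$ to the fixed branch of $\sqrt{x^2-4}$ — and, in the rescaling step, keeping the parities and index shifts straight so that Theorem \ref{thm:asymptotics-gamma-h} is invoked legitimately; everything else is routine tracking of powers of $2$ and $n$.
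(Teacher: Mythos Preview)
Your proposal is correct and follows exactly the route the paper takes: the paper states the corollary as an immediate consequence of Theorem~\ref{thm:asymptotics-gamma-h} combined with Theorem~\ref{thm:main2}, and you have simply filled in the rescaling and algebraic details that the paper leaves to the reader. Your handling of the branch identification $a_2(x)b_2(x)=\sqrt{x^2-4}$ and of the index shifts needed to invoke \eqref{eq:sub-leading-2} at $(2n+1,2n-1)$ with weight parameter $2n$ is correct.
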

\begin{figure}[t]
    \centering
    \includegraphics[width = 0.5 \textwidth]{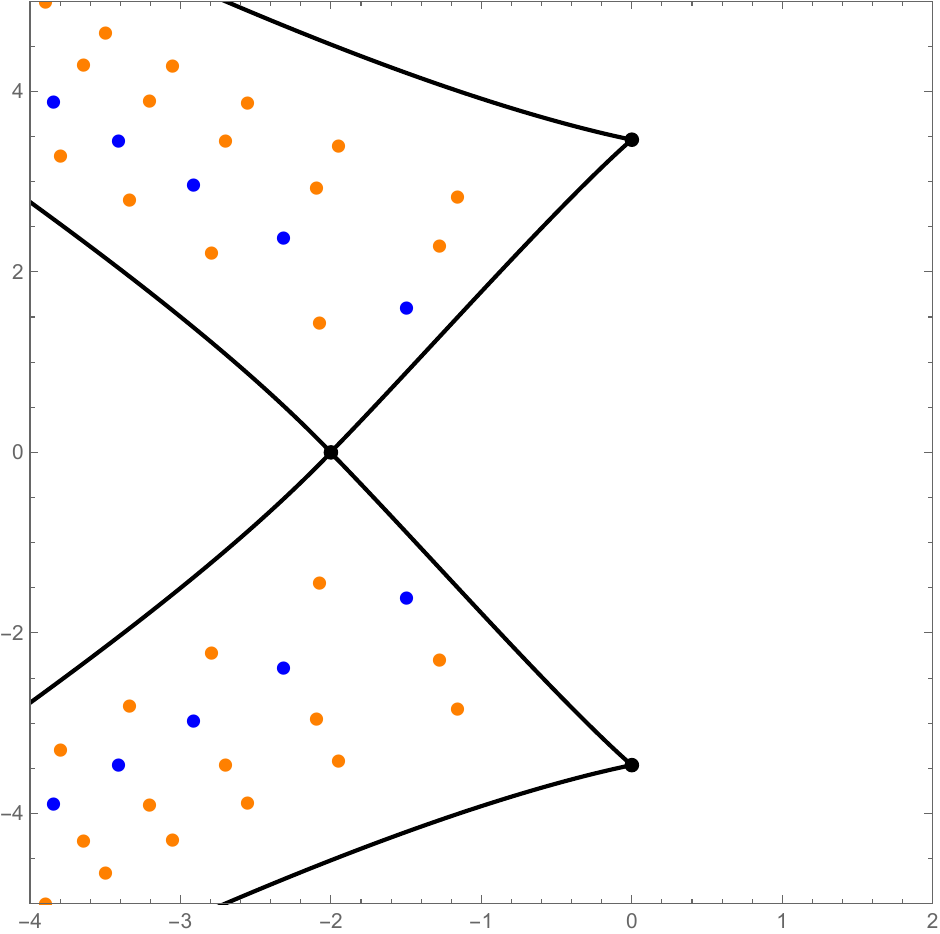}
    \put(-50,125){$\Oo_1$}
    \put(-225,125){$\Oo_2$}
    \put(-110,206){$\Oo_3$}
    \put(-85,218){$\ii \sqrt{12}$}
    \put(-85,35){$-\ii \sqrt{12}$}
    \put(-110,47){$\Oo_3$}
    \caption{Poles of $(f_1)_{n, 0, 0}(x)$ for $n = 0$ (blue) and $n = 1$ (orange). The black contours are the boundary of the open sets $\Oo_1, \Oo_2, \Oo_3$.} 
    \label{fig:polest}
\end{figure}
An obvious consequence of Corollary \ref{cor:asymp-p4} is that for $n$ large enough, the triple $\left((f_0)_{0, 0, 0}, (f_1)_{0, 0, 0}, (f_2)_{0,0, 0} \right)$ is free of poles in the region $2^{-1/2}n^{1/2}\Oo_2$, cf. Figure \ref{fig:polest}. 

The remainder of the paper is dedicated to proving Theorem \ref{thm:asymptotics-poly} and Theorem \ref{thm:asymptotics-gamma-h}.

\section{\texorpdfstring{On functions $g(z;t)$ and $\mc D(z;t)$}{the g-function and Szeg\H{o} function}}
\label{sec:g-szego}
In this section, we recall useful properties of $g(z;t)$ defined in \eqref{g-def} and prove Proposition \ref{Szego-prop}. 

\subsection{The \texorpdfstring{$g$-function}{g-function} in the two-cut case} 
\label{sec:two-cut-g}
Combining \eqref{em5}, \eqref{Q-two-cut}, and \eqref{resolvant}, we conclude that there exists a constant $\ell_*(t)$ such that \footnote{Comparing \eqref{g-V-eta} with \eqref{euler-lagrange-g}, we note that $\Re( \ell_*(t)) =-2\ell_t$. In \cite{BGM}, $\ell_*(t)$ was denoted $\ell^{(2)}_*(t)$ and $\eta(z; t)$ was denoted $\eta_2(z; t)$.}
\begin{equation}
\label{g-V-eta}
    g(z;t) = \dfrac{V(z;t) + \ell_*(t) + \eta(z;t) }{2}, \quad z \in \C\setminus \Gamma_t (-\infty, b_2],
\end{equation}
where 
\begin{equation}
\label{eta-2}
\eta(z; t) := -2\int_{b_2}^z Q^{1/2}(x;t) \dd x, 
\end{equation}
and the path of integration is taken to be in $\C \setminus \Ga_t(-\infty, b_2]$. It will be convenient to introduce the following notation: for $e\in \{\pm a_2, \pm b_2\}$, let
\begin{equation}
\label{eta-e}
\eta_{e}(z) := -2 \int_{e}^z Q^{1/2}(x) \dd x, \quad 
\end{equation}
where the path of integration is taken in
\begin{itemize}
\item $\C \setminus \Ga_t(-\infty, b_2]$ when $e = b_2$, 
\item $\C \setminus \Ga_t[-b_2, \infty)$ when $e = -b_2$, 
\item $\C \setminus \Ga_t(-\infty, -a_2] \cup \Ga[a_2, \infty)$ when $e \in \{\pm a_2\}$.
\end{itemize}
Then, it follows from definition \eqref{eta-2} that $\eta(z) = \eta_{b_2}(z)$. Note that since $\mu_t$ in \eqref{em6} is a probability measure, and using the evenness of $Q^{1/2}_+(z;t)$, we have 
\[
\dfrac{1}{\pi \ii}\int_{J_{t, 1}}Q^{1/2}_+(x) \dd x =\dfrac{1}{\pi \ii} \int_{J_{t, 2}} Q^{1/2}_+(x) \dd x = \dfrac{1}{2}.
\] 
Using this, we can deduce that
\begin{equation}
\label{eta-e-b}
    \eta_{b_2}(z;t) = \left\{ \begin{array}{l} \eta_{a_2}(z;t) \pm \pi \ii, \\
    \eta_{-a_2}(z; t) \pm \pi \ii, \\
    \eta_{-b_2}(z;t) \pm 2\pi \ii \end{array}  \right. \quad z \in \C \setminus \Ga,
\end{equation}
where the plus/minus signs correspond to $z$ on the left/right of $\Ga_t$, respectively.

It turns out that $\eta(z;t)$ (and consequently, all $\eta_{e}(z;t)$) can be written in terms of elementary functions. Indeed, one can check that 
\begin{equation}
    \eta(z;t) = -\dfrac{1}{4} (z^2 + t) R^{1/2}(z) + \log \left( \dfrac{2z^2 + t + R^{1/2}(z)}{2} \right), \quad z \in \C \setminus \Ga_t(-\infty, b_2].
    \label{eq:g-explicit}
\end{equation}
Then, using \eqref{g-V-eta}, the $g$ functions can be written explicitly 
\begin{equation*}
    g(z;t) = \dfrac{1}{2} \left( \dfrac{t }{2}z^2 + \dfrac{1}{4}z^4 + \ell_*(t) \right) - \dfrac{1}{8} (z^2 + t) R^{1/2}(z) + \dfrac{1}{2} \log \left( \dfrac{2z^2 + t + R^{1/2}(z)}{2} \right).
\end{equation*}
Combining this expression with the requirement that 
\begin{equation}
\label{g-log-requirement}
    g(z;t) = \log z + \Oo(z^{-1})
\end{equation}
yields
\begin{equation}
\label{ell}
\ell_*(t) = \dfrac{t^2}{4} - \dfrac{1}{2}.
\end{equation}
Furthermore, $g(z;t)$ has the following jumps
\begin{equation}
    (g_+ - g_-)(z;t) = \left\{ \begin{array}{ll}
    0, & z \in \Gamma[b_2, \infty), \\ \eta_{ +}(z; t), & z\in \Gamma[a_2, b_2],\\ \pi \ii, & z \in \Ga[-a_2, a_2],\\ \eta_{+}(z;t), & z \in \Ga[-b_2, -a_2], \\ 2\pi \ii, & z \in \Ga(-\infty, -b_2],
    \end{array}\right.
\label{g-jump}
\end{equation}
and 
\begin{equation}
\left(g_+ + g_- -V - \ell_*\right)(z;t) = \left\{ \begin{array}{ll}
    \eta(z;t), & z \in \Gamma[b_2, \infty), \\ 0, & z\in \Gamma[a_2, b_2],\\ \eta_{ \pm}(z; t) \mp \pi \ii, & z \in \Ga[-a_2, a_2],\\ 0, & z \in \Ga[-b_2, -a_2], \\ \eta_{ \pm}(z; t) \mp 2\pi \ii, & z \in \Ga(-\infty, -b_2],
    \end{array}\right.
\label{g-V-jump}
\end{equation}
where the second-to-last line in both formulas uses the fact that 
\[
\int_{-a_2}^{a_2} x \sqrt{(x^2 - a_2^2)(x^2 - b_2^2)} \dd x = 0.
\]
The arcs $\Ga[a_2, b_2] \cup \Ga[-b_2, -a_2]$ are solutions of $\Re[\eta(z; t)] = 0$; in fact, it was shown in \cite{BGM}*{Section 3.2} that for all $t \in \mathcal{O}_2$, there exists a (non-unique, see Remark \ref{remark:contour-freedom}) choice of $\Gamma_t$ so that 
\begin{equation}
\label{re-eta}
\Re[\eta(z; t)] < 0, \quad z \in \Ga_t \setminus J_t.
\end{equation}
Furthermore, it follows from \eqref{em0}, \eqref{g-potential} , and the subharmonicity of $\mathcal{U}(z;t)$ that $\Re[\eta(z;t)]$ is subharmonic at any point $z_0 \in J_t$. We then deduce from the maximum principle for subharmonic functions that every $z \in J_{t, 1} \cup J_{t, 2}$ is on the boundary of the set $\Re[\eta(z;t)] > 0$, cf. Figure \ref{fig:eta-sign-chart}. 

\subsection{Proof of Proposition \ref{Szego-prop}}
\label{sec:szego-proof}
From \eqref{Q-R} it follows that $R^{1/2}(z;t)$ is analytic in $\C \setminus J_t$ and for $z \in J_t\setminus \{\pm a_2, \pm b_2\}$ we have $R^{1/2}(z; t)_+ = -R^{1/2}(z; t)_-$. This implies \eqref{Szego-jump}. At $z = \infty$, we note that by \eqref{em5} and the choice if the square root, we have 
    \begin{equation}
    Q^{1/2}(z, t) = \dfrac{V'(z, t)}{2} - \dfrac{1}{z} +  \Oo \left( {z^{-3}} \right) \qasq z \to \infty.
    \label{eq:Q-root-series}
    \end{equation}
    By definition of $R^{1/2}(z;t)$ and \eqref{eq:Q-root-series} we find 
    \begin{equation*}
        (z^2 + t) R^{1/2}(z;t) = 4V(z;t) + 4\ell_*(t) + \Oo \left( {z^{-2}} \right) \qasq z \to \infty.
    \end{equation*}
    Hence, $\mc D(z;t)$ has a convergent Laurent expansion centered at $z = \infty$ of the form \eqref{Szego-lim}.

\section{Proofs of Theorem \ref{thm:asymptotics-poly} and Theorem \ref{thm:asymptotics-gamma-h}} \label{sec:rh}

\subsection{Initial Riemann-Hilbert problem}
\label{sec:irhp}
Henceforth, we omit the dependence on \( t \) whenever it does not cause ambiguity. In what follows, we will denote matrices with bold, capitalized symbols (e.g. $\mathbf{Y}$), with the exception of the Pauli matrices:
\[
\mathbb{I}:=\begin{bmatrix} 1 & 0 \\ 0 & 1 \end{bmatrix}, \quad \sigma_1:=\begin{bmatrix} 0 & 1 \\ 1 & 0 \end{bmatrix},\quad  \sigma_2:=\begin{bmatrix} 0 & -\ii \\ \ii & 0 \end{bmatrix}\qandq \sigma_3:=\begin{bmatrix} 1 & 0 \\ 0 & -1 \end{bmatrix},
\]
and will use the notation 
\[
f^{\sigma_3} := \begin{bmatrix} f & 0 \\ 0 & f^{-1} \end{bmatrix}.
\]
We are seeking solutions of the following sequence of Riemann-Hilbert problems for $2\times2$ matrix functions (\rhy):
\begin{enumerate}
\label{rhy}
\item[(a)] $\mathbf Y_n(z)$ is analytic in $\C\setminus \Ga$ and $\lim_{\C\setminus\Ga \ni z\to\infty}\mathbf Y_n(z)z^{-n\sigma_3}=\mathbf I$;
\item[(b)] $\mathbf Y_n(z)$ has continuous boundary values on $\Ga$ that satisfy
\[
\mathbf Y_{n, +}(x) = \mathbf Y_{n,-}(x) \begin{bmatrix}1& \ee^{-N_nV(x)}\\0&1\end{bmatrix},
\]
where, as before, \( V(z) \) is given by \eqref{potential} and the sequence \( \{N_n\} \) is such that \( |n-N_n| < C \) for some \( C>0 \).
\end{enumerate}
If the solution of \hyperref[rhy]{\rhy} exists, then it is necessarily of the form
\begin{equation}
\label{rh1}
\mathbf Y_n(z) = \begin{bmatrix}
P_n(z) & \mathcal{C}\big(P_n\ee^{-N_nV}\big)(z) \medskip \\
-\frac{2\pi\ii}{h_{n-1}}P_{n-1}(z) & -\frac{2\pi\ii}{h_{n-1}}\mathcal{C}\big(P_{n-1}\ee^{-N_nV}\big)(z)
\end{bmatrix},
\end{equation}
where $P_n(z)=P_n(z;t,N_n)$ are the polynomial satisfying orthogonality relations \eqref{eq:ortho}, $h_n=h_n(t,N_n)$ are the constants defined in \eqref{eq:normalizing}, and $\mathcal{C}f(z)$ is the Cauchy transform of a function $f$ given on $\R$, i.e.,
\[
\mathcal{C}(f)(z) := \frac1{2\pi\ii}\int_\R\frac{f(x)}{x-z}\dd x.
\]
The connection of \hyperref[rhy]{\rhy} to orthogonal polynomials was first demonstrated by Fokas, Its, and Kitaev in \cite{FIK2}. Below, we show the solvability of \hyperref[rhy]{\rhy} for all $n\in\N$ large enough following the method of non-linear steepest descent introduced by Deift and Zhou \cite{MR1207209}. 

\subsection{Normalized Riemann-Hilbert problem and lenses}

Suppose that $\mathbf Y_n(z)$ is a solution of \hyperref[rhy]{\rhy}. Let
\begin{equation}
\label{rh2}
\mathbf T_n(z) := \ee^{-\frac{n}{2}\ell_*\sigma_3}\mathbf Y_n(z) \ee^{-n(g(z)-\frac{1}{2}\ell_*)\sigma_3},
\end{equation}
where the function $g(z)$ is defined by \eqref{g-def} and \( \ell_* \) was introduced in \eqref{g-V-eta}. Then
\[
\mathbf T_{n,+}(x) = \mathbf T_{n,-}(x)\begin{bmatrix} \ee^{-n(g_+(x)-g_-(x))} & \ee^{n(g_+(x)+g_-(x)-V(x)-\ell_*)+(n-N_n)V(x)} \\ 0 & \ee^{-n(g_-(x)-g_+(x))} \end{bmatrix},
\]
$x\in\Gamma$, and therefore we deduce from \eqref{g-log-requirement}, \eqref{g-jump}, and \eqref{g-V-jump} that $\mathbf T_n(z)$ solves \rht:
\begin{itemize}
\label{rht}
\item[(a)] $\mathbf T_n(z)$ is analytic in $\C\setminus\Gamma$ and $\lim_{\C\setminus\Ga\ni z\to\infty}\mathbf T_n(z)=\mathbb I$;
\item[(b)] $\mathbf T_n(z)$ has continuous boundary values on $\Gamma\setminus\{\pm a_2,\pm b_2\}$ that satisfy
\[
\mathbf T_{n,+}(x) = \mathbf T_{n,-}(x)  \left\{
\begin{array}{ll}
\begin{bmatrix} 1 & \ee^{n(\eta_{+}(x)+(n-N_n)V(x)}\\0&1\end{bmatrix}, & x\in\Gamma(e^{\ii\pi}\infty,-b_2), \medskip \\
\begin{bmatrix} 1 & \ee^{n\eta(x)+(n-N_n)V(x)} \\ 0 & 1 \end{bmatrix}, & x\in\Gamma(b_2,{\ee^{0\pi\ii}}\infty), \medskip \\
\begin{bmatrix} \ee^{n\pi \ii } & \ee^{n(\eta_{ +}(x) - \pi \ii) +(n-N_n)V(x)}\\ 0 & \ee^{-n\pi\ii } \end{bmatrix}, & x\in\Gamma(-a_2,a_2) ,\medskip \\
\begin{bmatrix} \ee^{-n\eta_{ +}(x)} & \ee^{(n-N_n)V(x)} \\ 0 & \ee^{n\eta_{+}(x)} \end{bmatrix}, & x\in \Ga(-b_2, -a_2) \cup \Gamma(a_2,b_2),
\end{array}
\right.
\]

\end{itemize}

Clearly, if \hyperref[rht]{\rht} is solvable and $\mathbf T_n(z)$ is the solution, then by inverting \eqref{rh2} one obtains a matrix $\mathbf Y_n(z)$ that solves \hyperref[rhy]{\rhy}.

To proceed, we observe the factorization of the jump matrices on $J_{t}$
\begin{multline*}
\begin{bmatrix} \ee^{-n\eta_{ +}(x)} & \ee^{(n-N_n)V(x)} \\ 0 & \ee^{n\eta_{+}(x)} \end{bmatrix} \\= \begin{bmatrix} 1 & 0 \\ \ee^{-n\eta_{ -}(x) - (n - N_n)V(x)} & 1 \end{bmatrix} \begin{bmatrix} 0 & \ee^{(n - N_n)V(x)}\\ -\ee^{-(n - N_n)V(x)} & 0 \end{bmatrix} \begin{bmatrix} 1 & 0 \\ \ee^{-n\eta_{ +}(x) - (n - N_n)V(x)} & 1 \end{bmatrix}.
\end{multline*}
This motivates the following transformation. Denote by $J_{\pm }$ smooth contours connecting $\pm a_2$ and $\pm b_2$ and otherwise remaining in the region $\Re[\eta(z)] >0$, as shown in Figure \ref{fig:two-cut-schematic}.
\begin{figure}
    \centering
    \includegraphics[scale=0.25]{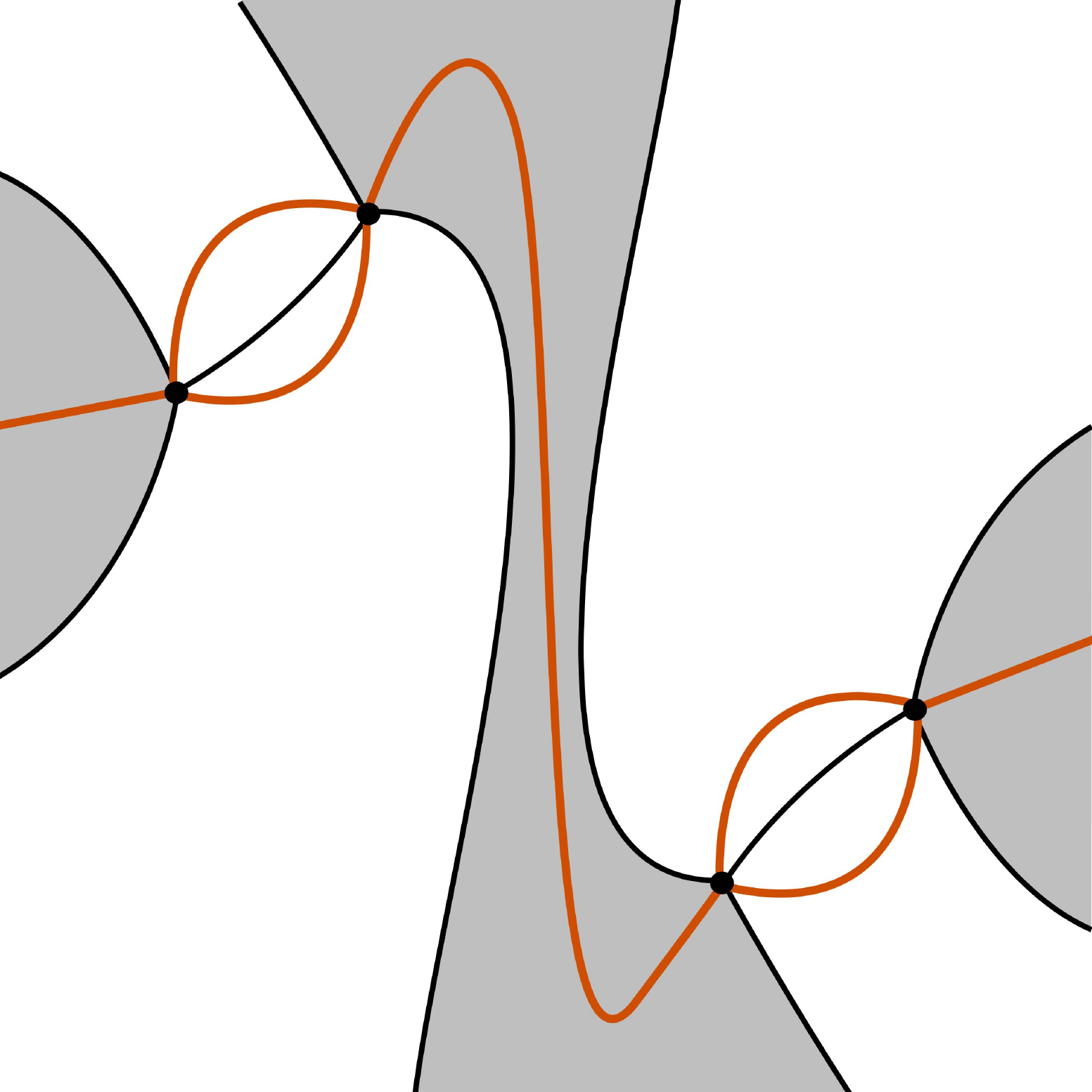}
    \put(-160,150){\small $J_+$}
    \put(-130,115){\small $J_-$}
    \put(-65,70){\small $J_+$}
    \put(-40,29){\small $J_-$}
    \caption{Schematic representation of the jump contour of $\mb S_n(z)$. The black contours and shaded regions are as in Figure \ref{fig:eta-sign-chart}.}
    \label{fig:two-cut-schematic}
\end{figure}
Denote the regions bounded by $J_{t}$ and $J_{\pm}$ by $L_{\pm}$ and define
\begin{equation}
\label{rh4}
    \mathbf{S}_n(z):=\mathbf T_n(z) \left\{ \begin{array}{ll}
       \begin{bmatrix} 1 & 0 \\ \mp \ee^{-n \eta(z) - (n - N_n)V(z)} & 1\end{bmatrix}  & z \in L_{\pm},   \medskip \\
        \mathbb{I}, & \text{otherwise}.
    \end{array} \right.
\end{equation}
Then, $\mathbf S_n(z)$ solves \rhs:
\begin{enumerate}[label=(\alph*)]\label{rhs}
    \item $\mathbf S_n (z)$ is analytic in $\C \setminus (\Ga \cup J_{\pm})$ and $\lim_{\C \setminus \Ga \ni z \to \infty} \mathbf S_n (z) = \mathbb{I}$, 
    \item $\mathbf S_n (z)$ has continuous boundary values on $\Ga \cup J_{\pm} \setminus \{\pm a_2, \pm b_2\}$ which satisfy 
    \[
    \mathbf S_{n,+}(x) = \mathbf S_{n,-}(x) \left\{
    \begin{array}{ll}
    \begin{bmatrix} 1 & \ee^{n(\eta_{ +}(x)+(n-N_n)V(x)}\\0&1\end{bmatrix}, & x\in\Gamma(e^{\ii\pi}\infty,-b_2), \medskip \\
    \begin{bmatrix} 1 & \ee^{n\eta(x)+(n-N_n)V(x)} \\ 0 & 1 \end{bmatrix}, & x\in\Gamma(b_2,{\ee^{0\pi\ii}}\infty), \medskip \\
    \begin{bmatrix} \ee^{n\pi \ii } & \ee^{n(\eta_{ +}(x) - \pi \ii) +(n-N_n)V(x)}\\ 0 & \ee^{-n\pi\ii } \end{bmatrix}, & x\in\Gamma(-a_2,a_2) ,\medskip \\
    \begin{bmatrix} 0 & \ee^{(n-N_n)V(x)} \\ -\ee^{-(n-N_n)V(x)} & 0 \end{bmatrix}, & x\in \Ga(-b_2, -a_2) \cup \Gamma(a_2,b_2), \medskip \\
    \begin{bmatrix} 1 & 0 \\  \ee^{-n \eta(x) - (n - N_n)V(x)} & 1\end{bmatrix}, & x \in J_\pm. 
    \end{array}
    \right.
    \]
\end{enumerate}

\subsection{Global parametrix} \label{sec:global}
If follows from the choice of contour $\Ga$ and the discussion at the end of Section \ref{sec:two-cut-g} (see e.g. Figure \ref{fig:eta-sign-chart}) that jumps of $\mathbf S_n(z)$ on $\Ga\setminus \Ga(-b_2, b_2)$ are exponentially\footnote{However, this exponential estimate is not uniform; we will handle this in the next subsection.} close to the identity, which motivates introducing the following model Riemann-Hilbert problem, \rhn. 
\begin{enumerate}[label=(\alph*)] \label{rhn}
\item $\mathbf N_n (z)$ is analytic in $\overline{\C} \setminus \Ga[-b_2, b_2]$ and $\mathbf{N}(\infty) = \mathbb{I}$, 
\item $\mathbf N_n (z)$ has continuous boundary values on $\Ga(-b_2, b_2) \setminus \{ \pm a_2 \}$ which satisfy
\[
\mathbf{N}_{n,+}(x) = \mathbf{N}_{n,-}(x) \left\{ \begin{array}{ll}
\begin{bmatrix} 0 & \ee^{(n-N_n)V(x)} \\ -\ee^{-(n-N_n)V(x)} & 0 \end{bmatrix}, & x\in \Ga(-b_2, -a_2) \cup \Gamma(a_2,b_2), \medskip \\
\begin{bmatrix} \ee^{n\pi \ii } & 0\\ 0 & \ee^{-n\pi\ii } \end{bmatrix}, & x\in I_t.
\end{array}\right.
\]
\end{enumerate}
 
In the next subsection, we solve \hyperref[rhn]{\rhn} explicitly.

\subsubsection{$\mathbf{N}_n(z)$ when $n \in 2\N$} Let 
\begin{equation}
\label{N-even}
    \mathbf N_n(z) := \mathcal{D}^{-(N_n - n) \sigma_3}(\infty)\begin{bmatrix} A(z) & -B(z) \\ B(z) & A(z) \end{bmatrix} \mathcal{D}^{(N_n - n) \sigma_3}(z),
\end{equation}
where $A(z), B(z)$, and $\mathcal{D}(z)$ are defined in \eqref{A-B-def} and Proposition \ref{Szego-prop}, respectively. \hyperref[rhn]{\rhn}(a) follows from the analyticity properties of $A(z), B(z), D(z)$, and equations \eqref{A-B-lim} 
 and \eqref{Szego-lim}. \hyperref[rhn]{\rhn}(b) follows from \eqref{A-B-jump} and \eqref{Szego-jump}. 

\subsubsection{$\mathbf{N}_n(z)$ when $n \in \N \setminus 2\N$} Let 
\[
\mathbf{M}_n(z) := \begin{bmatrix} 
(A/\Theta^{(0)})(z) & -(B/\Theta^{(1)})(z)\\ (\Theta^{(0)}B)(z) & (\Theta^{(1)}A)(z)
\end{bmatrix}  \mathcal{D}^{(N_n - n) \sigma_3}(z)
\]
where $\Theta(\z)$ is defined in \eqref{theta-k} and superscripts $(k)$ are explained in Section \ref{subsec:RS}. It follows from analyticity properties of $\Theta(\z)$ and Proposition \ref{prop-jip} that $\Theta^{(0)}(\infty)$ and consequently $\Theta^{(1)}(\infty)$ are finite and non-vanishing. Then, combining this with \eqref{A-B-lim} and boundedness/non-vanishing of $\mathcal{D}(\infty)$, we have that $\mathbf{M}(\infty)$ exists and 
\[
\lim_{z \to \infty} \det \mathbf M_n(z) =  \dfrac{ \Theta^{(1)}(\infty)}{\Theta^{(0)}(\infty)} \neq 0.
\]
Hence, the matrix
\begin{equation}
\label{N-odd}
\mathbf N_n(z) := \mathbf M_n^{-1}(\infty) \mathbf{M}_n(z) ,
\end{equation}
exists for all $n \in \N\setminus 2\N$. Furthermore, $\mathbf N_n(z)$ satisfies \hyperref[rhn]{\rhn}(a) due to the analyticity properties of $\Theta(\z)$, $A(z), B(z),$ and $D(z)$. \hyperref[rhn]{\rhn}(b) follows from \eqref{Szego-jump}, \eqref{A-B-jump}, and \eqref{theta-jump}. 

It follows from \eqref{N-even}, \eqref{N-odd} that
\begin{enumerate}[label = (\roman*)]
\item for any $e \in \{\pm a_2, \pm b_2 \}$, we have that \( \mathbf N_n(z) = \Oo\left( |z - e|^{-1/4} \right)\) as $z \to e$ (entry-wise), and
\item $\det \left(\mathbf N_n(z) \right) = 1$.
\end{enumerate}
Indeed, (i) follows immediately from the definition of $A(z), B(z), D(z)$ and $\Theta(\z)$. For (ii), note that $\det\left(\mathbf N_n(z) \right)$ is an analytic function in $\C \setminus \{\pm a_2, \pm b_2\}$. Furthermore, $\det \left(\mathbf N_n(z) \right)$ has at most square root growth at each of $\{ \pm a_2, \pm b_2\}$, making them removable singularities. Liouville's theorem implies $\det \left(\mathbf N_n(z) \right)$ is constant, and the normalization at infinity implies (ii).

\subsection{Local parametrices} To handle the fact that the jumps removed in \hyperref[rhn]{\rhn} are not uniformly close to identity, we construct parametrices near the endpoints $\{\pm a_2, \pm b_2\}$ which solve \hyperref[rhs]{\rhs} in a neighborhood $U_e, \ e \in \{\pm a_2, \pm b_2\}$ \emph{exactly}. Furthermore, we require that this local solution ``matches" the global parametrix on $\partial U_e$. Specifically, fix $e \in \{\pm a_2, \pm b_2\}$ and let 
\[
U_e := \{z \ : \ |z - e| < \delta_{e}(t) \},
\]
where $\delta_e(t)$ is a radius to be determined. We seek a solution to the following Riemann-Hilbert problem (\rhp)
\begin{enumerate}[label = (\alph*)]
\label{rhp}
\item $\mathbf P_{e,n}(z)$ has the same analyticity properties as $\mathbf S_n(z)$ restricted to $U_e$, see \hyperref[rhs]{\rhs}(a);
\item $\mathbf P_{e,n}(z)$ satisfies the same jump relations as $\mathbf S_n(z)$ restricted to $U_e$, see \hyperref[rhs]{\rhs}(b);
\item $\mathbf P_{e,n}(z)=\mathbf N_n(z)\left(\mathbb I+\Oo(n^{-1})\right)$ holds uniformly on $\partial U_e$ as $n\to\infty$.
\end{enumerate}

\subsubsection{Model Parametrix.}
Let \( L_\theta := \left \{ r \ee^{\ii \theta} \ : \ r >0 \right \} \). We will solve \hyperref[rhp]{\rhp} with the help of the Airy matrix $\mb A(\zeta)$, see e.g. \cites{MR1702716,MR1858269} (see also \cite{MR1677884}), whose construction is by now standard and which solves the following Riemann-Hilbert problem:
\begin{enumerate}[(a)]
    \item $\mb A(\zeta )$ is analytic in $\C \setminus (\R \cup L_{\pm 2\pi /3})$, 
    \item $\mb A(\zeta )$ has continuous boundary values on $\R \cup L_{\pm 2\pi /3}$ which satisfy
    \[
    \mathbf A_+(s) = \mathbf A_-(s) \left\{
    \begin{array}{ll}
    \begin{bmatrix} 0 & 1 \\ -1 & 0 \end{bmatrix}, & s\in (-\infty,0), \medskip \\
    \begin{bmatrix} 1 & 0 \\ 1 & 1 \end{bmatrix}, & s\in L_{\pm2\pi /3}, \medskip \\
    \begin{bmatrix} 1 & 1 \\ 0 & 1 \end{bmatrix}, & s\in (0,\infty),
    \end{array}
    \right.
    \]
    where the real line is oriented from $-\infty$ to $\infty$ and the rays $L_{\pm2\pi /3}$ are oriented towards the origin. 
    \item $\mb A(\zeta)$ has the following asymptotic expansion\footnote{We say that a function \( F(p) \), which might depend on other variables as well, admits an asymptotic expansion \( F(p) \sim \sum_{k=1}^\infty c_k \psi_k(p) \), where the functions \( \psi_k(p) \) depend only on \( p \) while the coefficients \( c_k \) might depend on other variables but not \( p \), if for any natural number \( K \) it holds that \( F(p)-\sum_{k=0}^{K-1}c_k \psi_k(p) =  \Oo_K(\psi_K(p)) \).} at infinity:
    \begin{equation}
    \label{rh6}
    \mathbf A(\zeta)\ee^{\frac23\zeta^{3/2}\sigma_3} \sim \frac{\zeta^{-\sigma_3/4}}{\sqrt2}\sum_{k=0}^\infty \begin{bmatrix} s_k & 0 \\ 0 & t_k \end{bmatrix} \begin{bmatrix} (-1)^k & \mathrm i \\ (-1)^k \mathrm i & 1 \end{bmatrix}  \left(\frac23\zeta^{3/2}\right)^{-k},
    \end{equation}
    uniformly in $\C\setminus\big(\R \cup L_{\pm 2\pi /3}\big)$, where
    \[
    s_0=t_0=1, \quad s_k=\frac{\Gamma(3k+1/2)}{54^kk!\Gamma(k+1/2)}, \quad t_k=-\frac{6k+1}{6k-1}s_k, \quad k\geq1.
    \]
\end{enumerate}

\subsubsection{Local coordinates.}
In this section, it will be convenient to denote
\begin{equation*}
\label{J-I-e}
    J_e := J_t \cap U_e, \quad I_e := \left(\Ga \cap U_e\right) \setminus J_e.
\end{equation*}
To transform \hyperref[rhp]{\rhp} to the model problem above, we will need to study the function $\eta_{ e}(z)$ as $ z\to e$. To this end, observe that the choice of of $\Ga_t$ in Theorem \ref{thm:2-cut} implies 
\[
\eta_{ e}(z) < 0 \qforq z \in I_e,
\]
and that 
\[
\eta_{ e}(z) = \pm 2\pi \ii \varsigma(e) \mu_t \left( \Ga_{t}[e, z]\right) = \ee^{\mp \varsigma(e) \frac{3\pi \ii}{2}} 2\pi  \mu_t \left( \Ga_{t}[e, z]\right), \quad z \in J_{e}
\]
where 
\begin{equation}
\label{var-sigma}
\varsigma(e) = \left\{ \begin{array}{ll} -1, & e \in \{-b_2, a_2\}, \\ 1, & e\in \{b_2, -a_2\}.  \end{array} \right.
\end{equation}
In particular, since $\eta_{ e}(z) \sim |z - e|^{\frac32}$ as $z \to e$, we may define a holomorphic branch of $\zeta_e(z):=(-3n \eta_{ e}(z)/4)^{\frac23}$ so that $\eta_{e}(I_e) \subset (0, \infty)$, $\eta_{e}(J_{e}) \subset (-\infty, 0)$, and $\eta_{e}$ is conformal in a sufficiently small neighborhood of $e$; we choose $\delta_{e}(t)>0$ so that $\eta_{e}$ is conformal in $U_e$. 

\begin{remark}
    We note that for any given compact $K \subset \Oo_2$, we may (and will) choose $\delta_e(t)$ so that they are continuous and separated from zero on $K$ by, say, choosing $2 \delta_e(t) = \left( \min \{2\pi , |\eta_{e}(0)|\}\right)^{2/3}$. This follows from the Basic Structure Theorem \cite{MR0096806}*{Theorem 3.5} and arguments carries out in \cite{MR3607591}*{Section 6.4}. One of the technical difficulties in extending our asymptotic results to hold uniformly on unbounded subsets of $\Oo_2$ is choosing such radii that remain bounded away from zero as $t \to \infty$. This was done in \cite{MR3607591}*{Section 6.4} and requires careful estimates of the quantity $\delta_e(t)$ above. While similar analysis can be carried out here, for the sake of brevity we settle for locally uniform asymptotics. 
\end{remark}

\subsubsection{Solution of \hyperref[rhp]{\rhp}.} Let 
\begin{equation}
    \mathbf{P}_{e, n}(z) := \mathbf E_e(z) \mathbf{A}_e(\zeta_e(z)) \ee^{\frac23 \zeta^{3/2}_{e}(z) \sigma_3} \mathbf J_e(z),
    \label{eq:local-parametrix}
\end{equation}
where 
\begin{itemize}
\item $\mathbf E_e(z)$ is a holomorphic prefactor to be determined, 
\item $\mathbf A_e(\zeta) := \mathbf{A}(\zeta)$ for $e \in \{-a_2, b_2\}$ and $\mathbf A_e(\zeta) := \sigma_3\mathbf{A}(\zeta) \sigma_3$ otherwise. The conjugation by $\sigma_3$ has the effect of reversing the orientation of $\R \cup L_\pm$,
\item we define  
\begin{equation}
    \mathbf J_e (z) := \ee^{\frac12 (N_n - n)V(z)\sigma_3} \left\{ \begin{array}{ll}
    \mathbb{I}, & e \in\{ \pm b_2 \}, \\
    \ee^{\pm \frac{\pi \ii}{2} n \sigma_3}, & e \in \{ \pm a_2 \},
    \end{array}\right.
    \label{eq:sectional-fun}
\end{equation}
and we choose the $+/-$ sign when $z$ is to the left/right of $\Ga$, respectively.
\end{itemize}
Indeed, \hyperref[rhp]{\rhp}(a) follows directly from the analyticity properties of $\mathbf A_e(\zeta)$, $\mathbf{J}_e(z)$, and $\zeta_e(z)$. Using \eqref{g-V-eta}, \eqref{g-jump}, \eqref{g-V-jump}, and \eqref{eta-e-b}, we see that for any $e \in \{\pm a_2, \pm b_2 \}$
\begin{equation*}
(g_+ + g_- - V - \ell_*)(z)  = \dfrac{1}{2} \left( \eta_{e, +} + \eta_{e, -} \right)(z), \quad z \in \Ga
\end{equation*}
and 
\begin{equation}
\label{eta-e-jump}
(g_+ - g_-)(z) = \dfrac{1}{2}\left( \eta_{e, +} - \eta_{e, -} \right)(z) + \left\{ \begin{array}{ll}
0, & e = b_2, \\
\pi \ii, & e\in \{ \pm a_2 \}, \\
2\pi \ii, & e =-b_2,
\end{array}\right. \quad z \in \Ga.
\end{equation}

Using \eqref{eta-e-jump} along with \eqref{g-jump}, \eqref{g-V-jump}, one may directly verify \hyperref[rhp]{\rhp}(b). Finally, to satisfy \hyperref[rhp]{\rhp}(c), we choose 
\begin{equation}
\mathbf E_e (z) := \mathbf N_n(z) \mathbf J_e^{-1}(z) \dfrac{1}{\sqrt{2}}\begin{bmatrix} 1 & - \ii \varsigma(e) \\ - \ii \varsigma(e) & 1  \end{bmatrix} \zeta_e^{1/4}(z),
    \label{eq:hol-prefactor}
\end{equation}
where $\varsigma(e)$ is as as in \eqref{var-sigma}. It follows from the definition of $\mathbf N_n(z), \mathbf J_e(z)$, and jump conditions \hyperref[rhn]{\rhn}(b) that $\mathbf E_e(z)$ is analytic in $U_e \setminus \{e\}$. At $z=e$, it follows from the discussion in Section \ref{sec:global} and the conformality of $\zeta_e(z)$ that $\mathbf E_e(z)$ has at most a square-root singularity at $e$, making it a removable singularity and yielding the desired analyticity. \\

Using \eqref{rh6}, the definition of $\mathbf P_{ e, n}(z)$, and  the definition of $\mathbf E_e(z)$, it follows that for $z \in \partial U_e$,\footnote{Strictly speaking, this is not an asymptotic expansion in the sense of the previous subsection since coefficients $\mathbf{C}_{e, k}$ still depend on $n$. However, for any $K \in \N$, the estimate $\mathbf{P}_{e, n}(z) - \sum_{k = 0}^{K-1} n^{-k} \mathbf{C}_{e, k}(z) = \Oo_K(n^{-K})$ still holds. \label{footnote:asymptotic-expansion}}
\begin{equation}
\label{local-parametrix-asymptotics}
\mathbf{P}_{e, n}(z) \sim \mathbf{N}_n(z)
 \left( \mathbb{I} + \dfrac{1}{n} \sum_{k = 0}^\infty  \dfrac{\mathbf{C}_{e,k}(z;t)}{n^k}\right) \qasq n \to \infty,
 \end{equation}
 where $\mathbf C_{e, k}(z; t)$ can be calculated using \eqref{rh6}, \eqref{eq:local-parametrix}, \eqref{eq:sectional-fun}, and \eqref{eq:hol-prefactor}. We will not use these formulas and so omit them, but we do point out that $\mathbf{C}_{e,k}(z; t)$ depend on $t$ via $\eta_{ e}(z; t)$ and $V(z; t)$.

\subsection{Small-norm Riemann-Hilbert problem}
\begin{figure}
    \centering
    \includegraphics[scale=0.25]{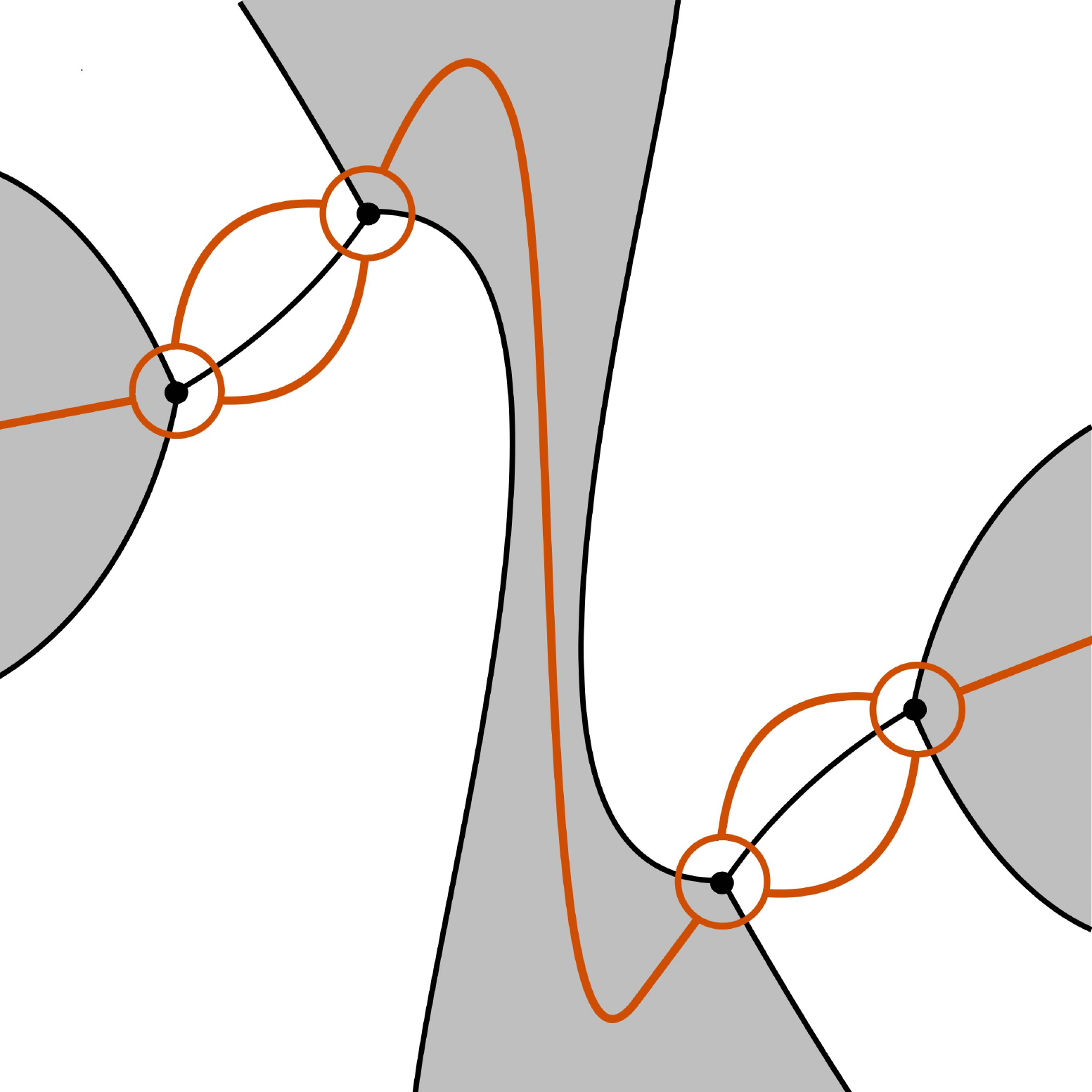}
    \caption{Schematic representation of $\Sigma$ (orange). The black contours and shaded regions are as in Figure \ref{fig:eta-sign-chart}.}
    \label{fig:small-norm-jump}
\end{figure}
Let $$\Sigma = \left[\left(\Ga \cup J_t \cup J_+ \cup J_- \right) \cap (\C \setminus \cup_e U_e) \right] \cup \left( \cup_e \partial U_e \right),$$ shown schematically in Figure \ref{fig:small-norm-jump}. Consider the following Riemann-Hilbert problem \rhr:
\begin{enumerate}[label=(\alph*)] \label{rhr}
    \item $\mathbf R_n(z)$ is holomorphic in $\C \setminus \Sigma$ and $\lim_{\C \setminus \Ga \ni z \to \infty} \mathbf R_n (z) = \mathbb{I}$, 
    \item $\mathbf R_n(z)$ has continuous boundary values on points of $\Sigma$ with a well-defined tangent and 
    \[
    \mathbf R_{n,+}(x) =  \mathbf R_{n,-}(x) \left\{ \begin{array}{ll}
    \mathbf P_{e, n}(x) \mathbf N_n^{-1}(x), & x \in \partial U_e, \medskip \\
    \mathbf N_n(x) \begin{bmatrix} 1 & 0 \\ \ee^{-n \eta(x) - (n - N_n)V(s)} & 1 \end{bmatrix} \mathbf N_n^{-1}(x), & s \in J_{\pm} \cap \C \setminus \cup_e U_e.
    \end{array}\right.
    \]
    where $\partial U_e$ are oriented clockwise, and (recall that $\det \mathbf N_n(z) = 1$, see Section \ref{sec:global})
    \[
    \mathbf R_{n,+}(x) =  \mathbf R_{n,-}(x) \left\{ \begin{array}{ll}
\mathbf N_n(x) \begin{bmatrix} 1 & \ee^{n(\eta_{ +}(x)+(n-N_n)V(x)}\\0&1\end{bmatrix} \mathbf N_n^{-1}(x) , & x\in\Gamma(e^{\ii\pi}\infty,-b_2), \medskip \\
\mathbf N_n(x) \begin{bmatrix} \ee^{-n\eta_{ +}(x)} & \ee^{(n-N_n)V(x)} \\ 0 & \ee^{n\eta_{+}(x)} \end{bmatrix}\mathbf N_n^{-1}(x) , & x\in \Ga(-b_2, -a_2),  \medskip \\
\mathbf N_n(x) \begin{bmatrix} 1 & \ee^{n\eta(x)+(n-N_n)V(x)} \\ 0 & 1 \end{bmatrix} \mathbf N_n^{-1}(x) , & x\in\Gamma(b_2,{\ee^{0\pi\ii}}\infty), \medskip \\
\mathbf N_{n,-}(x) \begin{bmatrix} \ee^{n\pi \ii } & \ee^{n(\eta_{ +}(x) - \pi \ii) +(n-N_n)V(x)}\\ 0 & \ee^{-n\pi\ii } \end{bmatrix}\mathbf N^{-1}_{n, +}(x) , & x\in I_t ,\medskip \\
\mathbf N_n(x) \begin{bmatrix} \ee^{-n\eta_{ +}(x)} & \ee^{(n-N_n)V(x)} \\ 0 & \ee^{n\eta_{+}(x)} \end{bmatrix}\mathbf N_n^{-1}(x) , & x\in  \Gamma(a_2,b_2).
\end{array} \right.
    \]
\end{enumerate}

We now show that $\mathbf R_n(z)$ satisfies a so-called "small-norm" Riemann-Hilbert problem. The estimates in this section are fairly well-known at this point, but since we also consider the dependence on the parameter $t$, we point to \cite{MR3607591}*{Section 7.6} as a general reference. Consider 
\begin{equation}
\label{delta-def}
\mathbf \Delta(x;t) := (\mathbf R_{n, -}^{-1} \mathbf R_{n,+} )(x) - \mathbb{I}, \quad x \in \Sigma.
\end{equation}

It follows from the explicit formulas in Section \ref{sec:global} that $\mathbf N_n(z)$ is uniformly bounded on $\Sigma$. This, the choice of $\Ga$ (see \eqref{re-eta} and the surrounding discussion), the fact that $|n - N_n|$ is bounded for all $n\in \N$, and the jump of $\mathbf N_n(z)$ on $I_t$ implies that there exists $c(t; \delta)>0$, where $\delta = \min_{e} \delta_e$, so that as $n \to \infty$
\begin{equation}
\mathbf \Delta (x; t) = \Oo \left(\ee^{-c(t; \delta) n} \right) \qforq x \in (J_\pm \cap (\C \setminus \cup_e U_e) ) \cup I_t \cup \Ga(-\infty, -b_2) \cup \Ga(b_2, \infty).
\label{eq:estimate-lenses}
\end{equation}
Since $\mb \Delta(x;t)$ is continuous in $t$, we can choose the exponent $c(t; \delta)$ to be continuous in $t$ as well, and by compactness we find that \eqref{eq:estimate-lenses} holds $(x, t)$-locally uniformly. Similarly, by \eqref{local-parametrix-asymptotics} and the definition of $\mathbf{\Delta}(x;t)$, it follows that for $x \in \partial U_e$
\begin{equation}
\mathbf{\Delta}(x; t) \sim \sum_{k = 1}^\infty \dfrac{\mathbf N_n(x; t) \mathbf C_{e,k}(x; t) \mathbf N_n^{-1}(x; t)}{n^k} \qasq n \to \infty.
\label{eq:estimate-boundary-U}
\end{equation}
Once again, by continuity we have that estimate \eqref{eq:estimate-boundary-U} holds $(x,t)$-locally uniformly. Put together, the above estimates yield that $\| \mathbf{\Delta} \|_{L^\infty(\Sigma)} = \Oo\left(n^{-1} \right)$ $t$-locally uniformly. Furthermore, on the unbounded components of $\Sigma$ we have 
\[
\mathbf \Delta(x;t) = \mathbf N_n(x;t) \begin{bmatrix} 1 & \ee^{n(\eta_{ +}(x;t)+(n-N_n)V(x;t)}\\0&1\end{bmatrix} \mathbf N_n^{-1}(x;t),
\]
and since $\mathbf N_n(z)$ is bounded as $x \to \infty$ and $\ee^{n\eta(x;t)}$ is exponentially small as $x \to \infty$, it follows that 
\begin{equation*}
\label{eq:Delta-est}
\|\mathbf{\Delta} \|_{L^\infty(\Sigma) \cap L^2(\Sigma)} = \Oo\left(n^{-1} \right).
\end{equation*}
Hence, applying \cite{MR1677884}*{Corollary 7.108}, yields that for $n$ large enough, a solution $\mathbf R_n(z)$ exists and 
\begin{equation}
\label{R-estimate}
\mathbf R_n(z) = \mathbb I + \Oo \left( n^{-1}\right) \qasq n \to \infty,
\end{equation}
$(z, t)$-uniformly in $\C$. In the next subsection, we use this to conclude asymptotic formulas for polynomials $P_n(z;t, N_n)$ and recurrence coefficients $\gamma^2_n(t, N_n)$.

\subsection{Asymptotics of \texorpdfstring{$P_n(z;t)$}{the Orthogonal Polynomials}.}
Given $\mathbf R_n(z)$, \( \mathbf N_n(z) \), and \( \mathbf P_{e,n}(z) \), solutions of \hyperref[rhr]{\rhr}, \hyperref[rhn]{\rhn}, and \hyperref[rhp]{\rhp}, respectively, one can verify that \hyperref[rhs]{\rhs} is solved by
\begin{equation}
\label{rh17}
\mathbf S_n(z) = \left\{
\begin{array}{ll}
(\mathbf R_n\mathbf N_n)(z), &  \quad (\C \setminus \overline{U}_e)\setminus[(\Ga\setminus J_t)\cup J_+\cup J_-], \medskip \\
(\mathbf R_n \mathbf P_{e, n})(z), &  \quad U_e,~~~e\in\{\pm a_2,\pm b_2\}.
\end{array}
\right.
\end{equation}
Let $K$ be a compact set in $\C\setminus\Ga$. By choosing $J_{\pm}$ appropriately, we can arrange for $K$ to lie entirely withing the unbounded component of $\C \setminus \Sigma$. Then it follows from \eqref{rh2}, \eqref{rh4}, and \eqref{rh17} that
\begin{equation}
\label{rh18}
\mathbf Y_n(z) = \ee^{\frac{n}{2}\ell_*\sigma_3}(\mathbf{R}_n \mathbf{N}_n)(z) \ee^{n(g(z)-\frac12 \ell_*)\sigma_3}, \quad z\in K.
\end{equation}
Subsequently, by using \eqref{rh1} and \eqref{g-V-eta}, we see that 
\[
P_n(z) = [\mathbf Y(z)]_{11} = \ee^{ \frac n2(\eta(z) + V(z)+\ell_*)}\left([\mathbf R_n(z)]_{11}[\mathbf N_n(z)]_{11}+[\mathbf R_n(z)]_{12}[\mathbf N_n(z)]_{21}\right).
\]
Using formulas \eqref{N-even}, \eqref{N-odd}, estimate \eqref{R-estimate}, and the definitions of the entries of $\mb N_n(z)$ yields \eqref{p-asymp-outside}.

\subsection{Asymptotics of \texorpdfstring{$\gamma_n^2(t)$}{Recurrence Coefficients}} \label{sec:gamma-asymp}
It follows from Theorem \ref{thm:asymptotics-poly} that for $n$ large enough, $\deg P_n = n$, and in this section we will use the notation $$P_n(z;t, N) = z^n +  \sum_{k = 0}^{n - 1} \mathfrak{p}_{n, k}(t, N) z^k.$$ Furthermore, we will drop the $n$ subscript (e.g. $\mathbf N_n \mapsto \mathbf N$) and use notation
\[
\mathbf K(z)  =  \mathbb I + z^{-1} \mathbf K_1(n,t,N) + z^{-2} \mathbf K_2(n,t,N) + z^{-3} \mathbf K_3(n,t,N) + \mathcal{O}\big(z^{-4}\big) \quad \text{as} \quad z \to \infty,
\]
where \(  \mathbf K \in \{ \mathbf M, \mathbf N, \mathbf R, \mathbf T ,\mathbf Y z^{-n\sigma_3}  \}  \). It follows from the orthogonality relations \eqref{eq:ortho} and \eqref{eq:normalizing} that
\begin{align*}
z^n\big(\mathcal{C}P_n \ee^{-NV}\big)(z) &= -\frac{1}{z}\frac1{2\pi\ii} \int_\Ga x^nP_n(x) \ee^{-NV(x)}\dd x - \frac{1}{z^2}\frac1{2\pi\ii} \int_\Ga x^{n+1}P_n(x)\ee^{-NV(x)}\dd x + \mathcal O\left( z^{-3}\right) \\
&=  \frac{h_n(t,N)}{2\pi\ii}\left(- \frac1z + \frac{ \mathfrak{p}_{n+1, n}(t, N)}{z^2}\right) + \mathcal O\left( z^{-3}\right),
\end{align*}
where the second expression for the coefficient next to \( z^{-2} \) follows from the observation
\begin{multline*}
\int_\Ga x^{n+1}P_n(x) \ee^{-NV(x)}\dd x \\
= \int_\Ga \left(x^{n + 1} - P_{n + 1}(x)\right) P_n(x) \ee^{-NV(x)}\dd x = \mathfrak{p}_{n+1, n}(t, N) \int x^nP_n(x) \ee^{-NV(x)}\dd x,
\end{multline*}
where all polynomials correspond to the same parameter \( N \). However, since the weight of orthogonality is an even function, we have $P_n(-z;t, N) = (-1)^n P_n(z;t,N)$, i.e. polynomials $P_n$ are even/odd functions. In particular $\mathfrak{p}_{n+1, n}(t, N) = 0$. Hence, it follows from \eqref{rh1} that 
\begin{equation*}
\begin{aligned}
\label{rc2}
{\mathbf Y}_1(n,t,N) &= \begin{bmatrix} 0 & - \frac{h_n(t,N)}{2\pi \ii} \medskip \\  - \frac{2\pi \ii}{h_{n - 1}(t,N)} & 0  \end{bmatrix} \\
\mathbf Y_2(n,t, N) &= \begin{bmatrix} \mf p_{n,n-2}(t, N) &  0 \medskip \\ 0 & *  \end{bmatrix}, \\
\mathbf Y_3(n,t, N) &= \begin{bmatrix} 0 &  * \medskip \\ - \frac{2\pi \ii}{h_{n - 1}(t,N)} \mf p_{n-1,n-3}(t, N) & *  \end{bmatrix}.
\end{aligned}
\end{equation*}
This along with \eqref{eq:gamma} yields the formulae
\begin{equation*}
\label{rc3}
\begin{cases}
h_n(t, N) & = -2\pi \ii [\mathbf Y_1(n,t, N)]_{12} , \medskip \\
\gamma_n^2 (t, N) & = [\mathbf Y_1(n,t, N)]_{12} [\mathbf Y_1(n, t,N)]_{21}, \medskip \\
\mf p_{n, n-2}(t, N) & = [\mb Y_2(n, t, N)]_{11}, \medskip\\
\mf p_{n-1, n-3}(t, N) & = \dfrac{[\mb Y_3(n, t, N)]_{21}}{[\mathbf Y_1(n, t,N)]_{21}} .
\end{cases}
\end{equation*}

Next, using \eqref{eq:g-explicit} we have
\begin{equation}
\label{rc4}
\ee^{-ng(z) \sigma_3} = z^{-n\sigma_3} \left( \mathbb I  - \dfrac{nt}{2z^2} \sigma_3 + {\Oo} \left( z^{-4} \right) \right).
\end{equation}
Combining \eqref{rc4} with \eqref{rh2} yields
\begin{equation*} 
\label{Y-T}
\begin{cases}
	[\mathbf T_1(n,t, N)]_{12} &= \ee^{-n \ell_*} [\mathbf Y_1(n, N)]_{12}, \medskip \\
	[\mathbf T_1(n,t, N)]_{21} &= \ee^{n \ell_*} [\mathbf Y_1(n,t, N)]_{21}, \medskip\\
 [\mathbf T_1 (n,t, N)]_{11} &= [\mathbf Y_1 (n,t, N)]_{11}, \medskip\\
 [\mathbf T_2(n,t, N)]_{11} &= [\mathbf Y_2(n,t, N)]_{11} - \dfrac{nt}{2}, \medskip\\
  [\mathbf T_3(n,t, N)]_{21} &= \ee^{n\ell_*} \left( [\mathbf Y_3(n,t, N)]_{21} - \dfrac{nt}{2} [\mathbf Y_1(n,t, N)]_{21}\right).
\end{cases}
\end{equation*}
This, in turn, leads to
\begin{equation}
\label{rc6}
\left\{   
\begin{array}{lll}
	h_n(t, N) & = & -2\pi \ii \ee^{n \ell_*} [\mathbf T_1(n, N)]_{12} , \medskip \\
	\gamma_n^2 (t, N) & = & [\mathbf T_1(n,t, N)]_{12} [\mathbf T_1(n, t, N)]_{21}, \medskip\\
 \mf p_{n, n-2}(t, N) & = & [\mb T_2(n, t, N)]_{11} + \dfrac{nt}{2} ,\medskip \\
 \mf p_{n-1, n-3}(t, N) & = & \dfrac{[\mb T_3(n, t, N)]_{21}}{[\mb T_1(n, t, N)]_{21}} + \dfrac{nt}{2} .
 
\end{array}
\right.
\end{equation}
It was shown in \cite{MR1711036}*{Theorem 7.10}\footnote{In the reference, fractional powers of $n$ appear because of the dependence of the conformal maps $\varphi_n$ (analog of our $\eta_{ e}$) on $n$. This is not the case in, say, \cite{MR1702716}*{Eq. 4.115}.} that $\mathbf R(z)$ admits an asymptotic expansion (in the sense of Footnote \ref{footnote:asymptotic-expansion})
\begin{equation}
	\label{r-expansion}
	\mathbf R(z) \sim \mathbb I + \sum_{k = 1}^{\infty} \dfrac{\mathbf R^{(k)}(z, n)}{n^k}, \quad n \to \infty \quad  \text{ and } \quad z \in \C \setminus \cup_e \partial U_e. 
\end{equation}

The matrices $\mathbf R^{(k)}(z)$ satisfy an additive Riemann-Hilbert Problem: 
\begin{enumerate}
	\item[(a)] $\mathbf R^{(k)}(z)$ is analytic in $\C \setminus \cup_e \partial U_e$,
	\item[(b)] $\mathbf R_+^{(k)}(z) = \mathbf R_-^{(k)}(z) + \sum_{j = 1}^{k} \mathbf R_-^{(k - j)}(z) \mathbf \Delta_j(z)$ for $z \in \cup_e \partial U_e$, where we write
 \[
\mathbf \Delta (z; t) \sim \sum_{k = 1}^\infty \dfrac{\mathbf \Delta_k(z;t)}{n^k} \qasq n \to \infty, 
 \]
 and $\mathbf \Delta(z; t)$ is as in \eqref{delta-def}, and
	\item[(c)] as $z \to \infty,$ $\mathbf R^{(k)}(z)$ admits an expansion of the form
	\begin{equation}
		\label{rk-expansion}
		\mathbf R^{(k)}(z) = \dfrac{\mathbf R_1^{(k)}}{z} + \dfrac{\mathbf R_2^{(k)}}{z^2} + \Oo\left( z^{-3} \right) .
	\end{equation}
\end{enumerate}
Recall that $\mathbf T(z) = (\mathbf R \mathbf N)(z)$ for $z$ in the vicinity of infinity by \eqref{rh2} and \eqref{rh18}. It also follows from their definitions that matrices \( \mathbf N(z) \) form a normal family in \( n\in\N \) in the vicinity of infinity. Plugging expansions \eqref{r-expansion} and \eqref{rk-expansion} into the product $\mathbf T(z) = (\mathbf R \mathbf N)(z)$ yields
\begin{equation}
\label{rc7}
\begin{aligned}
    \mathbf T_1(n, t, N_n) &= \mathbf N_1(n,t, N_n) + \dfrac{\mathbf R^{(1)}_1(t)}{n} +  \Oo\left( n^{-2} \right), \medskip \\
    \mathbf T_2(n, t, N_n) &= \mathbf N_2(n,t, N_n) + \dfrac{\mathbf R_1^{(1)}(t) \mathbf N_1(n,t, N_n) + \mathbf R_2^{(1)}(t)}{n}  +  \Oo \left( n^{-2} \right), \medskip \\
    \mathbf T_3(n, t, N_n) &= \mb N_3(n, t, N_n) + \dfrac{\mb R_1^{(1)}(t) \mb N_2(n,t,N_n) + \mb R_2^{(1)}(t) \mb N_1(n, t, N_n) + \mb R_3^{(1)}(t) }{n} + \Oo\left( n^{-2} \right).
    \end{aligned}
\end{equation}
where normality of $\mathbf N$ is used to ensure boundedness of coefficients $\mathbf N_k(n, t, N)$. To use \eqref{rc7}, we will need to compute $\mathbf N_1, \mathbf N_2, \mb N_3$. To this end, note that by \eqref{Szego-lim}, 
\begin{equation}
\label{eq:cal-D-exp}
\mathcal{D}^{(N-n) \sigma_3}(z) = \ee^{-\frac{1}{2}(N - n) \ell_* \sigma_3} \left( \mathbb{I} + \dfrac{N-n}{4z^4} \sigma_3 + \Oo \left( z^{-6}\right)\right).
\end{equation}
Furthermore, it follows from the definition of $\gamma(z)$ in \eqref{gamma} that
\begin{align*}
\gamma(z) &= 1 + \dfrac{ a_2 - b_2 }{2z} + \dfrac{(a_2 - b_2)^2}{8z^2} +\dfrac{(a_2 - b_2)(3 a_2^2 + 2 a_2 b_2 + 3 b_2^2)}{16z^3} + \Oo \left( z^{-3} \right)  \qasq z \to \infty, \\
\gamma^{-1}(z) &= 1 - \dfrac{a_2 - b_2}{2z} + \dfrac{(a_2 - b_2)^2}{8z^2} - \dfrac{(a_2 - b_2)(3 a_2^2 + 2 a_2 b_2 + 3 b_2^2)}{16z^3} + \Oo \left( z^{-4} \right) \qasq z \to \infty,
\end{align*}
which, in turn, yields 
\begin{equation}
\begin{aligned}
\label{A-B-expansion}
A(z) &= 1 + \dfrac{(a_2 - b_2)^2}{8z^2} + \Oo \left( z^{-4} \right), \\
B(z) &= \dfrac{ a_2 - b_2 }{2\ii z} + \dfrac{(a_2 - b_2)(3a_2^2 + 2a_2 b_2 + 3b_2^2)}{16\ii z^3} + \Oo \left( z^{-5} \right), \qasq z \to \infty.
\end{aligned}
\end{equation}
Put together, the expansions \eqref{eq:cal-D-exp}, \eqref{A-B-expansion} along with the definition of $\mathbf N$ yield that 
\begin{equation}
\label{N-1-even}
\mathbf N_1(n,t, N) = \dfrac{b_2 - a_2}{2 }\ee^{(N-n)\ell_* \sigma_3} \sigma_2 ,
\end{equation}
when $n \in 2\mathbb{N}$. Recalling Proposition \ref{prop-jip}, we can write 
\begin{equation}
\label{N-1-odd}
\mathbf N_1(n,t, N) = \ee^{\frac{1}{2}(N-n)\ell_* \sigma_3} \begin{bmatrix} 
-\dfrac{\dd}{\dd x} \left( \log \Theta^{(0)}(1/x) \right)\biggl|_{x = 0} &  -\dfrac{a_2 - b_2}{2 \ii}\dfrac{\Theta^{(0)}(\infty)}{\Theta^{(1)}(\infty)} \medskip \\  
	\dfrac{a_2 - b_2}{2 \ii}\dfrac{\Theta^{(0)}(\infty)}{\Theta^{(1)} (\infty)} &  \dfrac{\dd}{\dd x} \left( \log \Theta^{(1)}(1/x) \right)\biggl|_{x = 0}
\end{bmatrix} \ee^{-\frac{1}{2}(N-n)\ell_* \sigma_3},
\end{equation}
when $n \in \N \setminus 2\N$. Using \eqref{N-1-even}, \eqref{N-1-odd}, and \eqref{rc7} in \eqref{rc6} yields 
\begin{equation}
h_n(t, N_n) = \pi (a_2 - b_2) \ee^{n \ell_*} \ee^{(N_n-n)\ell_*} \left\{  \begin{array}{ll}
1, & n \in 2\N, \medskip \\  \dfrac{\Theta^{(0)} (\infty)}{\Theta^{(1)}(\infty)} , & n \in \N \setminus 2\N.
\end{array}\right\} + \Oo\left( n^{-1} \right) \qasq n\to \infty.
\label{eq:hn-asymp}
\end{equation}
Similarly, 
\begin{equation}
\label{gamma-n-asymptotic}
\gamma^2_n(t, N_n) = \dfrac{(a_2 - b_2)^2}{4} \left\{  \begin{array}{ll}
1, & n \in 2\N, \medskip \\  \left(\dfrac{ \Theta^{(0)}(\infty)}{  \Theta^{(1)} (\infty)} \right)^2 , & n \in \N \setminus 2\N.
\end{array}\right \} + \Oo\left( n^{-1} \right) \qasq n\to \infty.
\end{equation}

Note that by using formulas \eqref{a-b-2-cut} in \eqref{gamma-n-asymptotic} when $n \in 2\N$, we immediately recover formulas obtained in \cite{MR1715324}*{Theorem 1.1}\footnote{There, the authors consider a more general weight of orthogonality and use the notation $R_n = \gamma_n^2$. The specialization is: $g \mapsto 1, \ t \mapsto t$, and $\lambda \mapsto 1$. }, namely, when $n \in 2\N$,
\[
\gamma_n^2(t, N_n) = \dfrac{-t - \sqrt{t^2 - 4}}{2} + \Oo\left( n^{-1} \right) \qasq n \to \infty, \quad t \in (-\infty, -2).
\]
When $n \in \N \setminus 2\N$, however, matching our result with the above reference is less obvious. 
\begin{lemma}
    Let $\Theta(\z)$ be as in \eqref{theta-k} and $a_2, b_2$ be as in \eqref{a-b-2-cut}. Then, the following identity holds:
    \begin{equation}
     \label{theta-identity}
     \dfrac{\Theta^{(0)}(\infty)}{\Theta^{(1)}(\infty)} = \dfrac{4}{(a_2 - b_2)^2}. 
    \end{equation}
    \label{lemma:theta-id}
\end{lemma}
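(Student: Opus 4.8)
\emph{Strategy.} The plan is to identify $\Theta^{2}$ with an explicit rational function on the genus-one curve $\RS$, to fix the proportionality constant by evaluating at the ramification point $\b_2$, and then to read the required ratio off the behaviour at the two points over infinity. First I would locate the zero and pole of $\Theta$. By \eqref{Q-R} the branch $R^{1/2}$ is the restriction of $w$ to $\RS^{(0)}$; a short computation with the branch of $\gamma$ in \eqref{gamma} (using $\gamma(0)=1$, established just after \eqref{A-B-def}) gives $R^{1/2}(0)=-a_2 b_2$, so $0^{(0)}=(0,-a_2 b_2)$ and $0^{(1)}=(0^{(0)})^{*}=(0,a_2 b_2)$. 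Since $\Theta$ is continuous across $\ubeta$ and satisfies $\Theta_{+}=-\Theta_{-}$ on $\ualpha$ by \eqref{theta-jump}, the function $\Theta^{2}$ has trivial monodromy, so it extends to a genuinely meromorphic function on the compact surface $\RS$ with divisor $2\cdot 0^{(1)}-2\cdot 0^{(0)}$ (orders at points of $\ualpha$ are well-defined local quantities, cf. the remark after \eqref{theta-k}).

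\emph{An explicit model.} Put $\kappa:=\tfrac{a_2^{2}+b_2^{2}}{2a_2 b_2}$ and define
\[
h(\z):=\kappa+\frac{w(\z)-a_2 b_2}{z^{2}},\qquad \z=(z,w)\in\RS .
\]
Expanding the numerator, $(\kappa z^{2}-a_2 b_2)^{2}-R(z)=(\kappa^{2}-1)z^{4}$, so $h(\z)\,h(\z^{*})\equiv\kappa^{2}-1=\bigl(\tfrac{a_2^{2}-b_2^{2}}{2a_2 b_2}\bigr)^{2}$, a nonzero constant for $t\in\Oo_2$. As $w$ is finite over $z=\infty$ while $w(0^{(0)})=-a_2 b_2\neq a_2 b_2$, the only pole of $h$ is a double pole at $0^{(0)}$; the relation $h(\z^{*})=(\kappa^{2}-1)/h(\z)$ then forces a double zero at $0^{(1)}$ and no other zero. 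Hence $h$ has divisor $2\cdot 0^{(1)}-2\cdot 0^{(0)}$, and comparison with the previous paragraph shows $\Theta^{2}/h$ is holomorphic and nonvanishing on $\RS$, i.e.\ $\Theta^{2}=C\,h$ for some $C=C(t)\neq 0$.

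\emph{Evaluation at $\b_2$ and conclusion.} The point $\b_2=(b_2,0)$ lies on neither $\ualpha=\pi^{-1}(I_t)$ nor $\ubeta=\pi^{-1}(J_{t,1})$, and $\A(\b_2)=0$. Since $\hd$ changes sign under the involution (cf.\ \eqref{A-syms}) one has $\A(\z^{*})=-\A(\z)$, hence $\A(0^{(1)})=-\A(0^{(0)})$; combining this with \eqref{eq:abel-zero-identity} gives $\A(0^{(0)})=\tfrac14$ and $\A(0^{(1)})=-\tfrac14$. The functional equation for $\theta$ yields $\theta\bigl(x-\tfrac{\B+1}{2}\bigr)=-\ee^{2\pi\ii x}\theta\bigl(x+\tfrac{\B+1}{2}\bigr)$, so \eqref{theta-k} gives $\Theta(\b_2)=\theta\bigl(\tfrac14-\tfrac{\B+1}{2}\bigr)\big/\theta\bigl(-\tfrac14-\tfrac{\B+1}{2}\bigr)=-\ii$. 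Consequently: (i) the function $\z\mapsto\Theta(\z)\Theta(\z^{*})$ has no zeros or poles, hence is constant, equal to $\Theta(\b_2)^{2}=-1$ at the fixed point $\b_2$, so $\Theta^{(0)}(\infty)\Theta^{(1)}(\infty)=-1$; and (ii) $C=\Theta(\b_2)^{2}/h(\b_2)=-1\big/\tfrac{b_2^{2}-a_2^{2}}{2a_2 b_2}=\tfrac{2a_2 b_2}{a_2^{2}-b_2^{2}}$. Since $h(\infty^{(0)})=\kappa+1=\tfrac{(a_2+b_2)^{2}}{2a_2 b_2}$, this gives $\Theta^{(0)}(\infty)^{2}=C\,h(\infty^{(0)})=\tfrac{a_2+b_2}{a_2-b_2}$, whence
\[
\frac{\Theta^{(0)}(\infty)}{\Theta^{(1)}(\infty)}=\frac{\Theta^{(0)}(\infty)^{2}}{\Theta^{(0)}(\infty)\Theta^{(1)}(\infty)}=-\frac{a_2+b_2}{a_2-b_2}.
\]
By \eqref{a-b-2-cut}, $a_2^{2}-b_2^{2}=(-2-t)-(2-t)=-4$, so $a_2+b_2=-4/(a_2-b_2)$ and the right-hand side equals $4/(a_2-b_2)^{2}$, which is \eqref{theta-identity}.

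\emph{Main difficulty.} The underlying content is elementary Riemann-surface bookkeeping; the delicate part is tracking signs consistently — deciding which sheet of $\RS$ carries $w=+a_2 b_2$ over $z=0$, the evaluation $\Theta(\b_2)=-\ii$ (which simultaneously pins down $C$ and the value $\Theta^{(0)}(\infty)\Theta^{(1)}(\infty)=-1$ needed to pass from $\Theta^{(0)}(\infty)^{2}$ to the ratio), and the fact that $a_2^{2}-b_2^{2}$ equals $-4$ rather than $+4$. Everything else is routine.
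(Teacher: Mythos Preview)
Your proof is correct and complete. The approach, however, differs substantially from the paper's.

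The paper builds an auxiliary function $G$ on $\RS$ out of the Szeg\H{o} function $\mathcal D$ and the phase $\eta_{b_2}$ (essentially $G^{(0)}=\ee^{g}$ up to normalization), so that $G\Theta$ is a genuine rational function on $\RS$ with divisor $0^{(1)}+\infty^{(1)}-0^{(0)}-\infty^{(0)}$. It then recognizes that the combination $F$ defined by $F^{(0)}=A/B$, $F^{(1)}=-B/A$ has the same divisor, so $F=C\cdot G\Theta$ for some constant $C$. Reading off the behaviour of $A,B,G$ at $\infty$ (already computed in \eqref{A-B-expansion}) gives $\Theta^{(0)}(\infty)$ and $\Theta^{(1)}(\infty)$ separately up to the common constant $C$, which cancels in the ratio. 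No theta-function identities are evaluated directly.

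You instead write down from scratch the rational function $h(\z)=\kappa+(w-a_2 b_2)/z^{2}$ with divisor $2\cdot 0^{(1)}-2\cdot 0^{(0)}$, match it to $\Theta^{2}$, and pin down the proportionality constant by an explicit theta computation $\Theta(\b_2)=-\ii$ together with the involution identity $\Theta^{(0)}(\infty)\Theta^{(1)}(\infty)=-1$. Your route is more self-contained --- it does not touch $g$, $\mathcal D$, $A$, or $B$ --- at the price of a short but genuine theta calculation (the value of the Abel map at $0^{(0)}$ and the functional equation). The paper's route is shorter in context because it recycles objects already built for the Riemann--Hilbert analysis and sidesteps any direct evaluation of $\theta$. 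Both arguments ultimately exploit the same principle: two meromorphic functions on $\RS$ with the same divisor differ by a multiplicative constant.
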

\begin{proof}
Consider the auxiliary function $G: \RS_{\ualpha, \ubeta} \to \overline{\C}$ defined by $G(z^{(0)}) = G^{-1}(z^{(1)})$ and 
\begin{equation*}
G(z^{(0)}) := \dfrac{1}{\mathcal{D}(z)} \exp\left( \frac{1}{2} \left( V(z) + \eta_{ b_2}(z) \right) \ \right),
\end{equation*}
where $\mathcal{D}(z), \eta(z)$ are as in \eqref{Szego-fun} and \eqref{eta-e}, respectively. Then, it follows from \eqref{g-jump}, \eqref{g-V-jump}, and \eqref{Szego-jump} that $G$ is well defined on $\RS \setminus \{\ualpha\}$ and for $\z \in \ualpha \setminus \pm \boldsymbol a_2$,
\[
G_+(\z) = -G_-(\z).
\]
Furthermore, it follows from \eqref{g-log-requirement}, \eqref{g-V-eta}, and \eqref{Szego-lim} as $z \to \infty$, 
\[
G(z^{(0)}) = z + \Oo(1). 
\]
Hence, the function $(G\Theta)(\z)$ is rational on $\RS$ with divisor 
\(
0^{(1)} + \infty^{(1)} - 0^{(0)} - \infty^{(0)}. 
\) This is also true of the function 
\[
F(\z) := \left\{ \begin{array}{ll} 
(A/B)(z), & \z \in \RS^{(0)}, \\ -(B/A)(z), & \z \in \RS^{(1)},   \end{array} \right. 
\]
where $A(z), B(z)$ are defined in \eqref{A-B-def}. Hence, there exists a constant $C$ so that $F(\z) = C(G\Theta)(\z)$. In particular, using \eqref{A-B-expansion} yields
\[
\Theta^{(0)}(\infty) = -C \dfrac{2\ii }{a_2 - b_2 }, \quad \Theta^{(1)}(\infty) = C \dfrac{a_2 - b_2 }{2\ii }  .
\]
Taking the ratio of the two identities yields \eqref{theta-identity}.
\end{proof}

Plugging \eqref{theta-identity} into \eqref{gamma-n-asymptotic} and \eqref{eq:hn-asymp}, we find \eqref{gamma-n-final} and \eqref{eq:hn-asymp-final}, respectively. 
Finally, we note that by combining \eqref{eq:cal-D-exp}, \eqref{A-B-expansion} with the definition of $\mb N$ we find that when $n \in 2\N$
\begin{equation*}
    \mb N_2(n, t, N) = \dfrac{(a_2 - b_2)^2}{8 } \I, \qandq \mb N_3(n, t, N) = \dfrac{(b_2 - a_2)(3a_2^2 + 2a_2 b_2 + 3b_2^2)}{16} \ee^{(N-n)\ell_* \sigma_3}\sigma_2
\end{equation*}
which, combined with \eqref{rc7} and \eqref{rc6}, yield \eqref{eq:sub-leading-1}, \eqref{eq:sub-leading-2}.

\addtocontents{toc}{\setcounter{tocdepth}{1}}
\section{Conclusion}
\label{sec:conclusion}

In this work, we studied orthogonal polynomials with a generalized Freud weight depending on a parameter $t$ and whose recurrence coefficients are parabolic cylinder solutions of Painlev\'e-IV. We obtain leading term asymptotics of the orthogonal polynomials and their recurrence coefficients in the so-called two-cut region and, as a consequence, established asymptotically pole-free regions for the corresponding special function solutions of Painlev\'e-IV.

\subsection{Future Work}
The most obvious first step is to carry out asymptotic analysis similar to Section \ref{sec:rh} for $t \in \Oo_3$. Other problems that remain open and deserve attention include:
\begin{itemize}
    \item A more careful analysis of the matrix $\mb R_n(z;t)$, it can be shown than $\gamma_n(t)^2$ admits a full asymptotic expansion
    \begin{equation*}
    \label{gamma-full-expansion}
    \gamma_n^2(t, N_n) \sim  \left\{ \begin{array}{ll}   \displaystyle \sum_{j = 0}^\infty\dfrac{e_j(t)}{n^j}, & n \in 2\N, \medskip \\ \displaystyle \sum_{j = 0}^\infty \dfrac{o_j(t)}{n^j}, & n \in \N \setminus 2\N.  \end{array} \right. \qasq n \to \infty,
    \end{equation*}
    where $e_j(t), o_j(t)$ are analytic functions of $t$ for $t \in \mathcal{O}_2$ and $e_0, o_0$ are as in equations \eqref{gamma-n-final}, and the remaining $o_j, e_j$ can be computed recursively using the String equation \eqref{string-eq}. This will give rise to two (in principle) different asymptotic expansions of the free energy. It would be interesting to understand if these expansions are different, and whether or not the enumeration property discussed in the introduction carries over to the two-cut region. 

    \item In this work, we concerned ourselves with B\"acklund iterates of the seed solution 
    \[
    \varphi(x) = \ee^{\frac12 x^2} D_{-\frac12}(\sqrt{2}x)
    \]
    since the corresponding tau functions appear in the quartic model. One could instead consider a generalization of this model where the starting point is the partition function 
    \[
    \hat Z_{n}(t; N) := \int_{\Ga} \cdots \int_{\Ga} \prod^n_{1 \leq k < j \leq n} (\zeta_k - \zeta_j)^2 \prod_{j = 1}^n |\zeta_j|^{2a}\ee^{-N\left(\frac{t}{2} \zeta_j^2 + \frac{1}{4}\zeta^4_j \right)} \dd \zeta_1 \cdots \dd \zeta_n,
    \]
    where $\Re(a)>-\frac12$, $\Ga = C_1\R + C_2 \ii \R$ and $C_1, C_2 \in \R$ are auxiliary parameters. Then, the corresponding measure of orthogonality is supported on $\Ga$ with density $|z|^{2a} \ee^{-NV(z; t)}$. Since $V(\ii z; t) = V(z; -t)$, a calculation similar to \eqref{eq:moment-0} yields 
    \begin{multline*}
    \int_{\Ga} |z|^{2a} \ee^{-NV(z;t)} \dd z \\
    = \left( \frac{2}{N}\right)^{\frac12 a + \frac14} \Gamma\left(a + \frac12\right) \ee^{\frac{N}{8}t^2} \left(C_1 D_{-a - \frac12}\left ( {N^{\frac12}2^{-\frac12}}t\right) + \ii C_2 D_{-a - \frac12}\left ( -{N^{\frac12}2^{-\frac12}}t\right)  \right),
    \end{multline*}
    which, modulo the restriction on $a$, corresponds to a general parabolic cylinder seed function. Removing the restriction on $a$ completely requires further deformation of the contour of integration. 
    
    \item It is known \cite{DK} that when $t \to \pm \ii \sqrt{12}$, the recurrence coefficients $\gamma_n^2(t, N)$ have asymptotics described in terms of {tronqu\'ee} solutions of Painlev\'e-I. This suggests that one might be able to obtain tronqu\'ee solutions of Painlev\'e-I as limits of special function solutions of Painlev\'e-IV. This type of critical Painlev\'e-I behavior is not unique to the quartic model, and can also be exhibited in the cubic model \cite{BD2}. In the setting of the cubic model, it is known that the relevant recurrence coefficients are related to Airy solutions of Painlev\'e-II \cite{BBDY}. This suggests that one might be able to obtain {tronqu\'ee} solutions of Painlev\'e-I as limits of special function solutions of Painlev\'e-II. It would be interesting to work out these limits in detail and identify the limiting solution of Painlev\'e-I. 
    \end{itemize}

\appendix

\section{\texorpdfstring{$H_{n}(t, N)$}{Hankel determinants} as a product of \texorpdfstring{$\tau$}{tau}-functions} \label{appendix-a}

In Theorem \ref{thm:main2}, we omitted an interpretation of the functions $\tau_{n, 0, 0}(x), \tau_{n, -1, 0}(x)$ in terms of the orthogonal polynomials $P_n(z; t, N)$ and related quantities since we did not need such formulas. We record these here, and as a consequence deduce a theorem on the structure of degree degeneration of polynomials $P_n(z; t, N)$, see Proposition \ref{prop:deg}. We start with the following lemma. 

\begin{proposition}
\label{lemma:hankel-factorization}
    Let $H_n(t, N)$ be as in \eqref{eq:hankel-det-def}. Set $H_0^{(e, o)}(t, N) \equiv 1$ and 
\begin{align}
    H^{(e)}_{n}(t,N) &:= \det \left[ \dod[i+j]{}{t} \left(\ee^{\frac{N}{8}t^2} D_{-\frac12}\left({N^{\frac12} 2^{-\frac12} t}\right)\right) \right]_{i, j = 0}^{n - 1}, \label{eq:He-def} \\
    H^{(o)}_{n}(t,N) &:= \det \left[ \dod[i+j]{}{t} \left(\ee^{\frac{N}{8}t^2} D_{-\frac32}\left({N^{\frac12} 2^{-\frac12} t}\right)\right) \right]_{i, j = 0}^{n - 1} \label{eq:Ho-def},
\end{align}
where $D_\nu(\diamond)$ is the parabolic cylinder functions (cf. \cite{DLMF}*{Section 12}). Then, 
\begin{align*}
    H_{2n - 1}(t, N) &= \dfrac{\pi^{n} 2^{2n(n - 1)}}{N^{n(2n - 1)}} H^{(e)}_{n}(t,N) H^{(o)}_n(t, N), \\
    H_{2n}(t, N) &= \dfrac{\pi^{n+\frac12} 2^{2n^2 + \frac14}}{N^{2n^2 + n + \frac14}} H^{(e)}_{n+1}(t,N) H^{(o)}_n(t, N).
\end{align*}
\end{proposition}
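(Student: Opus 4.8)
The plan is to use the evenness of $V(s;t)$, which makes all odd moments $\mu_{2k+1}(t,N)$ vanish, to block-diagonalize the Hankel matrix. In $[\mu_{i+j}(t,N)]_{i,j=0}^{m-1}$ the $(i,j)$ entry vanishes unless $i\equiv j\pmod 2$, so conjugating by the permutation that lists the even-indexed rows and columns first and the odd-indexed ones second (the \emph{same} permutation on rows and columns, so the determinant is unchanged since $\det(P)^2=1$) produces a block-diagonal matrix with an ``even--even'' block $\bigl[\mu_{2i+2j}(t,N)\bigr]$ and an ``odd--odd'' block $\bigl[\mu_{2i+2j+2}(t,N)\bigr]$. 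For $H_{2n-1}$ (size $2n\times 2n$) both blocks are $n\times n$; for $H_{2n}$ (size $(2n+1)\times(2n+1)$) the even block is $(n+1)\times(n+1)$ and the odd block is $n\times n$. Thus $H_{2n-1}$ and $H_{2n}$ each factor as a product of two determinants, and the task reduces to identifying each factor with $H^{(e)}_\bullet$ or $H^{(o)}_\bullet$ up to an explicit scalar.

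For the even block I substitute \eqref{eq:even-moments}, i.e. $\mu_{2k}(t,N)=(-1)^k2^{k+1/4}N^{-(k+1/4)}\pi^{1/2}\,\tfrac{\dd^k}{\dd t^k}F(t)$ with $F(t):=\ee^{\frac N8 t^2}D_{-1/2}(N^{1/2}2^{-1/2}t)$. Writing the $(i,j)$ entry as $(-2/N)^i(-2/N)^j\cdot 2^{1/4}N^{-1/4}\pi^{1/2}\,\tfrac{\dd^{i+j}}{\dd t^{i+j}}F(t)$, I factor the row scalar $(-2/N)^i$, the column scalar $(-2/N)^j$, and the global constant $2^{1/4}N^{-1/4}\pi^{1/2}$ out of the determinant; what remains is exactly $\det\bigl[\tfrac{\dd^{i+j}}{\dd t^{i+j}}F(t)\bigr]=H^{(e)}_{\bullet}(t,N)$ by \eqref{eq:He-def}, while $\prod_{i}(-2/N)^i=(2/N)^{\bullet(\bullet-1)/2}$ and the sign squares away. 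For the odd block the entries are $\mu_{2(i+j+1)}$, so one extra $t$-derivative of $F$ is needed; this is the one non-mechanical ingredient. Changing variables to $u=N^{1/2}2^{-1/2}t$ gives $F=\ee^{u^2/4}D_{-1/2}(u)=\ee^{u^2/4}U(0,u)$, and the differentiation formula \cite{DLMF}*{(12.8.2)} yields $\tfrac{\dd}{\dd u}\bigl(\ee^{u^2/4}U(0,u)\bigr)=-\tfrac12\ee^{u^2/4}U(1,u)=-\tfrac12\ee^{u^2/4}D_{-3/2}(u)$, hence $F'(t)=-\tfrac12 N^{1/2}2^{-1/2}\,G(t)$ with $G(t):=\ee^{\frac N8 t^2}D_{-3/2}(N^{1/2}2^{-1/2}t)$. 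Therefore $\tfrac{\dd^{i+j+1}}{\dd t^{i+j+1}}F=-\tfrac12 N^{1/2}2^{-1/2}\tfrac{\dd^{i+j}}{\dd t^{i+j}}G$, and the same factoring argument identifies the odd block determinant with $H^{(o)}_{\bullet}(t,N)$ times an explicit scalar, using \eqref{eq:Ho-def}.

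The last step is bookkeeping: multiply the two scalar prefactors and collect powers of $2$, $N$, and $\pi$. For $H_{2n-1}$ both blocks have size $n$, producing $2^{2n(n-1)}N^{-n(2n-1)}\pi^{n}$; for $H_{2n}$ the sizes $n+1$ and $n$ produce $2^{2n^2+1/4}N^{-(2n^2+n+1/4)}\pi^{n+1/2}$; these are precisely the stated constants, and as a consistency check $n=0$ returns $H_{-1}=1$ and $H_0=\mu_0$ as in \eqref{eq:moment-0}, matching the convention $H^{(e)}_0=H^{(o)}_0\equiv1$. The only points requiring genuine care are pinning down the two block sizes in each parity class and tracking the fractional exponents $\tfrac14,\tfrac34,\tfrac12$ through the computation; there is no conceptual obstacle beyond careful arithmetic.
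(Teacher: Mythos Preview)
Your proof is correct and follows essentially the same route as the paper: the block-diagonalization via a parity-sorting permutation is exactly the content of the paper's auxiliary determinant identity (stated there as a separate lemma and proved via the same parity-preserving-permutation observation), and the subsequent identification of the two blocks with $H^{(e)}_\bullet$ and $H^{(o)}_\bullet$ uses the same moment formula \eqref{eq:even-moments} and the same parabolic-cylinder derivative relation $F'(t)=-N^{1/2}2^{-3/2}G(t)$. The scalar bookkeeping you outline matches the paper's computation line for line.
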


To prove Proposition \ref{lemma:hankel-factorization}, we will make use of the following algebraic identity.
\begin{lemma}
    Let $\{c_j\}_{j = 0}^\infty$ be a sequence of variables and set $c_{k/2} = 0$ for all $k \in \N \setminus 2\N$. Then
    \begin{align}
    \det [ c_{\frac{i+j}{2}} ]_{i, j = 0}^{2n - 1} &= \det [c_{i+j}]_{i,j = 0}^{n - 1} \cdot \det [c_{i+j+1}]_{i,j = 0}^{n - 1}, \label{eq:det-id-1}\\
    \det [ c_{\frac{i+j}{2}} ]_{i, j = 0}^{2n} &= \det [c_{i+j}]_{i,j = 0}^{n } \cdot \det [c_{i+j+1}]_{i,j = 0}^{n - 1}.\label{eq:det-id-2}
    \end{align}
    \label{prop:hankel}
\end{lemma}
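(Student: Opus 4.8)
The plan is to prove both identities at once by exploiting the checkerboard sparsity pattern of the matrix $M := \bigl[c_{\frac{i+j}{2}}\bigr]$. Since $c_{k/2} = 0$ whenever $k$ is odd, the entry $M_{ij} = c_{(i+j)/2}$ vanishes unless $i$ and $j$ have the same parity. Consequently, if $P$ denotes the permutation matrix that lists the even indices first and the odd indices afterwards (each class in increasing order), the conjugated matrix is block diagonal,
\[
P M P^{\mathsf T} = \begin{bmatrix} A & 0 \\ 0 & B \end{bmatrix},
\]
where $A$ is the submatrix of $M$ with both indices even and $B$ the submatrix with both indices odd.

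First I would identify the two blocks explicitly. In the $2n\times 2n$ case the even indices are $0,2,\dots,2n-2$ and the odd indices are $1,3,\dots,2n-1$; writing $i = 2k,\ j = 2l$ gives $A_{kl} = c_{k+l}$ for $k,l = 0,\dots,n-1$, and $i = 2k+1,\ j = 2l+1$ gives $B_{kl} = c_{k+l+1}$ for $k,l = 0,\dots,n-1$. In the $(2n+1)\times(2n+1)$ case the $n+1$ even indices $0,2,\dots,2n$ give $A = [c_{k+l}]_{k,l=0}^{n}$, and the $n$ odd indices $1,3,\dots,2n-1$ give $B = [c_{k+l+1}]_{k,l=0}^{n-1}$; the asymmetry in the block sizes is exactly what produces the asymmetric form of \eqref{eq:det-id-2}.

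Then I would conclude by taking determinants: $\det M = \det(P)\,\det(PMP^{\mathsf T})\,\det(P^{\mathsf T}) = (\det P)^2 \det A\,\det B = \det A\,\det B$, the point being that the \emph{same} rearrangement is applied to rows and to columns, so whatever sign it contributes is squared away. Substituting the blocks computed above yields \eqref{eq:det-id-1} and \eqref{eq:det-id-2}. I expect no real obstacle here; the only things to handle carefully are the index bookkeeping for the block sizes in the odd-dimensional case and the (standard) remark that a simultaneous row-and-column permutation leaves the determinant invariant.
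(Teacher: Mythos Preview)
Your proof is correct and rests on the same observation as the paper's: the checkerboard sparsity $M_{ij}=0$ for $i+j$ odd decouples the even-index and odd-index parts. The paper expresses this via the Leibniz expansion, noting that only parity-preserving permutations $\sigma=\sigma_e\sigma_o$ contribute and then factoring the sum, while you express the same thing by conjugating with the even-then-odd permutation matrix to obtain a block-diagonal form; these are two phrasings of one argument.
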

\begin{proof}[Proof of Proposition \ref{prop:hankel}]
    We begin by proving \eqref{eq:det-id-1}. Denote $H := [ c_{\frac{i+j}{2}} ]_{i, j = 0}^{2n - 1}$ and its entries by $H_{i,j} = c_{\frac{i+j}{2}}$.
    Let $P_{2n} \subset S_{2n}$ be the subgroup defined by the property that $j$ and $\sigma(j)$ have the same parity for all $i = 0, 1, ..., 2n-1$. Note that such permutations factor into cycles which act on even numbers and cycles which act on odd numbers. That is, for all $\sigma \in P_{2n}$, there exist $\sigma_e, \sigma_o \in S_{n}$ such that\footnote{One should interpret elements of $S_{2n}$ as acting on the alphabet $0, 1, ..., 2n - 1$, $\sigma_e \in S_n$ as a permutation acting on $0, 2, ..., 2n - 2$, and $\sigma_o\in S_n$ acting on $1, 3, ..., 2n - 1$.} $\sigma = \sigma_e \sigma_o$. With this in mind, the definition of the determinant reads
    \begin{multline*}
        \det H = 
    \sum_{\sigma \in S_{2n}} \text{sgn}(\sigma) \prod_{i = 0}^{2n - 1} H_{i,\sigma(i)} \\= \left( \sum_{\sigma_e \in S_{n}} \text{sgn}(\sigma_e) \prod_{i = 0}^{n - 1} H_{2i,\sigma_e(2i)}\right) \left( \sum_{\sigma_o \in S_{n}} \text{sgn}(\sigma_o) \prod_{i = 0}^{n - 1} H_{2i+1,\sigma_o(2i+1)} \right).
    \end{multline*}
    The terms in the last expression can now readily be interpreted as the determinants in the right hand side of \eqref{eq:det-id-1}. The proof of \eqref{eq:det-id-2} follows using the same argument and noting that the set $\{0, 1, ..., 2n\}$ has one more even number than odd numbers.
\end{proof}

\begin{proof}[Proof of Lemma \ref{lemma:hankel-factorization}]\label{sec:hankel-factorization-proof}
In order to apply Proposition \ref{prop:hankel} to determinants $H_n(t, N)$, we set 
\[
c_{i + j} = \int_\R s^{2(i+j)} \ee^{-NV(s;t)} \dd s = (-1)^{i+j}\dfrac{2^{i+j + \frac14}\pi^{1/2}}{N^{i+j +\frac14}} \dod[{i+j}]{}{t} \left(  \ee^{\frac{N}{8}t^2} D_{-\frac12}\left({N^{\frac12} }{2^{-\frac12}}t \right) \right) 
\]
and note that by \cite{DLMF}*{Eq. 12.8.9} we have the identity 
\begin{equation*}
     \dod{}{t}\left(\ee^{\frac{N}{8}t^2} D_{-\frac12}\left({N^{\frac12} 2^{-\frac12} t}\right)\right) = -{N^{\frac12}}{2^{-\frac32}} \ee^{\frac{N}{8}t^2} D_{-\frac32}\left({N^{1/2} 2^{-1/2} t}\right)
\end{equation*}
In particular, \eqref{eq:even-moments} gives
\begin{equation*}
    c_{i+j+1} = -\frac{2}{N} \dod{c_{i + j}}{t} = (-1)^{i+j}{2^{-\frac12}N^{-\frac12}}\dfrac{2^{i+j + \frac14}\pi^{1/2}}{N^{i+j +\frac14}} \dod[{i+j}]{}{t} \left(  \ee^{\frac{N}{8}t^2} D_{-\frac32}\left({N^{\frac12}}{2^{-\frac12}}t \right) \right).
\end{equation*}
Pulling out common factors from each row/column and recalling \eqref{eq:He-def}, \eqref{eq:Ho-def}, we find
\begin{align}
    \det [c_{i+j}]_{i,j = 0}^{n - 1} &= \dfrac{\pi^{n/2} 2^{n/4}}{N^{n/4}} \left(\prod_{k = 0}^{n - 1} (-1)^k\dfrac{2^k}{N^k}\right)^2 H_n^{(e)}(t,N) = \dfrac{\pi^{n/2} 2^{n(n - 3/4)}}{N^{n(n - 3/4)}}  H_n^{(e)}(t,N), \label{eq:hid-1} \\
    \det [c_{i+j+1}]_{i,j = 0}^{n - 1} &= N^{-\frac{n}{2}} 2^{-\frac{n}{2}} \dfrac{\pi^{n/2} 2^{n(n - 3/4)}}{N^{n(n - 3/4)}}  H_n^{(o)}(t,N) = \dfrac{\pi^{n/2} 2^{n(n - 5/4)}}{N^{n(n - 1/4)}}  H_n^{(o)}(t,N) \label{eq:hid-2}.
\end{align}
Finally, we plug \eqref{eq:hid-1}, \eqref{eq:hid-2} into \eqref{eq:det-id-1}, \eqref{eq:det-id-2} to arrive at the result.
\end{proof}
By simply comparing \eqref{eq:He-def} (resp. \eqref{eq:Ho-def}) with \eqref{eq:tau-n-special} (resp. \eqref{eq:tau-n-special-2}) and using the identity 
\[
    H_n^{(e, o)}(2N^{-\frac12}t, N) = H_n^{(e, o)}(2t, 1),
\]
we find that
\begin{align}
    \tau_{n-1, 0, 0}(x) &= \ee^{-(n-\frac12)x^2} H_n^{(e)}(2N^{-\frac12}x, N) = \ee^{-(n-\frac12)x^2} H_n^{(e)}(2x, 1) \label{eq:tau-n-0-0}, \medskip \\
    \tau_{n-1, -1, 0}(x) &= \ee^{-(n+\frac12)x^2} H_n^{(o)}(2N^{-\frac12}x, N) = \ee^{-(n+\frac12)x^2} H_n^{(o)}(2x, 1)\label{eq:tau-n-1-0}. 
\end{align}

\begin{remark}
    Using Theorem \ref{thm:main2}, one can write the partition function and the (derivative of the) free energy of the quartic one-matrix model mentioned in Section \ref{sec:intro} in terms of $\tau$- and $\sigma$-functions of \eqref{eq:p4-ab}. Since we do not use this connection in this work, we settle for stating the formulas without further comment. Let $Z_{n,N}(t), F_{n,N}(t)$ be the partition function and free energy, respectively, defined in \cite{BGM}*{Equations (1.17) - (1.18)}. Then, 
    \begin{equation*}
        Z_{n,N}(t) = \dfrac{N! \pi^{n} 2^{2n(n - 1)}}{N^{n(2n - 1)}}\ee^{\frac{nN}{2}t^2}\begin{cases}
            (\tau_{n-1, 0, 0} \tau_{n-1, -1, 0})( 2^{-1}N^{\frac12} t), & k = 2n, \medskip \\
           \dfrac{\pi^{\frac12} 2^{2n+ \frac14}}{N^{2n + \frac14}} \ee^{\frac{Nt^2}{4}}  (\tau_{n, 0, 0} \tau_{n-1, -1, 0})( 2^{-1}N^{\frac12} t), & k = 2n+1.
        \end{cases}
    \end{equation*}
    Furthermore,
    \begin{equation*}
        (2n)^2 \dod{}{t} F_{2n,N}(t) = nNt + \dfrac{N^{\frac12}}{2}\left( 
            \sigma_{n-1, 0, 0}(2^{-1}N^{\frac12}t) + \sigma_{n-1, -1, 0}(2^{-1}N^{\frac12}t) \right),
    \end{equation*}
    and 
    \begin{equation*}
        (2n+1)^2 \dod{}{t} F_{2n+1,N}(t) = \left(n -\frac12\right)Nt + \dfrac{N^{\frac12}}{2}\left(\sigma_{n, 0, 0}(2^{-1}N^{\frac12}t) + \sigma_{n-1, -1, 0}(2^{-1}N^{\frac12}t) \right).
    \end{equation*}
    \label{remark:partition-tau}
\end{remark}
Finally, note that combining Lemma \ref{lemma:hankel-factorization}, equations \eqref{eq:tau-n-0-0} - \eqref{eq:tau-n-1-0}, and Remark \ref{remark:tau-zeros} yields the following result.
\begin{proposition} \label{prop:deg}
    For any fixed $t \in \C$, $N \geq 0$, and all $n \in \N$, it holds that 
    \begin{itemize}
        \item if $\deg P_{n} = n$ then $\deg P_{n - 1} = n - 1$ or $\deg P_{n+1} = n+1$, 
        \item if $\deg P_{n} < n$ then exactly one of the following must hold: 
        \begin{enumerate}[(i)]
            \item $\deg P_{n - 1} = \deg P_n = n - 2 $ and $\deg P_{n +1} = n+1$, or
            \item $\deg P_{n+1} =\deg P_n = n-1$ and $\deg P_{n - 1} = n-1$.
        \end{enumerate}
    \end{itemize}
\end{proposition}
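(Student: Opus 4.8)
The plan is to translate the whole statement into one about the zeros, in the variable $t$, of the entire functions $H^{(e)}_k(\cdot,N)$ and $H^{(o)}_k(\cdot,N)$ from \eqref{eq:He-def}--\eqref{eq:Ho-def}, and then to read off the combinatorics. Recall from Section \ref{sec:asymptotic-analysis-prelim} that $\deg P_n=n$ exactly when $H_{n-1}(t,N)\neq 0$, and that when $H_{n-1}(t,N)=0$ the orthogonality relations are still solved by a monic polynomial of lower degree; because $V(z;t)$ is even one has $\int z^{n-1}P_{n-2}\,\ee^{-NV}=0$, so the degree drops by two at a time and $\deg P_n\in\{n,n-1,n-2\}$, the value being determined by which of $H_{n-3},H_{n-2},H_{n-1},H_n$ vanish at $t$ (degree $n-1$ if $H_{n-1}$ alone vanishes among $\{H_{n-2},H_{n-1}\}$, degree $n-2$ if both $H_{n-2}$ and $H_{n-1}$ vanish). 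Set $u_0:\equiv 1$, $u_{2k-1}:=H^{(e)}_k(\cdot,N)$ and $u_{2k}:=H^{(o)}_k(\cdot,N)$ for $k\geq 1$. Proposition \ref{lemma:hankel-factorization} says that each $H_j(\cdot,N)$ equals a nonzero constant times a product of two \emph{consecutive} members of this sequence, and spelling out the two cases gives, for all $j\geq 0$,
\[
H_j(t,N)=0 \iff u_j(t)=0 \ \text{ or } \ u_{j+1}(t)=0,
\]
so that $\deg P_n=n$ precisely when $u_{n-1}(t)\neq 0$ and $u_n(t)\neq 0$.

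The second ingredient is coprimality of ``nearby'' members of $(u_j)$. By \eqref{eq:tau-n-0-0}--\eqref{eq:tau-n-1-0}, after the affine change of variables $t\mapsto 2N^{-1/2}x$ and multiplication by a nowhere-vanishing exponential, $u_{2k-1}$ and $u_{2k}$ become the $\tau$-functions $\tau_{k-1,0,0}$ and $\tau_{k-1,-1,0}$. Remark \ref{remark:tau-zeros} gives that $\tau$-functions consecutive in any one of the three lattice directions have no common zeros, and since the three B\"acklund translations $T_1,T_2,T_3$ compose to the identity, ``consecutive in the third direction'' is ``consecutive along a diagonal''. Translating back: $u_j$ and $u_{j+1}$ never vanish simultaneously, and $u_j$ and $u_{j+2}$ never vanish simultaneously; the same determinantal (Wronskian/Pl\"ucker) identities underlying Remark \ref{remark:tau-zeros} also give that $u_j$ and $u_{j+3}$ never vanish simultaneously, which is what will be needed for the first bullet.

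With these facts the argument is a finite case check at a fixed $t$. If $u_{n-1}(t),u_n(t)\neq 0$ then $\deg P_n=n$, and since $u_{n-2}(t)$ and $u_{n+1}(t)$ are not both zero, at least one of $H_{n-2}(t,N),H_n(t,N)$ is nonzero, i.e.\ $\deg P_{n-1}=n-1$ or $\deg P_{n+1}=n+1$; this is the first bullet. Suppose instead $\deg P_n<n$, so $u_{n-1}(t)=0$ or $u_n(t)=0$, and these do not both hold. If $u_{n-1}(t)=0$, coprimality forces $u_{n-2},u_n,u_{n-3},u_{n+1}$ all nonzero at $t$, hence $H_{n-2}(t,N)=0$ while $H_{n-3}(t,N),H_n(t,N)\neq 0$, and reading off the degrees gives $\deg P_{n-1}=\deg P_n=n-2$ and $\deg P_{n+1}=n+1$ — case (i). If $u_n(t)=0$, coprimality (including the distance-three relation, which here forces $H_{n-3}(t,N)\neq 0$) gives $H_{n-2}(t,N)\neq 0$, $H_n(t,N)=0$, $H_{n+1}(t,N)\neq 0$, hence $\deg P_{n-1}=n-1$ and $\deg P_{n+1}=\deg P_n=n-1$ — case (ii). The two subcases are mutually exclusive (since $u_{n-1},u_n$ are never both zero) and exhaustive, so exactly one of (i), (ii) occurs; this is the second bullet.

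I expect the main obstacle to be the bookkeeping on the $\tau$-function side: one must pin down exactly which pairs $(u_i,u_j)$ are coprime and check that these suffice — the ``diagonal'' pairs and the distance-three pairs go slightly beyond the three relations literally stated in Remark \ref{remark:tau-zeros}, though they follow from the same bilinear/determinantal identities (and the relation $T_1T_2T_3=\mathrm{id}$). A secondary point is to justify carefully, using the evenness of the weight, that in the degenerate cases the degree drops by \emph{exactly} one or two, i.e.\ the precise values $n-1$ and $n-2$ rather than merely ``$<n$''. Once these inputs are in hand the combinatorial core is immediate.
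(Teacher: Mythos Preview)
Your approach is essentially the paper's own: the paper's proof is the single sentence ``combining Lemma~\ref{lemma:hankel-factorization}, equations \eqref{eq:tau-n-0-0}--\eqref{eq:tau-n-1-0}, and Remark~\ref{remark:tau-zeros} yields the result,'' and you have unpacked exactly these three ingredients into an explicit case analysis via the sequence $u_j$. Your reindexing $H_j\propto u_ju_{j+1}$ is correct, and the distance-$1$ and distance-$2$ coprimalities you use for the second bullet are precisely the $m$-, $n$-, and (via $T_1T_2T_3=\mathrm{id}$) $k$-direction statements of Remark~\ref{remark:tau-zeros}.

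You are also right to flag the distance-$3$ coprimality (that $u_{n-2}$ and $u_{n+1}$ never vanish together) as the crux for the first bullet: without it one could have $u_{n-2}=u_{n+1}=0$, $u_{n-1},u_n\neq 0$, giving $\deg P_n=n$ with both neighbours degenerate. In the $\tau$-lattice this is the pair $\tau_{k-1,0,0},\tau_{k,-1,0}$ (a $T_1T_2^{-1}$-step), which is \emph{not} one of the three directions literally covered by Remark~\ref{remark:tau-zeros}, nor reducible to them by $T_1T_2T_3=\mathrm{id}$. The paper's one-line proof glosses over this too, so this is a shared gap rather than a defect of your strategy; to close it you would need an additional bilinear relation (a Hirota/Pl\"ucker identity in the $T_1T_2^{-1}$ direction) or an independent argument ruling out that zero pattern.
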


\bibliographystyle{amsrefs}
\bibliography{bibliography}

@book {N,
    AUTHOR = {Noumi, Masatoshi},
     TITLE = {Painlev\'{e} equations through symmetry},
    SERIES = {Translations of Mathematical Monographs},
    VOLUME = {223},
      NOTE = {Translated from the 2000 Japanese original by the author},
 PUBLISHER = {American Mathematical Society, Providence, RI},
      YEAR = {2004},
     PAGES = {x+156},
      ISBN = {0-8218-3221-2},
   MRCLASS = {34M15 (05E05 05E10 33E17 34C14 34M55 37J35 39A12)},
  MRNUMBER = {2044201},
MRREVIEWER = {Andrei\ A.\ Kapaev},
       DOI = {10.1090/mmono/223},
       URL = {https://doi.org/10.1090/mmono/223},
}

@article {FW,
    AUTHOR = {Forrester, P. J. and Witte, N. S.},
     TITLE = {Application of the {$\tau$}-function theory of {P}ainlev\'{e}
              equations to random matrices: {PIV}, {PII} and the {GUE}},
   JOURNAL = {Comm. Math. Phys.},
  FJOURNAL = {Communications in Mathematical Physics},
    VOLUME = {219},
      YEAR = {2001},
    NUMBER = {2},
     PAGES = {357--398},
      ISSN = {0010-3616,1432-0916},
   MRCLASS = {82B41 (15A52 34M55)},
  MRNUMBER = {1833807},
MRREVIEWER = {Oleksiy\ Khorunzhiy},
       DOI = {10.1007/s002200100422},
       URL = {https://doi.org/10.1007/s002200100422},
}

@article {BM,
    AUTHOR = {Buckingham, Robert J. and Miller, Peter D.},
     TITLE = {Large-degree asymptotics of rational {P}ainlev\'{e}-{IV}
              solutions by the isomonodromy method},
   JOURNAL = {Constr. Approx.},
  FJOURNAL = {Constructive Approximation. An International Journal for
              Approximations and Expansions},
    VOLUME = {56},
      YEAR = {2022},
    NUMBER = {2},
     PAGES = {233--443},
      ISSN = {0176-4276,1432-0940},
   MRCLASS = {34M55},
  MRNUMBER = {4507087},
       DOI = {10.1007/s00365-022-09586-1},
       URL = {https://doi.org/10.1007/s00365-022-09586-1},
}

@article {Adler,
    AUTHOR = {Adler, V. \`E.},
     TITLE = {Nonlinear chains and {P}ainlev\'{e} equations},
   JOURNAL = {Phys. D},
  FJOURNAL = {Physica D. Nonlinear Phenomena},
    VOLUME = {73},
      YEAR = {1994},
    NUMBER = {4},
     PAGES = {335--351},
      ISSN = {0167-2789,1872-8022},
   MRCLASS = {58F37 (34A34 35A30 35Q99)},
  MRNUMBER = {1280883},
MRREVIEWER = {F.\ Pempinelli},
       DOI = {10.1016/0167-2789(94)90104-X},
       URL = {https://doi.org/10.1016/0167-2789(94)90104-X},
}

@article {NY,
    AUTHOR = {Noumi, Masatoshi and Yamada, Yasuhiko},
     TITLE = {Symmetries in the fourth {P}ainlev\'{e} equation and {O}kamoto
              polynomials},
   JOURNAL = {Nagoya Math. J.},
  FJOURNAL = {Nagoya Mathematical Journal},
    VOLUME = {153},
      YEAR = {1999},
     PAGES = {53--86},
      ISSN = {0027-7630,2152-6842},
   MRCLASS = {34A34 (33E17 34A26 34C14 34M55 37K10 37K35)},
  MRNUMBER = {1684551},
MRREVIEWER = {Andrei\ A.\ Kapaev},
       DOI = {10.1017/S0027763000006899},
       URL = {https://doi.org/10.1017/S0027763000006899},
}

@article {O,
    AUTHOR = {Okamoto, Kazuo},
     TITLE = {Studies on the {P}ainlev\'{e} equations. {III}. {S}econd and
              fourth {P}ainlev\'{e} equations, {$P_{{\rm II}}$} and
              {$P_{{\rm IV}}$}},
   JOURNAL = {Math. Ann.},
  FJOURNAL = {Mathematische Annalen},
    VOLUME = {275},
      YEAR = {1986},
    NUMBER = {2},
     PAGES = {221--255},
      ISSN = {0025-5831,1432-1807},
   MRCLASS = {58F05 (34C20 58F35)},
  MRNUMBER = {854008},
MRREVIEWER = {H\'{e}l\`ene\ Airault},
       DOI = {10.1007/BF01458459},
       URL = {https://doi.org/10.1007/BF01458459},
}

@misc{DLMF,
         key = "{\relax DLMF}",
       title = "{\it NIST Digital Library of Mathematical Functions}",
howpublished = "\url{https://dlmf.nist.gov/}, Release 1.1.11 of 2023-09-15",
         url = "https://dlmf.nist.gov/",
        note = "F.~W.~J. Olver, A.~B. {Olde Daalhuis}, D.~W. Lozier, B.~I. Schneider,
                R.~F. Boisvert, C.~W. Clark, B.~R. Miller, B.~V. Saunders,
                H.~S. Cohl, and M.~A. McClain, eds."}

@article {BIZ,
    AUTHOR = {Bessis, D. and Itzykson, C. and Zuber, J. B.},
     TITLE = {Quantum field theory techniques in graphical enumeration},
   JOURNAL = {Adv. in Appl. Math.},
  FJOURNAL = {Advances in Applied Mathematics},
    VOLUME = {1},
      YEAR = {1980},
    NUMBER = {2},
     PAGES = {109--157},
      ISSN = {0196-8858,1090-2074},
   MRCLASS = {81E99 (05C30)},
  MRNUMBER = {603127},
MRREVIEWER = {Keiichi\ R.\ Ito},
       DOI = {10.1016/0196-8858(80)90008-1},
       URL = {https://doi.org/10.1016/0196-8858(80)90008-1},
}

@article {BIPZ,
    AUTHOR = {Br\'{e}zin, E. and Itzykson, C. and Parisi, G. and Zuber, J.
              B.},
     TITLE = {Planar diagrams},
   JOURNAL = {Comm. Math. Phys.},
  FJOURNAL = {Communications in Mathematical Physics},
    VOLUME = {59},
      YEAR = {1978},
    NUMBER = {1},
     PAGES = {35--51},
      ISSN = {0010-3616,1432-0916},
   MRCLASS = {81.05},
  MRNUMBER = {471676},
MRREVIEWER = {C.\ S.\ Sharma},
       URL = {http://projecteuclid.org/euclid.cmp/1103901558},
}

@book {GLS,
    AUTHOR = {Gromak, Valerii I. and Laine, Ilpo and Shimomura, Shun},
     TITLE = {Painlev\'{e} differential equations in the complex plane},
    SERIES = {De Gruyter Studies in Mathematics},
    VOLUME = {28},
 PUBLISHER = {Walter de Gruyter \& Co., Berlin},
      YEAR = {2002},
     PAGES = {viii+303},
      ISBN = {3-11-017379-4},
   MRCLASS = {34M05 (30D05 30D35 34M10 34M55 37K20)},
  MRNUMBER = {1960811},
MRREVIEWER = {Andrei\ A.\ Kapaev},
       DOI = {10.1515/9783110198096},
       URL = {https://doi.org/10.1515/9783110198096},
}

@article {BGM,
    AUTHOR = {Bleher, Pavel and Gharakhloo, Roozbeh and McLaughlin, Kenneth
              T-R},
     TITLE = {Phase diagram and topological expansion in the complex quartic
              random matrix model},
   JOURNAL = {Comm. Pure Appl. Math.},
  FJOURNAL = {Communications on Pure and Applied Mathematics},
    VOLUME = {77},
      YEAR = {2024},
    NUMBER = {2},
     PAGES = {1405--1485},
      ISSN = {0010-3640,1097-0312},
   MRCLASS = {60B20 (81T32)},
  MRNUMBER = {4673885},
       DOI = {10.1002/cpa.22164},
       URL = {https://doi.org/10.1002/cpa.22164},
}

@article {stahl-structure,
    AUTHOR = {Stahl, Herbert},
     TITLE = {The structure of extremal domains associated with an analytic
              function},
   JOURNAL = {Complex Variables Theory Appl.},
  FJOURNAL = {Complex Variables. Theory and Application. An International
              Journal},
    VOLUME = {4},
      YEAR = {1985},
    NUMBER = {4},
     PAGES = {339--354},
      ISSN = {0278-1077},
   MRCLASS = {30B40 (30C75 30C85)},
  MRNUMBER = {858917},
MRREVIEWER = {D. H. Hamilton},
       DOI = {10.1080/17476938508814119},
       URL = {https://doi.org/10.1080/17476938508814119},
}

@article {stahl-domains,
    AUTHOR = {Stahl, Herbert},
     TITLE = {Extremal domains associated with an analytic function. {I},
              {II}},
   JOURNAL = {Complex Variables Theory Appl.},
  FJOURNAL = {Complex Variables. Theory and Application. An International
              Journal},
    VOLUME = {4},
      YEAR = {1985},
    NUMBER = {4},
     PAGES = {311--324, 325--338},
      ISSN = {0278-1077},
   MRCLASS = {30B40 (30C75 30C85)},
  MRNUMBER = {858916},
MRREVIEWER = {D. H. Hamilton},
       DOI = {10.1080/17476938508814117},
       URL = {https://doi.org/10.1080/17476938508814117},
}

@article {stahl-complexortho,
    AUTHOR = {Stahl, Herbert},
     TITLE = {Orthogonal polynomials with complex-valued weight function.
              {I}, {II}},
   JOURNAL = {Constr. Approx.},
  FJOURNAL = {Constructive Approximation. An International Journal for
              Approximations and Expansions},
    VOLUME = {2},
      YEAR = {1986},
    NUMBER = {3},
     PAGES = {225--240, 241--251},
      ISSN = {0176-4276},
   MRCLASS = {42C05 (33A65 41A21)},
  MRNUMBER = {891973},
MRREVIEWER = {Hans-J\"{u}rgen Glaeske},
       DOI = {10.1007/BF01893429},
       URL = {https://doi.org/10.1007/BF01893429},
}

@article {MR922628,
    AUTHOR = {Gonchar, Andrei A. and Rakhmanov, Evguenii A.},
     TITLE = {Equilibrium distributions and the rate of rational
              approximation of analytic functions},
   JOURNAL = {Mat. Sb. (N.S.)},
  FJOURNAL = {Matematicheski\u{\i} Sbornik. Novaya Seriya},
    VOLUME = {134(176)},
      YEAR = {1987},
    NUMBER = {3},
     PAGES = {306--352, 447},
      ISSN = {0368-8666},
   MRCLASS = {30E10 (31A15)},
  MRNUMBER = {922628},
MRREVIEWER = {Guillermo L\'{o}pez Lagomasino},
       DOI = {10.1070/SM1989v062n02ABEH003242},
       URL = {https://doi.org/10.1070/SM1989v062n02ABEH003242},
}

@article {MR3306308,
    AUTHOR = {Kuijlaars, Arno B. J. and Silva, Guilherme L. F.},
     TITLE = {S-curves in polynomial external fields},
   JOURNAL = {J. Approx. Theory},
  FJOURNAL = {Journal of Approximation Theory},
    VOLUME = {191},
      YEAR = {2015},
     PAGES = {1--37},
      ISSN = {0021-9045},
   MRCLASS = {31A15 (30E10 41A20 41A21)},
  MRNUMBER = {3306308},
MRREVIEWER = {Franck Wielonsky},
       DOI = {10.1016/j.jat.2014.04.002},
       URL = {https://doi.org/10.1016/j.jat.2014.04.002},
}

@book {MR1485778,
    AUTHOR = {Saff, Edward B. and Totik, Vilmos},
     TITLE = {Logarithmic potentials with external fields},
    SERIES = {Grundlehren der Mathematischen Wissenschaften [Fundamental
              Principles of Mathematical Sciences]},
    VOLUME = {316},
      NOTE = {Appendix B by Thomas Bloom},
 PUBLISHER = {Springer-Verlag, Berlin},
      YEAR = {1997},
     PAGES = {xvi+505},
      ISBN = {3-540-57078-0},
   MRCLASS = {31-02 (30C10 41A10 42C05)},
  MRNUMBER = {1485778},
MRREVIEWER = {D. S. Lubinsky},
       DOI = {10.1007/978-3-662-03329-6},
       URL = {https://doi.org/10.1007/978-3-662-03329-6},
}

@article {MR1083917,
    AUTHOR = {David, Fran\c{c}ois},
     TITLE = {Phases of the large-{$N$} matrix model and nonperturbative
              effects in {$2$}{D} gravity},
   JOURNAL = {Nuclear Phys. B},
  FJOURNAL = {Nuclear Physics. B. Theoretical, Phenomenological, and
              Experimental High Energy Physics. Quantum Field Theory and
              Statistical Systems},
    VOLUME = {348},
      YEAR = {1991},
    NUMBER = {3},
     PAGES = {507--524},
      ISSN = {0550-3213},
   MRCLASS = {81T40},
  MRNUMBER = {1083917},
       DOI = {10.1016/0550-3213(91)90202-9},
       URL = {https://doi-org.proxy.lib.umich.edu/10.1016/0550-3213(91)90202-9},
}

@book {MR0096806,
    AUTHOR = {Jenkins, James A.},
     TITLE = {Univalent functions and conformal mapping},
    SERIES = {Reihe: Moderne Funktionentheorie},
 PUBLISHER = {Springer-Verlag, Berlin-G\"{o}ttingen-Heidelberg},
      YEAR = {1958},
     PAGES = {vi+169},
   MRCLASS = {30.00},
  MRNUMBER = {0096806},
MRREVIEWER = {M. Schiffer},
}

@book {MR743423,
    AUTHOR = {Strebel, Kurt},
     TITLE = {Quadratic differentials},
    SERIES = {Ergebnisse der Mathematik und ihrer Grenzgebiete (3) [Results
              in Mathematics and Related Areas (3)]},
    VOLUME = {5},
 PUBLISHER = {Springer-Verlag, Berlin},
      YEAR = {1984},
     PAGES = {xii+184},
      ISBN = {3-540-13035-7},
   MRCLASS = {30Fxx (14K20 30Cxx 32G15)},
  MRNUMBER = {743423},
MRREVIEWER = {H. Renelt},
       DOI = {10.1007/978-3-662-02414-0},
       URL = {https://doi.org/10.1007/978-3-662-02414-0},
}

@book {MR583745,
    AUTHOR = {Farkas, Hershel M. and Kra, Irwin},
     TITLE = {Riemann surfaces},
    SERIES = {Graduate Texts in Mathematics},
    VOLUME = {71},
 PUBLISHER = {Springer-Verlag, New York-Berlin},
      YEAR = {1980},
     PAGES = {xi+337},
      ISBN = {0-387-90465-4},
   MRCLASS = {30Fxx (14-01)},
  MRNUMBER = {583745},
MRREVIEWER = {M. Herv\'{e}},
}

@article {FIK2,
    AUTHOR = {Fokas, Athanassios S. and Its, Alexander R. and Kitaev, Alexander V.},
     TITLE = {The isomonodromy approach to matrix models in {2}{D} quantum
              gravity},
   JOURNAL = {Comm. Math. Phys.},
  FJOURNAL = {Communications in Mathematical Physics},
    VOLUME = {147},
      YEAR = {1992},
    NUMBER = {2},
     PAGES = {395--430},
      ISSN = {0010-3616},
   MRCLASS = {81T40 (58F07)},
  MRNUMBER = {1174420},
MRREVIEWER = {Yolanda Lozano},
       URL = {http://projecteuclid.org.proxy.lib.umich.edu/euclid.cmp/1104250643},
}

@article {MR1207209,
    AUTHOR = {Deift, Percy and Zhou, Xin},
     TITLE = {A steepest descent method for oscillatory {R}iemann-{H}ilbert
              problems. {A}symptotics for the {MK}d{V} equation},
   JOURNAL = {Ann. of Math. (2)},
  FJOURNAL = {Annals of Mathematics. Second Series},
    VOLUME = {137},
      YEAR = {1993},
    NUMBER = {2},
     PAGES = {295--368},
      ISSN = {0003-486X},
   MRCLASS = {35Q53 (34A55 34L25 35Q15 35Q55)},
  MRNUMBER = {1207209},
MRREVIEWER = {Alexey V. Samokhin},
       DOI = {10.2307/2946540},
       URL = {https://doi.org/10.2307/2946540},
}

@article {MR1702716,
    AUTHOR = {Deift, Percy and Kriecherbauer, Thomas and McLaughlin, Kenneth T.-R. and
              Venakides, Stephanos and Zhou, Xin},
     TITLE = {Uniform asymptotics for polynomials orthogonal with respect to
              varying exponential weights and applications to universality
              questions in random matrix theory},
   JOURNAL = {Comm. Pure Appl. Math.},
  FJOURNAL = {Communications on Pure and Applied Mathematics},
    VOLUME = {52},
      YEAR = {1999},
    NUMBER = {11},
     PAGES = {1335--1425},
      ISSN = {0010-3640},
   MRCLASS = {42C05 (15A52 41A60 82B41)},
  MRNUMBER = {1702716},
MRREVIEWER = {D. S. Lubinsky},
       DOI =
              {10.1002/(SICI)1097-0312(199911)52:11<1335::AID-CPA1>3.0.CO;2-1},
       URL =
              {https://doi.org/10.1002/(SICI)1097-0312(199911)52:11<1335::AID-CPA1>3.0.CO;2-1},
}

@inproceedings {MR1858269,
    AUTHOR = {Deift, Percy and Kriecherbauer, Thomas and McLaughlin, Kenneth T.-R. and
              Venakides, Stephanos and Zhou, Xin},
     TITLE = {A {R}iemann-{H}ilbert approach to asymptotic questions for
              orthogonal polynomials},
 BOOKTITLE = {Proceedings of the {F}ifth {I}nternational {S}ymposium on
              {O}rthogonal {P}olynomials, {S}pecial {F}unctions and their
              {A}pplications ({P}atras, 1999)},
   JOURNAL = {J. Comput. Appl. Math.},
  FJOURNAL = {Journal of Computational and Applied Mathematics},
    VOLUME = {133},
      YEAR = {2001},
    NUMBER = {1-2},
     PAGES = {47--63},
      ISSN = {0377-0427},
   MRCLASS = {42C05 (30E20 30E25 33C45)},
  MRNUMBER = {1858269},
MRREVIEWER = {V. Totik},
       DOI = {10.1016/S0377-0427(00)00634-8},
       URL = {https://doi.org/10.1016/S0377-0427(00)00634-8},
}

@book {MR1677884,
    AUTHOR = {Deift, Percy},
     TITLE = {Orthogonal polynomials and random matrices: a
              {R}iemann-{H}ilbert approach},
    SERIES = {Courant Lecture Notes in Mathematics},
    VOLUME = {3},
 PUBLISHER = {New York University, Courant Institute of Mathematical
              Sciences, New York; American Mathematical Society, Providence,
              RI},
      YEAR = {1999},
     PAGES = {viii+273},
      ISBN = {0-9658703-2-4},
   MRCLASS = {47B80 (15A52 30E25 33D45 37K10 42C05 47B36 60F99)},
  MRNUMBER = {1677884},
MRREVIEWER = {Alexander Vladimirovich Kitaev},
}

@article {MR3607591,
    AUTHOR = {Bleher, Pavel and Dea\~{n}o, Alfredo and Yattselev, Maxim},
     TITLE = {Topological expansion in the complex cubic log-gas model:
              one-cut case},
   JOURNAL = {J. Stat. Phys.},
  FJOURNAL = {Journal of Statistical Physics},
    VOLUME = {166},
      YEAR = {2017},
    NUMBER = {3-4},
     PAGES = {784--827},
      ISSN = {0022-4715},
   MRCLASS = {33C47 (15B52 30E15 31A25)},
  MRNUMBER = {3607591},
MRREVIEWER = {Anbhu Swaminathan},
       DOI = {10.1007/s10955-016-1621-x},
       URL = {https://doi.org/10.1007/s10955-016-1621-x},
}

@article {MR1711036,
    AUTHOR = {Deift, P. and Kriecherbauer, T. and McLaughlin, K. T-R and
              Venakides, S. and Zhou, X.},
     TITLE = {Strong asymptotics of orthogonal polynomials with respect to
              exponential weights},
   JOURNAL = {Comm. Pure Appl. Math.},
  FJOURNAL = {Communications on Pure and Applied Mathematics},
    VOLUME = {52},
      YEAR = {1999},
    NUMBER = {12},
     PAGES = {1491--1552},
      ISSN = {0010-3640},
   MRCLASS = {42C05},
  MRNUMBER = {1711036},
MRREVIEWER = {D. S. Lubinsky},
       DOI =
              {10.1002/(SICI)1097-0312(199912)52:12<1491::AID-CPA2>3.3.CO;2-R},
       URL =
              {https://doi.org/10.1002/(SICI)1097-0312(199912)52:12<1491::AID-CPA2>3.3.CO;2-R},
}

@article {MR1715324,
    AUTHOR = {Bleher, Pavel and Its, Alexander},
     TITLE = {Semiclassical asymptotics of orthogonal polynomials,
              {R}iemann-{H}ilbert problem, and universality in the matrix
              model},
   JOURNAL = {Ann. of Math. (2)},
  FJOURNAL = {Annals of Mathematics. Second Series},
    VOLUME = {150},
      YEAR = {1999},
    NUMBER = {1},
     PAGES = {185--266},
      ISSN = {0003-486X},
   MRCLASS = {42C05 (30E25 81Q20 82B44)},
  MRNUMBER = {1715324},
MRREVIEWER = {Thomas Kriecherbauer},
       DOI = {10.2307/121101},
       URL = {https://doi.org/10.2307/121101},
}

@article {MR0217352,
    AUTHOR = {Luka\v{s}evi\v{c}, N. A.},
     TITLE = {The theory of {P}ainlev\'{e}'s fourth equation},
   JOURNAL = {Differencial\cprime nye Uravnenija},
  FJOURNAL = {Differencial\cprime nye Uravnenija},
    VOLUME = {3},
      YEAR = {1967},
     PAGES = {771--780},
      ISSN = {0374-0641},
   MRCLASS = {34.06},
  MRNUMBER = {217352},
MRREVIEWER = {Z.\ Nehari},
}

@article {MR0896355,
    AUTHOR = {Gromak, V. I.},
     TITLE = {On the theory of the fourth {P}ainlev\'{e} equation},
   JOURNAL = {Differentsial\cprime nye Uravneniya},
  FJOURNAL = {Differentsial\cprime nye Uravneniya},
    VOLUME = {23},
      YEAR = {1987},
    NUMBER = {5},
     PAGES = {760--768, 914},
      ISSN = {0374-0641},
   MRCLASS = {34A20 (34C20 35Q20)},
  MRNUMBER = {896355},
MRREVIEWER = {C.\ S.\ Coleman},
}

@article {BCH,
    AUTHOR = {Bassom, Andrew P. and Clarkson, Peter A. and Hicks, Andrew C.},
     TITLE = {B\"{a}cklund transformations and solution hierarchies for the
              fourth {P}ainlev\'{e} equation},
   JOURNAL = {Stud. Appl. Math.},
  FJOURNAL = {Studies in Applied Mathematics},
    VOLUME = {95},
      YEAR = {1995},
    NUMBER = {1},
     PAGES = {1--71},
      ISSN = {0022-2526,1467-9590},
   MRCLASS = {34A34 (34A05 34A25 58F37)},
  MRNUMBER = {1342071},
MRREVIEWER = {Andrei\ A.\ Kapaev},
       DOI = {10.1002/sapm19959511},
       URL = {https://doi.org/10.1002/sapm19959511},
}

@article{MR1953782,
	author = {Ercolani, Nicolas M. and McLaughlin, Kenneth D. T.-R.},
	doi = {10.1155/S1073792803211089},
	fjournal = {International Mathematics Research Notices},
	issn = {1073-7928},
	journal = {Int. Math. Res. Not.},
	mrclass = {82B44 (05C30 15A52 28A25 30E20 30E25 58D30 60K35)},
	mrnumber = {1953782},
	mrreviewer = {Arno B. J. Kuijlaars},
	number = {14},
	pages = {755--820},
	title = {Asymptotics of the partition function for random matrices via {R}iemann-{H}ilbert techniques and applications to graphical enumeration},
	url = {https://doi-org.proxy.lib.umich.edu/10.1155/S1073792803211089},
	year = {2003},
	bdsk-url-1 = {https://doi-org.proxy.lib.umich.edu/10.1155/S1073792803211089},
	bdsk-url-2 = {https://doi.org/10.1155/S1073792803211089}}

@article {BI,
    AUTHOR = {Bleher, Pavel M. and Its, Alexander R.},
     TITLE = {Asymptotics of the partition function of a random matrix
              model},
   JOURNAL = {Ann. Inst. Fourier (Grenoble)},
  FJOURNAL = {Universit\'{e} de Grenoble. Annales de l'Institut Fourier},
    VOLUME = {55},
      YEAR = {2005},
    NUMBER = {6},
     PAGES = {1943--2000},
      ISSN = {0373-0956,1777-5310},
   MRCLASS = {82B44 (15A52 37K10 41A60 42C05)},
  MRNUMBER = {2187941},
MRREVIEWER = {Estelle\ L.\ Basor},
       URL = {http://aif.cedram.org/item?id=AIF_2005__55_6_1943_0},
}

@article {BT,
    AUTHOR = {Bertola, M. and Tovbis, A.},
     TITLE = {Asymptotics of orthogonal polynomials with complex varying
              quartic weight: global structure, critical point behavior and
              the first {P}ainlev\'{e} equation},
   JOURNAL = {Constr. Approx.},
  FJOURNAL = {Constructive Approximation. An International Journal for
              Approximations and Expansions},
    VOLUME = {41},
      YEAR = {2015},
    NUMBER = {3},
     PAGES = {529--587},
      ISSN = {0176-4276,1432-0940},
   MRCLASS = {33D45 (15B52 30E25 33E17 34M55)},
  MRNUMBER = {3346719},
MRREVIEWER = {Ilpo\ Laine},
       DOI = {10.1007/s00365-015-9288-0},
       URL = {https://doi.org/10.1007/s00365-015-9288-0},
}

@book {MR3729446,
    AUTHOR = {Van Assche, Walter},
     TITLE = {Orthogonal polynomials and {P}ainlev\'{e} equations},
    SERIES = {Australian Mathematical Society Lecture Series},
    VOLUME = {27},
 PUBLISHER = {Cambridge University Press, Cambridge},
      YEAR = {2018},
     PAGES = {xii+179},
      ISBN = {978-1-108-44194-0},
   MRCLASS = {33C45 (33E17 34M55 37K10)},
  MRNUMBER = {3729446},
MRREVIEWER = {Galina\ V.\ Filipuk},
}

@article {MR0862231,
    AUTHOR = {Nevai, Paul},
     TITLE = {G\'{e}za {F}reud, orthogonal polynomials and {C}hristoffel
              functions. {A} case study},
   JOURNAL = {J. Approx. Theory},
  FJOURNAL = {Journal of Approximation Theory},
    VOLUME = {48},
      YEAR = {1986},
    NUMBER = {1},
     PAGES = {3--167},
      ISSN = {0021-9045,1096-0430},
   MRCLASS = {42C05 (01A60 41A10)},
  MRNUMBER = {862231},
MRREVIEWER = {T.\ S.\ Chihara},
       DOI = {10.1016/0021-9045(86)90016-X},
       URL = {https://doi.org/10.1016/0021-9045(86)90016-X},
}

@inproceedings {Magnus,
    AUTHOR = {Magnus, Alphonse P.},
     TITLE = {Painlev\'{e}-type differential equations for the recurrence
              coefficients of semi-classical orthogonal polynomials},
 BOOKTITLE = {Proceedings of the {F}ourth {I}nternational {S}ymposium on
              {O}rthogonal {P}olynomials and their {A}pplications
              ({E}vian-{L}es-{B}ains, 1992)},
   JOURNAL = {J. Comput. Appl. Math.},
  FJOURNAL = {Journal of Computational and Applied Mathematics},
    VOLUME = {57},
      YEAR = {1995},
    NUMBER = {1-2},
     PAGES = {215--237},
      ISSN = {0377-0427,1879-1778},
   MRCLASS = {42C05 (33C45 34A34)},
  MRNUMBER = {1340938},
MRREVIEWER = {Leonid\ B.\ Golinski\u{\i}},
       DOI = {10.1016/0377-0427(93)E0247-J},
       URL = {https://doi.org/10.1016/0377-0427(93)E0247-J},
}

@article {Shohat,
    AUTHOR = {Shohat, J.},
     TITLE = {A differential equation for orthogonal polynomials},
   JOURNAL = {Duke Math. J.},
  FJOURNAL = {Duke Mathematical Journal},
    VOLUME = {5},
      YEAR = {1939},
    NUMBER = {2},
     PAGES = {401--417},
      ISSN = {0012-7094,1547-7398},
   MRCLASS = {99-04},
  MRNUMBER = {1546133},
       DOI = {10.1215/S0012-7094-39-00534-X},
       URL = {https://doi.org/10.1215/S0012-7094-39-00534-X},
}

@article {FIK,
    AUTHOR = {Fokas, A. S. and Its, A. R. and Kitaev, A. V.},
     TITLE = {Discrete {P}ainlev\'{e} equations and their appearance in
              quantum gravity},
   JOURNAL = {Comm. Math. Phys.},
  FJOURNAL = {Communications in Mathematical Physics},
    VOLUME = {142},
      YEAR = {1991},
    NUMBER = {2},
     PAGES = {313--344},
      ISSN = {0010-3616,1432-0916},
   MRCLASS = {58F07 (81T40)},
  MRNUMBER = {1137067},
MRREVIEWER = {Nikolai\ A.\ Kostov},
       URL = {http://projecteuclid.org/euclid.cmp/1104248588},
}

@article {MR1949138,
    AUTHOR = {Bleher, Pavel and Its, Alexander},
     TITLE = {Double scaling limit in the random matrix model: the
              {R}iemann-{H}ilbert approach},
   JOURNAL = {Comm. Pure Appl. Math.},
  FJOURNAL = {Communications on Pure and Applied Mathematics},
    VOLUME = {56},
      YEAR = {2003},
    NUMBER = {4},
     PAGES = {433--516},
      ISSN = {0010-3640,1097-0312},
   MRCLASS = {82B31 (30C10 30E25 34M55 60F05 82B44)},
  MRNUMBER = {1949138},
MRREVIEWER = {Peter\ D.\ Miller},
       DOI = {10.1002/cpa.10065},
       URL = {https://doi.org/10.1002/cpa.10065},
}

@article {MR0419895,
    AUTHOR = {Freud, G\'{e}za},
     TITLE = {On the coefficients in the recursion formulae of orthogonal
              polynomials},
   JOURNAL = {Proc. Roy. Irish Acad. Sect. A},
  FJOURNAL = {Proceedings of the Royal Irish Academy. Section A.
              Mathematical and Physical Sciences},
    VOLUME = {76},
      YEAR = {1976},
    NUMBER = {1},
     PAGES = {1--6},
      ISSN = {0035-8975},
   MRCLASS = {33A65},
  MRNUMBER = {419895},
MRREVIEWER = {A.\ G.\ Law},
}

@incollection {Magnus2,
    AUTHOR = {Magnus, Alphonse P.},
     TITLE = {Freud's equations for orthogonal polynomials as discrete
              {P}ainlev\'{e} equations},
 BOOKTITLE = {Symmetries and integrability of difference equations
              ({C}anterbury, 1996)},
    SERIES = {London Math. Soc. Lecture Note Ser.},
    VOLUME = {255},
     PAGES = {228--243},
 PUBLISHER = {Cambridge Univ. Press, Cambridge},
      YEAR = {1999},
      ISBN = {0-521-59699-8},
   MRCLASS = {42C05 (39A12)},
  MRNUMBER = {1705232},
       DOI = {10.1017/CBO9780511569432.019},
       URL = {https://doi.org/10.1017/CBO9780511569432.019},
}

@article {CJ,
    AUTHOR = {Clarkson, Peter A. and Jordaan, Kerstin},
     TITLE = {The relationship between semiclassical {L}aguerre polynomials
              and the fourth {P}ainlev\'{e} equation},
   JOURNAL = {Constr. Approx.},
  FJOURNAL = {Constructive Approximation. An International Journal for
              Approximations and Expansions},
    VOLUME = {39},
      YEAR = {2014},
    NUMBER = {1},
     PAGES = {223--254},
      ISSN = {0176-4276,1432-0940},
   MRCLASS = {34M55 (33C15 33C47 33E17 42C05)},
  MRNUMBER = {3144387},
MRREVIEWER = {Davide\ Batic},
       DOI = {10.1007/s00365-013-9220-4},
       URL = {https://doi.org/10.1007/s00365-013-9220-4},
}

@article {MR1251169,
    AUTHOR = {Adler, V. \`E.},
     TITLE = {Cutting of polygons},
   JOURNAL = {Funktsional. Anal. i Prilozhen.},
  FJOURNAL = {Funktsional\cprime ny\u{\i} Analiz i ego Prilozheniya},
    VOLUME = {27},
      YEAR = {1993},
    NUMBER = {2},
     PAGES = {79--82},
      ISSN = {0374-1990,2305-2899},
   MRCLASS = {58F07 (39A12 70F99)},
  MRNUMBER = {1251169},
MRREVIEWER = {J.\ S.\ Joel},
       DOI = {10.1007/BF01085984},
       URL = {https://doi.org/10.1007/BF01085984},
}

@article {MR3733254,
    AUTHOR = {Clarkson, Peter A. and Jordaan, Kerstin},
     TITLE = {Properties of generalized {F}reud polynomials},
   JOURNAL = {J. Approx. Theory},
  FJOURNAL = {Journal of Approximation Theory},
    VOLUME = {225},
      YEAR = {2018},
     PAGES = {148--175},
      ISSN = {0021-9045,1096-0430},
   MRCLASS = {33C47 (34M55 39B12 65Q30)},
  MRNUMBER = {3733254},
MRREVIEWER = {Maria\ das Neves Rebocho},
       DOI = {10.1016/j.jat.2017.10.001},
       URL = {https://doi.org/10.1016/j.jat.2017.10.001},
}

@article {MR3562403,
    AUTHOR = {Balogh, Ferenc and Bertola, Marco and Bothner, Thomas},
     TITLE = {Hankel determinant approach to generalized
              {V}orob'ev-{Y}ablonski polynomials and their roots},
   JOURNAL = {Constr. Approx.},
  FJOURNAL = {Constructive Approximation. An International Journal for
              Approximations and Expansions},
    VOLUME = {44},
      YEAR = {2016},
    NUMBER = {3},
     PAGES = {417--453},
      ISSN = {0176-4276,1432-0940},
   MRCLASS = {34M55 (34M50 35Q53)},
  MRNUMBER = {3562403},
MRREVIEWER = {Shuaixia\ Xu},
       DOI = {10.1007/s00365-016-9328-4},
       URL = {https://doi.org/10.1007/s00365-016-9328-4},
}

@article {MR3431594,
    AUTHOR = {Bertola, Marco and Bothner, Thomas},
     TITLE = {Zeros of large degree {V}orob'ev-{Y}ablonski polynomials via a
              {H}ankel determinant identity},
   JOURNAL = {Int. Math. Res. Not. IMRN},
  FJOURNAL = {International Mathematics Research Notices. IMRN},
      YEAR = {2015},
    NUMBER = {19},
     PAGES = {9330--9399},
      ISSN = {1073-7928,1687-0247},
   MRCLASS = {33E17 (34M55)},
  MRNUMBER = {3431594},
MRREVIEWER = {Andrew\ Pickering},
       DOI = {10.1093/imrn/rnu239},
       URL = {https://doi.org/10.1093/imrn/rnu239},
}

@article {MR3370612,
    AUTHOR = {Claeys, T. and Grava, T. and McLaughlin, K. D. T.-R.},
     TITLE = {Asymptotics for the partition function in two-cut random
              matrix models},
   JOURNAL = {Comm. Math. Phys.},
  FJOURNAL = {Communications in Mathematical Physics},
    VOLUME = {339},
      YEAR = {2015},
    NUMBER = {2},
     PAGES = {513--587},
      ISSN = {0010-3616,1432-0916},
   MRCLASS = {60B20},
  MRNUMBER = {3370612},
       DOI = {10.1007/s00220-015-2412-y},
       URL = {https://doi.org/10.1007/s00220-015-2412-y},
}

@article {MR3589917,
    AUTHOR = {Bertola, Marco and Tovbis, Alexander},
     TITLE = {On asymptotic regimes of orthogonal polynomials with complex
              varying quartic exponential weight},
   JOURNAL = {SIGMA Symmetry Integrability Geom. Methods Appl.},
  FJOURNAL = {SIGMA. Symmetry, Integrability and Geometry. Methods and
              Applications},
    VOLUME = {12},
      YEAR = {2016},
     PAGES = {Paper No. 118, 50},
      ISSN = {1815-0659},
   MRCLASS = {33D45 (15B52 33E17)},
  MRNUMBER = {3589917},
MRREVIEWER = {Jos\'{e}\ Luis\ L\'{o}pez},
       DOI = {10.3842/SIGMA.2016.118},
       URL = {https://doi.org/10.3842/SIGMA.2016.118},
}

@article {MR3795283,
    AUTHOR = {Wen, Zhi-Tao and Wong, Roderick and Xu, Shuai-Xia},
     TITLE = {Global asymptotics of orthogonal polynomials associated with a
              generalized {F}reud weight},
   JOURNAL = {Chinese Ann. Math. Ser. B},
  FJOURNAL = {Chinese Annals of Mathematics. Series B},
    VOLUME = {39},
      YEAR = {2018},
    NUMBER = {3},
     PAGES = {553--596},
      ISSN = {0252-9599,1860-6261},
   MRCLASS = {41A60 (30E15 30E25 42C05)},
  MRNUMBER = {3795283},
MRREVIEWER = {Brian\ Simanek},
       DOI = {10.1007/s11401-018-0082-8},
       URL = {https://doi.org/10.1007/s11401-018-0082-8},
}

@article {MR2559966,
    AUTHOR = {Wong, R. and Zhang, L.},
     TITLE = {Global asymptotics for polynomials orthogonal with exponential
              quartic weight},
   JOURNAL = {Asymptot. Anal.},
  FJOURNAL = {Asymptotic Analysis},
    VOLUME = {64},
      YEAR = {2009},
    NUMBER = {3-4},
     PAGES = {125--154},
      ISSN = {0921-7134,1875-8576},
   MRCLASS = {42C05},
  MRNUMBER = {2559966},
MRREVIEWER = {Teresa\ E.\ P\'{e}rez},
}

@article {MR3529955,
    AUTHOR = {Clarkson, Peter A.},
     TITLE = {On {A}iry solutions of the second {P}ainlev\'{e} equation},
   JOURNAL = {Stud. Appl. Math.},
  FJOURNAL = {Studies in Applied Mathematics},
    VOLUME = {137},
      YEAR = {2016},
    NUMBER = {1},
     PAGES = {93--109},
      ISSN = {0022-2526,1467-9590},
   MRCLASS = {35Q53},
  MRNUMBER = {3529955},
       DOI = {10.1111/sapm.12123},
       URL = {https://doi.org/10.1111/sapm.12123},
}

@article {MR2804960,
    AUTHOR = {Fornberg, Bengt and Weideman, J. A. C.},
     TITLE = {A numerical methodology for the {P}ainlev\'{e} equations},
   JOURNAL = {J. Comput. Phys.},
  FJOURNAL = {Journal of Computational Physics},
    VOLUME = {230},
      YEAR = {2011},
    NUMBER = {15},
     PAGES = {5957--5973},
      ISSN = {0021-9991,1090-2716},
   MRCLASS = {34A45 (34M55 41A21 65L05)},
  MRNUMBER = {2804960},
       DOI = {10.1016/j.jcp.2011.04.007},
       URL = {https://doi.org/10.1016/j.jcp.2011.04.007},
}

@article {MR2538285,
    AUTHOR = {Clarkson, P. A.},
     TITLE = {Vortices and polynomials},
   JOURNAL = {Stud. Appl. Math.},
  FJOURNAL = {Studies in Applied Mathematics},
    VOLUME = {123},
      YEAR = {2009},
    NUMBER = {1},
     PAGES = {37--62},
      ISSN = {0022-2526,1467-9590},
   MRCLASS = {37J35 (30C10 34M55 35Q53 35Q55 37N10 76D17)},
  MRNUMBER = {2538285},
MRREVIEWER = {Andrei\ A.\ Kapaev},
       DOI = {10.1111/j.1467-9590.2009.00446.x},
       URL = {https://doi.org/10.1111/j.1467-9590.2009.00446.x},
}

@article {MR1975781,
    AUTHOR = {Clarkson, Peter A. and Mansfield, Elizabeth L.},
     TITLE = {The second {P}ainlev\'{e} equation, its hierarchy and
              associated special polynomials},
   JOURNAL = {Nonlinearity},
  FJOURNAL = {Nonlinearity},
    VOLUME = {16},
      YEAR = {2003},
    NUMBER = {3},
     PAGES = {R1--R26},
      ISSN = {0951-7715,1361-6544},
   MRCLASS = {34M55},
  MRNUMBER = {1975781},
MRREVIEWER = {Andrei\ A.\ Kapaev},
       DOI = {10.1088/0951-7715/16/3/201},
       URL = {https://doi.org/10.1088/0951-7715/16/3/201},
}

@article {MR2291140,
    AUTHOR = {Clarkson, Peter A.},
     TITLE = {Special polynomials associated with rational solutions of the
              {P}ainlev\'{e} equations and applications to soliton
              equations},
   JOURNAL = {Comput. Methods Funct. Theory},
  FJOURNAL = {Computational Methods and Function Theory},
    VOLUME = {6},
      YEAR = {2006},
    NUMBER = {2},
     PAGES = {329--401},
      ISSN = {1617-9447},
   MRCLASS = {33E17 (34M35 35Q53 37K15 37K20)},
  MRNUMBER = {2291140},
MRREVIEWER = {Wadim\ Zudilin},
       DOI = {10.1007/BF03321618},
       URL = {https://doi.org/10.1007/BF03321618},
}

@article {MR3494152,
    AUTHOR = {Clarkson, Peter A. and Jordaan, Kerstin and Kelil, Abey},
     TITLE = {A generalized {F}reud weight},
   JOURNAL = {Stud. Appl. Math.},
  FJOURNAL = {Studies in Applied Mathematics},
    VOLUME = {136},
      YEAR = {2016},
    NUMBER = {3},
     PAGES = {288--320},
      ISSN = {0022-2526,1467-9590},
   MRCLASS = {42C05 (33C47)},
  MRNUMBER = {3494152},
       DOI = {10.1111/sapm.12105},
       URL = {https://doi.org/10.1111/sapm.12105},
}

@article {MR0581468,
    AUTHOR = {Okamoto, Kazuo},
     TITLE = {Polynomial {H}amiltonians associated with {P}ainlev\'{e}
              equations. {I}},
   JOURNAL = {Proc. Japan Acad. Ser. A Math. Sci.},
  FJOURNAL = {Japan Academy. Proceedings. Series A. Mathematical Sciences},
    VOLUME = {56},
      YEAR = {1980},
    NUMBER = {6},
     PAGES = {264--268},
      ISSN = {0386-2194},
   MRCLASS = {34A34 (58A17 58F05)},
  MRNUMBER = {581468},
MRREVIEWER = {P.\ A.\ Kuchment},
       URL = {http://projecteuclid.org/euclid.pja/1195516808},
}

@article {MR1251164,
    AUTHOR = {Veselov, A. P. and Shabat, A. B.},
     TITLE = {A dressing chain and the spectral theory of the
              {S}chr\"{o}dinger operator},
   JOURNAL = {Funktsional. Anal. i Prilozhen.},
  FJOURNAL = {Funktsional\cprime ny\u{\i} Analiz i ego Prilozheniya},
    VOLUME = {27},
      YEAR = {1993},
    NUMBER = {2},
     PAGES = {1--21, 96},
      ISSN = {0374-1990,2305-2899},
   MRCLASS = {58F19 (34L40 35Q55 58F07)},
  MRNUMBER = {1251164},
MRREVIEWER = {Vladimir\ Mikhalev},
       DOI = {10.1007/BF01085979},
       URL = {https://doi.org/10.1007/BF01085979},
}

@article {BV,
    AUTHOR = {Boelen, Lies and Van Assche, Walter},
     TITLE = {Discrete {P}ainlev\'{e} equations for recurrence coefficients
              of semiclassical {L}aguerre polynomials},
   JOURNAL = {Proc. Amer. Math. Soc.},
  FJOURNAL = {Proceedings of the American Mathematical Society},
    VOLUME = {138},
      YEAR = {2010},
    NUMBER = {4},
     PAGES = {1317--1331},
      ISSN = {0002-9939,1088-6826},
   MRCLASS = {33C45 (39A45 42C05)},
  MRNUMBER = {2578525},
       DOI = {10.1090/S0002-9939-09-10152-1},
       URL = {https://doi.org/10.1090/S0002-9939-09-10152-1},
}

@article {FVZ,
    AUTHOR = {Filipuk, Galina and Van Assche, Walter and Zhang, Lun},
     TITLE = {The recurrence coefficients of semi-classical {L}aguerre
              polynomials and the fourth {P}ainlev\'{e} equation},
   JOURNAL = {J. Phys. A},
  FJOURNAL = {Journal of Physics. A. Mathematical and Theoretical},
    VOLUME = {45},
      YEAR = {2012},
    NUMBER = {20},
     PAGES = {205201, 13},
      ISSN = {1751-8113,1751-8121},
   MRCLASS = {33C45 (34M55 42C05)},
  MRNUMBER = {2924846},
MRREVIEWER = {Hamza\ Chaggara},
       DOI = {10.1088/1751-8113/45/20/205201},
       URL = {https://doi.org/10.1088/1751-8113/45/20/205201},
}

@article {BD,
    AUTHOR = {Bleher, Pavel M. and Dea\~{n}o, Alfredo},
     TITLE = {Topological expansion in the cubic random matrix model},
   JOURNAL = {Int. Math. Res. Not. IMRN},
  FJOURNAL = {International Mathematics Research Notices. IMRN},
      YEAR = {2013},
    NUMBER = {12},
     PAGES = {2699--2755},
      ISSN = {1073-7928,1687-0247},
   MRCLASS = {60B20 (31A15 42C05)},
  MRNUMBER = {3071662},
MRREVIEWER = {Mattia\ Cafasso},
       DOI = {10.1093/imrn/rns126},
       URL = {https://doi.org/10.1093/imrn/rns126},
}

@article {MR4046831,
    AUTHOR = {Bothner, Thomas and Miller, Peter D.},
     TITLE = {Rational solutions of the {P}ainlev\'{e}-{III} equation: large
              parameter asymptotics},
   JOURNAL = {Constr. Approx.},
  FJOURNAL = {Constructive Approximation. An International Journal for
              Approximations and Expansions},
    VOLUME = {51},
      YEAR = {2020},
    NUMBER = {1},
     PAGES = {123--224},
      ISSN = {0176-4276,1432-0940},
   MRCLASS = {34M55 (33E17 34E05)},
  MRNUMBER = {4046831},
MRREVIEWER = {Yousuke\ Ohyama},
       DOI = {10.1007/s00365-019-09463-4},
       URL = {https://doi.org/10.1007/s00365-019-09463-4},
}

@article {MR3879971,
    AUTHOR = {Bothner, Thomas and Miller, Peter D. and Sheng, Yue},
     TITLE = {Rational solutions of the {P}ainlev\'{e}-{III} equation},
   JOURNAL = {Stud. Appl. Math.},
  FJOURNAL = {Studies in Applied Mathematics},
    VOLUME = {141},
      YEAR = {2018},
    NUMBER = {4},
     PAGES = {626--679},
      ISSN = {0022-2526,1467-9590},
   MRCLASS = {34M55 (34M35)},
  MRNUMBER = {3879971},
MRREVIEWER = {Federico\ Zullo},
       DOI = {10.1111/sapm.12220},
       URL = {https://doi.org/10.1111/sapm.12220},
}

@article {MR4153120,
    AUTHOR = {Buckingham, Robert},
     TITLE = {Large-degree asymptotics of rational {P}ainlev\'{e}-{IV}
              functions associated to generalized {H}ermite polynomials},
   JOURNAL = {Int. Math. Res. Not. IMRN},
  FJOURNAL = {International Mathematics Research Notices. IMRN},
      YEAR = {2020},
    NUMBER = {18},
     PAGES = {5534--5577},
      ISSN = {1073-7928,1687-0247},
   MRCLASS = {33E17 (33C45 34M55 37K65)},
  MRNUMBER = {4153120},
MRREVIEWER = {Yousuke\ Ohyama},
       DOI = {10.1093/imrn/rny172},
       URL = {https://doi.org/10.1093/imrn/rny172},
}

@article {MR3350600,
    AUTHOR = {Buckingham, Robert J. and Miller, Peter D.},
     TITLE = {Large-degree asymptotics of rational {P}ainlev\'{e}-{II}
              functions: critical behaviour},
   JOURNAL = {Nonlinearity},
  FJOURNAL = {Nonlinearity},
    VOLUME = {28},
      YEAR = {2015},
    NUMBER = {6},
     PAGES = {1539--1596},
      ISSN = {0951-7715,1361-6544},
   MRCLASS = {34M55 (33E17 35Q15)},
  MRNUMBER = {3350600},
MRREVIEWER = {Shun\ Shimomura},
       DOI = {10.1088/0951-7715/28/6/1539},
       URL = {https://doi.org/10.1088/0951-7715/28/6/1539},
}

@article {MR3265723,
    AUTHOR = {Buckingham, Robert J. and Miller, Peter D.},
     TITLE = {Large-degree asymptotics of rational {P}ainlev\'{e}-{II}
              functions: noncritical behaviour},
   JOURNAL = {Nonlinearity},
  FJOURNAL = {Nonlinearity},
    VOLUME = {27},
      YEAR = {2014},
    NUMBER = {10},
     PAGES = {2489--2578},
      ISSN = {0951-7715,1361-6544},
   MRCLASS = {34M55 (33E17 34M30 34M50 35Q15)},
  MRNUMBER = {3265723},
MRREVIEWER = {Lun\ Zhang},
       DOI = {10.1088/0951-7715/27/10/2489},
       URL = {https://doi.org/10.1088/0951-7715/27/10/2489},
}

@unpublished{BBDY,
	author = {Barhoumi, Ahmad and Bleher, Pavel and Dea\~{n}o, Alfredo and Yattselev, Maxim},
	note= {To appear in: Orthogonal Polynomials, Special Functions and Applications - Proceedings of the 16th International Symposium, Montreal, Canada, In honor to Richard Askey. Arxiv: \href{https://arxiv.org/abs/2310.14898}{2310.14898} },
	title = {On {A}iry solutions of {PII} and complex cubic ensemble of random matrices, {I}}, year = {2023}, }

@unpublished{BBDY2,
	author = {Barhoumi, Ahmad and Bleher, Pavel and Dea\~{n}o, Alfredo and Yattselev, Maxim},
	note= {To appear in: Contemp. Math. Arxiv: \href{https://arxiv.org/abs/2403.03023}{2403.03023} },
	title = {On {A}iry solutions of {PII} and complex cubic ensemble of random matrices, {II}}}

@article {DK,
    AUTHOR = {Duits, M. and Kuijlaars, A. B. J.},
     TITLE = {Painlev\'{e} {I} asymptotics for orthogonal polynomials with
              respect to a varying quartic weight},
   JOURNAL = {Nonlinearity},
  FJOURNAL = {Nonlinearity},
    VOLUME = {19},
      YEAR = {2006},
    NUMBER = {10},
     PAGES = {2211--2245},
      ISSN = {0951-7715,1361-6544},
   MRCLASS = {34M55 (30E25 33C50 33E17 41A60 42C05)},
  MRNUMBER = {2260263},
MRREVIEWER = {Andrew\ Pickering},
       DOI = {10.1088/0951-7715/19/10/001},
       URL = {https://doi.org/10.1088/0951-7715/19/10/001},
}

@article {BD2,
    AUTHOR = {Bleher, Pavel and Dea\~{n}o, Alfredo},
     TITLE = {Painlev\'{e} {I} double scaling limit in the cubic random
              matrix model},
   JOURNAL = {Random Matrices Theory Appl.},
  FJOURNAL = {Random Matrices. Theory and Applications},
    VOLUME = {5},
      YEAR = {2016},
    NUMBER = {2},
     PAGES = {1650004, 58},
      ISSN = {2010-3263,2010-3271},
   MRCLASS = {60B20 (05C10 30E15 30E25 33E17)},
  MRNUMBER = {3493550},
       DOI = {10.1142/S2010326316500040},
       URL = {https://doi.org/10.1142/S2010326316500040},
}

\end{document}